\documentclass[11pt,letterpaper]{article}
%
%
\usepackage{amsthm,amsmath,amssymb}
\usepackage[style=alphabetic, maxbibnames=15, maxcitenames=15, natbib=true,
  maxalphanames=10, backend=biber, sorting=nty, backref=true]{biblatex}
\addbibresource{papers.bib}
\DefineBibliographyStrings{english}{backrefpage = {page},backrefpages = {pages},}

\usepackage{graphicx,xcolor}
\usepackage{nicefrac}
\usepackage[normalem]{ulem}
\usepackage{setspace}
\usepackage{accents}
\usepackage[utf8]{inputenc}
\usepackage{longtable}
\usepackage{multirow}
\usepackage{subcaption}
\usepackage{url}
\usepackage{algorithm,algorithmic}
\usepackage[utf8]{inputenc} 
\usepackage[T1]{fontenc}    
\usepackage{hyperref}       
\usepackage{url}            
\usepackage{booktabs}       
\usepackage{pifont}
\usepackage{amsfonts}       
\usepackage{nicefrac}       
\usepackage{microtype}  
\usepackage{wrapfig}


\DeclareMathOperator*{\argmin}{\arg\!\min}
\DeclareMathOperator*{\argmax}{\arg\!\max}

\usepackage[margin=1in]{geometry}
\def\grad{\nabla}

\def\ba{\mathbf{a}}
\def\bb{\mathbf{b}}

\def\bp{\mathbf{p}}

\def\br{\mathbf{r}}

\def\bv{\mathbf{v}}

\def\bx{\mathbf{x}}  
\def\by{\mathbf{y}}
\def\bz{\mathbf{z}}

\def\bC{\mathbf{C}}
\def\bD{\mathbf{D}}
\def\bE{\mathbf{E}}
\def\bF{\mathbf{F}}

\def\bI{\mathbf{I}}

\def\bL{\mathbf{L}}

\def\bT{\mathbf{T}}

\def\cA{\mathcal{A}}

\def\cE{\mathcal{E}}
\def\cF{\mathcal{F}}
\def\cG{\mathcal{G}}

\def\cK{\mathcal{K}}
\def\cL{\mathcal{L}}
\def\cM{\mathcal{M}}

\def\cO{\mathcal{O}}
\def\cP{\mathcal{P}}

\def\cX{\mathcal{X}}
\def\cY{\mathcal{Y}}
\def\cZ{\mathcal{Z}}

\def\mE{\mathbb{E}}

\def\smskip{\smallskip}

\def\texitem#1{\par\smskip\noindent\hangindent 25pt
               \hbox to 25pt {\hss #1 ~}\ignorespaces}

\def\abs#1{\left|#1\right|}
\def\norm#1{\left\|#1\right\|}

\newcommand{\BEAS}{\begin{eqnarray*}}
\newcommand{\EEAS}{\end{eqnarray*}}
\newcommand{\BEA}{\begin{eqnarray}}
\newcommand{\EEA}{\end{eqnarray}}
\newcommand{\BEQ}{\begin{eqnarray}}
\newcommand{\EEQ}{\end{eqnarray}}
\newcommand{\BIT}{\begin{itemize}}
\newcommand{\EIT}{\end{itemize}}
\newcommand{\BNUM}{\begin{enumerate}}
\newcommand{\ENUM}{\end{enumerate}}

\newcommand{\BA}{\begin{array}}
\newcommand{\EA}{\end{array}}


\newcommand{\ones}{\mathbf 1}

\newcommand{\reals}{\mathbb{R}}
\newcommand{\integers}{\mathbb{Z}}




\newcommand{\diag}{\mathop{\bf diag}}




\newcommand{\dom}{\mathop{\bf dom}}






\newif\ifpagenumbering
\pagenumberingtrue

\pagenumberingfalse

%
%
\newsavebox{\theorembox}
\newsavebox{\lemmabox}
\newsavebox{\defnbox}
\newsavebox{\corollarybox}
\newsavebox{\remarkbox}
\newsavebox{\assbox}
\savebox{\theorembox}{\noindent\bf Theorem}
\savebox{\lemmabox}{\noindent\bf Lemma}
\savebox{\defnbox}{\noindent\bf Definition}
\savebox{\corollarybox}{\noindent\bf Corollary}
\savebox{\remarkbox}{\noindent\bf Remark}
\newtheorem{remark}{\usebox{\remarkbox}}[section]
\newtheorem{defn}{\usebox{\defnbox}}






\def\fprod#1{\left\langle#1\right\rangle}

\def\ind#1{\mathbb{I}_{#1}}

\def\id{\mathbf{I}}

\def\fM{\mathfrak{M}}
\def\bdelta{\boldsymbol{\delta}}
\def\sa#1{\textcolor{black}{#1}}

\def\rev#1{\textcolor{black}{#1}}
\def\eh#1{\textcolor{black}{#1}}
\def\btx{\tilde{\bx}}

\def\eyy#1{{#1}}
\def\na#1{{#1}}

\def\eyh#1{{#1}}

\usepackage{todonotes}

\makeatletter
\def\munderbar#1{\underline{\sbox\tw@{$#1$}\dp\tw@\z@\box\tw@}}
\makeatother

\newtheorem{theorem}{Theorem}
\newtheorem{result}{Main Result}

\newtheorem{lemma}{Lemma}

\newtheorem{assumption}{Assumption}

\usepackage{hyperref}
\hypersetup{colorlinks,citecolor=blue,linktocpage,breaklinks=true}

\setlength{\parskip}{0.5em}
\setlength{\parindent}{0em}


\begin{document}

%

%

\title{Randomized Primal-Dual Methods with 
Adaptive Step Sizes}

\author{ Erfan Yazdandoost Hamedani\thanks{Department of Systems and Industrial Engineering, The University of Arizona, Tucson, AZ, USA \{erfany@arizona.edu, afrooz@arizona.edu\}}
\quad Afrooz Jalilzadeh$^*$
\quad Necdet Serhat Aybat \thanks{Department of Industrial and Manufacturing Engineering, The Pennsylvania State University, University Park, PA, USA \{nsa10@psu.edu\}. Research of N. S. Aybat was partially supported by ONR grant N00014-21-1-2271.}
\vspace{5mm} }
\date{}
\maketitle

\begin{abstract}
In this paper we propose a class of randomized primal-dual methods incorporating line search to contend with large-scale saddle point~(SP) problems defined by a convex-concave function $\cL(\bx,y)\triangleq \na{\sum_{i=1}^M}f_i(x_i)+\Phi(\bx,y)-h(y)$. We analyze the convergence rate of the proposed method under mere convexity and strong convexity assumptions of $\cL$ in $\bx$-variable. In particular, assuming $\grad_y\Phi(\cdot,\cdot)$ is Lipschitz and $\grad_\bx\Phi(\cdot,y)$ is coordinate-wise Lipschitz for any fixed $y$, the ergodic sequence generated by the algorithm achieves the $\cO(M/k)$ convergence rate in the expected primal-dual gap.
Furthermore, assuming that $\cL(\cdot,y)$ is strongly convex for any $y$, and that $\Phi(\bx,\cdot)$ is affine for any $\bx$, the scheme enjoys a faster rate of $\cO(M/k^2)$ in terms of primal solution suboptimality. We implemented the proposed algorithmic framework to solve kernel matrix learning problem, and tested it against other state-of-the-art first-order methods.
\end{abstract}
\section{Introduction}
\label{sec:intro}
\na{Let $(\cX_i,\norm{\cdot}_{\cX_i})$ for $i\in\cM\triangleq \{1,2,\hdots,M\}$ and $(\cY,\norm{\cdot}_{\cY})$ be finite dimensional, normed vector spaces such that $\cX_i=\reals^{m_i}$ for $i\in\cM$.} 
Let $\bx=[x_i]_{i\in\cM}\in\Pi_{i\in\cM}\cX_i\triangleq \cX=\reals^m$ where {$m\triangleq \sum_{i\in\cM}m_i$}. In this paper, we study the following 
saddle point~(SP) problem:
\begin{equation}\label{eq:original-problem}
\begin{aligned}
(P):\quad &\min_{\bx\in\cX}\max_{y\in\cY}~ \cL(\bx,y),\\
&{\cL(\bx,y)\triangleq} \sum_{i\in\cM}f_i(x_i)+\Phi(\bx,y)-h(y),
\end{aligned}
\end{equation}
where {$h:\cY\rightarrow \reals\cup\{+\infty\}$ and
$f_i:\cX_i\rightarrow \reals\cup\{+\infty\}$ for all $i\in\cM$ are (possibly nonsmooth) closed \rev{$\mu_i$-convex functions with respect to $\norm{\cdot}_{\cX_i}$ for some $\mu_i\geq 0$,}
 and the coupling function} $\Phi:\cX\times\cY\rightarrow \reals$ is 
convex in $\bx$ and concave in $y$ \rev{and, it} satisfies certain differentiability assumptions -- see Assumption~\ref{assum}.

{Our study is motivated by large-scale problems with a \emph{coordinate-friendly} structure~\cite{peng2016coordinate}, i.e., {for any $i\in\cM$,} the 
{amount of work} 
to compute the partial-gradient $\grad_{x_i}\Phi(\bx,y)$ 
{is} 
${m_i/m\approx} 1/M$ fraction of the {work required for the} full-gradient $\grad_\bx\Phi(\bx,y)$ computation.} {Our objective is to design an efficient first-order \emph{randomized} block-coordinate primal-dual method to compute a saddle point of the structured convex-concave function $\cL$ in~\eqref{eq:original-problem}, and to investigate its convergence properties under \rev{\emph{mere}} and \rev{\emph{strong convexity}} settings.} \eyh{Typically, the first-order methods rely on the knowledge of \rev{global} Lipschitz constants 
to select an appropriate step-size 
{with} a convergence guarantee. In practical settings, such {constants may not be readily available or it can be difficult to compute them}; hence, one may need to 
{consider} line-search methods. 
\rev{Since} \emph{exact} line-search methods are often difficult to implement, \rev{one practical avenue is to adopt 
backtracking 
to estimate} the \emph{local} Lipschitz {constants}, {which usually leads to larger steps}. Hence, in this paper, we propose a randomized
block-coordinate primal-dual algorithm with \rev{\emph{backtracking}} to efficiently solve {the} SP problem {in}~\eqref{eq:original-problem}.} 

{Before we 
discuss
important applications, it should be emphasized that \eqref{eq:original-problem} covers
regularized convex optimization problems with nonlinear constraints as a special case\footnote{{We can show that Assumption~\ref{assum} is satisfied for this example through arguing that the dual iterate sequence of the proposed method is almost surely bounded. Thus, one does not need the boundedness of dual domain to argue for the existence of global constants $\{L_{x_i x_i}\}_{i\in\cM}$, instead bounded dual sequence is sufficient for our proof arguments.}}, i.e.,}
\begin{align}\label{eq:constrained-problem}
\min_{\bx}
\rho(\bx)\triangleq \varphi(\bx)+\sum_{i\in\cM}f_i(x_i) \ \ \hbox{s.t.}\ \ g(\bx)\in -\cK,
\end{align}
where {$\cK\subseteq\cY^*$} is a closed convex cone \na{in the dual space $\cY^*$}; $f_i:\cX_i\rightarrow\reals\cup\{+\infty\}$ is a convex (possibly nonsmooth) regularizer function for $i\in\cM$; $\varphi:\cX\rightarrow\reals$ is a smooth convex function having a coordinate-wise Lipschitz continuous gradient; 
and $g:\cX\rightarrow\cY^*$ is a smooth $\cK$-convex, \na{coordinate-wise Lipschitz function {with} a coordinate-wise Lipschitz Jacobian.} 
This problem can be written as a special case of \eqref{eq:original-problem} by setting $\Phi(\bx,y)=\varphi(\bx)+\fprod{g(\bx),y}$ and $h(y)=\ind{\cK^*}(y)$, where {$\cK^*\subseteq\cY$} denotes the dual cone of $\cK$ {and $\ind{\cK^*}(\cdot)$ is the indicator function of $\cK^*$.}
{An advantage of \sa{primal-dual methods to solve such formulation, over primal methods that preserve feasibility at every iteration,} lies in utilizing the corresponding dual variables in order to boost the primal convergence 
{through appropriately} controlling the constraint violations.}

{\bf Application.} Many interesting problems arising in 
machine learning~(ML), 
{signal 
and image processing,} finance, etc., can be formulated as a special case of \eqref{eq:original-problem}. 
Some instances of such problems include: i) \emph{distributionally robust optimization} \cite{namkoong2016stochastic}; ii) \emph{kernel matrix learning} \cite{lanckriet2004learning,gonen2011multiple}; iii) \emph{distance metric learning} \cite{xing2003distance}; (iv) training \emph{ellipsoidal machines}~\cite{shivaswamy2007ellipsoidal}; (v) \emph{two-player zero-sum game} with nonlinear payoff~\citep{boyd2004convex,chen2017accelerated}.
In the following, we will briefly discuss some of these problem instances and their formulations as a special case of \eqref{eq:original-problem}. 

{\bf Distributionally robust optimization (DRO):} Let $(\Omega,\cF,\mathbb{P})$ be a probability space where $\Omega=\{\zeta_1,\hdots, \zeta_m\}$, $\ell:X\times \Omega\to \reals$ is a convex loss function {over a convex bounded set $X\subset \cX$}, and we define $\ell_i(u)\triangleq \ell(u;\zeta_i)$. {The aim of DRO is to optimize the worst} case performance under uncertainty and to compute solutions with some 
confidence level~\cite{namkoong2016stochastic}. {This class of problems can be formulated as}
\begin{align}\label{eq:DRO}
    \min_{u\in X}\max_{\bp\in\cP}~\mE_{\zeta\sim \mathbb{P}}[\ell(u;\zeta)]=\sum_{i=1}^m p_i\ell_i(u) ,
\end{align}
where $\cP$ represents {an uncertainty set over the probability distributions.}
For instance, $\cP=\{\by\in\Delta_m : V(\bp,\frac{1}{m}\ones_m)\leq \rho\}$ is an uncertainty set considered in 
\cite{namkoong2016stochastic}, where 
{$\Delta_m$} is an $m$-dimensional {probability} simplex, and $V(Q,P)$ denotes a divergence measure for 
probability measures $Q$ and $P$. {Assuming $V(\bp,\frac{1}{m}\ones_m)=\sum_{i=1}^mV_i(p_i,\frac{1}{m})$} and introducing variables $\lambda\in\reals_+$ and $\eta\in\reals$, we can 
{dualize} the divergence constraint in \eqref{eq:DRO} to obtain the following equivalent problem: 
\begin{align*}
    \min_{\substack{u\in X\\ \lambda\geq 0\\ \eta\in\reals}}\max_{\substack{\bp\in\reals^m_+}}\sum_{i=1}^m {p_i\ell_i(u) - \tfrac{\lambda}{m} \big(V_i(p_i,\tfrac{1}{m})- \tfrac{\rho}{m}\big)+\eta(p_i-\tfrac{1}{m})} .
\end{align*}
Let $y=[u^\top~\lambda~\ \eta]^\top$ and $\bx=\bp$, then multiplying the above problem by -1 we can reformulate the problem as \eqref{eq:original-problem} by defining 
$f_i(x_i)=\ind{\reals_+}(x_i)$ and $h(y)=\ind{X\times\reals_+\times\reals}(y)$ \rev{as indicator functions}.

{\bf Learning a kernel matrix:} Suppose we are given a {training set} 
consisting of feature vectors $\{\ba_i\}_{i=1}^m\subset\reals^n$, and the corresponding labels $\{b_i\}_{i=1}^m\subset\{-1,+1\}$. Consider {$q\in\integers_+$} different embedding of the data and let $K_i\in\mathbb{S}^m_{+}$ be the corresponding kernel matrix {for $i=1,\ldots,q$}. The objective is to learn a kernel matrix $K$ belonging {to a class of kernel matrices $\cK\subset \mathbb{S}^m_{+}$ such that it minimizes} the training error of an {$\ell_2$-norm soft-margin nonlinear} SVM {over $K\in\cK$} -- see~\cite{lanckriet2004learning} for more details. {In this setting, it is assumed that the class $\cK$ is described as a convex set} generated by \eyy{$\{K_i\}_{i=1}^q$,} {e.g.,} 
\begin{equation}
\label{eq:Kernel_Class}
\cK\triangleq \big\{\sum_{i=1}^q y_i K_i:\ y_i\geq 0,\ i=1,\ldots, q\big\}{\subset\mathbb{S}^m_{+}}.
\end{equation}
{Then, 
learning over the class $\cK$ in~\eqref{eq:Kernel_Class} is formulated as} 
\begin{align}
\label{eq:kernel_learn_simple}
\min_{\substack{y\in\reals^q_+:\\ \fprod{\br,y}=c,}} \max_{\substack{\bx:\ 0\leq \bx\leq C\ones_m, \\ \ \fprod{\bb,\bx}=0}} 2\bx^\top\ones_m - \sum_{i=1}^q {y_i}\bx^\top H(K_i)\bx-\lambda\norm{\bx}_2^2,
\end{align}
where $c,C{>}0$ and $\lambda\geq 0$  are model parameters, $y=[y_i]_{i=1}^q$, $\br=[\text{trace}(K_i)]_{i=1}^q$, $\bb=[b_i]_{i=1}^m$ and $H(K_i)\triangleq \diag(\bb)K_i\diag(\bb)$. Multiplying the objective function by -1, this problem can be formulated as a special case of \eqref{eq:original-problem}.

{Problems in the aforementioned applications are typically \rev{large-scale, and} standard primal-dual methods do not scale well with the problem dimension and their iterations are memory expensive; therefore, {in terms of the efficiency of work required per-iteration,} the advantages of randomized block-coordinate schemes will be evident as problem dimension increases.}  
\begin{table*}[htb]
\small
\centering
{\small
\caption{Comparison of different methods in Merely Convex (MC) and Strongly Convex (SC) settings. In convergence rates, $k$ denotes the iteration counter. 
The work-per-iteration for \rev{\cite{juditsky2011first,malitsky2018proximal,hamedani2021primal} is $\cO(M)$} while it is $\cO(1)$ for the others.}
\label{sc_table}}
\begin{tabular}{|c|c|c|c|c|c|} \hline
	  &  \multicolumn{3}{c|}{\textbf{Properties}} &\multicolumn{2}{c|}{\textbf{ Iteration complexity}} \\ \hline
 \textbf{Paper}& \textbf{Non-bilinear $\Phi$}  & \textbf{Random blocks}& \textbf{Line search} & \textbf{C-C}& \textbf{SC-C}
\\ \hline\hline
\cite{chambolle2017stochastic}& \ding{55}&\checkmark&\ding{55}&$\mathcal O(M/k)$& $\mathcal O(M/k^2)$ \\ \hline
\cite{xu2021first}&\ding{55}&\checkmark&\checkmark&$\mathcal O(M/(M+k))$& $-$\\ \hline
\cite{dang2014randomized}&\ding{55}&\checkmark&\ding{55}&$\mathcal O(M/k)$& $\mathcal O(M/k^2)$ \\ \hline
\cite{tran2020new}&\ding{55}&\checkmark&\ding{55}&$\mathcal O(M/k)$& $\mathcal O(M^2/k^2)$\\ \hline
\cite{alacaoglu2022convergence}&\ding{55}&\checkmark&\ding{55}&$\mathcal O(M/k)$& $-$\\ \hline
\cite{alacaoglu2022complexity} &\ding{55}&\checkmark&\ding{55}&$\mathcal O(M/k)$& $-$ \\
\hline
\cite{juditsky2011first}&\checkmark&\ding{55}&\ding{55}&$\mathcal O(1/k)$ & $\mathcal O(1/k^2)$\\ \hline
\cite{malitsky2018proximal}&\checkmark&\ding{55}&\checkmark&$\mathcal O(1/k)$ & $-$
\\ \hline
\cite{hamedani2021primal}&\checkmark&\ding{55}&\checkmark&$\mathcal O(1/k)$ & $\mathcal O(1/k^2)$\\ \hline\hline
{\bf This paper}&\checkmark&\checkmark&\checkmark&$\mathcal O(M/k)$ & $\mathcal O(M/k^2)$\\ \hline
\end{tabular}
\end{table*}
\subsection{Related Work}
\eyh{Saddle point problems have received a 
{significant} attention recently due to their vast applicability and {modeling} flexibility. Here, we briefly review some recent work that is closely related to ours {--see also Table \ref{sc_table} for a detailed comparison}.}

{\bf Bilinear SP:} There have been several work proposing efficient algorithms to solve convex-concave SP problems with {a \emph{bilinear} 
coupling} function, i.e., $\Phi(x,y)=\fprod{Ax,y}$ for some linear map $A:\cX\to\cY^*$, 
{e.g.,}~\cite{chambolle2011first,chen2014optimal,chambolle2016ergodic,he2016accelerated,li2021new,alacaoglu2022convergence,alacaoglu2022complexity}. \cite{chambolle2016ergodic} considered an SP problem with a composite 
structure and a convergence rate of $\cO(1/K)$ for convex-concave and $\cO(1/K^2)$ for strongly convex-{affine} 
{settings} are shown. Later \cite{malitsky2018first} proposed a primal-dual method with linesearch with the same rate results as {in}~\cite{chambolle2016ergodic}. 

{\bf Non-bilinear SP:} There has been a vast body of research, e.g., \cite{juditsky2011first,he2015mirror,kolossoski2017accelerated,malitsky2018proximal,malitsky2020forward,hamedani2021primal,zhang2021robust,zhang2022sapd+}, studying SP problems with non-bilinear 
{coupling} functions. Indeed, non-bilinear SP problems can be viewed as a special case of Variational Inequality~(VI) {problems}. In 
{an important work by}~\cite{nemirovski2004prox}, a prox-type extra-gradient based method (known as \texttt{Mirror-prox}) is proposed. Assuming that the monotone operator is $L$-Lipschitz continuous and the constraint set is compact, 
it is shown that the ergodic iterate sequence converges with $\cO(L/K)$ rate --also see 
\cite{he2015mirror} for extension of \texttt{Mirror-prox} to SP problems with a composite structure. 
Later \texttt{Mirror-prox} has been extended to exploit SP problems for strongly convex-
{affine} setting; in particular, a \emph{multi-stage} method that repeatedly calls \texttt{Mirror-prox} is proposed by \cite{juditsky2011first}, and $\cO(1/K^2)$ rate is shown 
{for the strongly convex-affine setting when $\cY$ is a \emph{compact} set.} 
Later, \cite{malitsky2018proximal} also considered a monotone VI problem {involving} a non-smooth function with {an} easy-to-compute proximal map. The author proposed a proximal
extrapolated gradient method, {\texttt{PEGM}}, with {an} ergodic convergence rate of $\cO(1/K)$. The proposed method enjoys a backtracking scheme to estimate the local Lipschitz
{constants} of the monotone map --for a backtracking line-search method {tailored to} SP problems, see 
\cite{hamedani2021primal} and for a more general setting of monotone inclusion problems, see 
\cite{malitsky2020forward}.

{\bf Block coordinate:} {As we indicated earlier, none of these methods mentioned above exploits the block-coordinate structure of \eqref{eq:original-problem}. However,} in a large-scale setting, the computation of full-gradient and/or prox operator might be prohibitively expensive; hence, presenting a strong motivation for using the partial-gradient and/or separable structure of the problem at each iteration of the algorithm. Therefore, the computation may be broken into smaller pieces; thereby, inducing tractability {per iteration, at the cost of possibly slower convergence in terms of 
overall iteration complexity.}
There has been a 
vast body of work on randomized block-coordinate descent schemes for primal optimization problems by \cite{nesterov2012efficiency,luo1992convergence,xu2013block,richtarik2014iteration,jalilzadeh2018optimal}; but, there are far fewer studies on randomization of block coordinates for SP algorithms. 
For \sa{some papers motivated 
by the} regularized empirical risk minimization (ERM) of linear predictors arising in 
{ML}, see \cite{zhu2015adaptive,yu2015doubly,zhang2017stochastic,chambolle2017stochastic}.

Furthermore, {there are some related recent work} by \cite{zhong2014fast,gao2016randomized}
on block-coordinate ADMM-type algorithms to solve convex optimization problem with linear constraints. 
Assuming coordinate-wise Lipschitz differentiability of $g$ and $q$, 
$\cO(1/(1+\gamma k))$ convergence rate is shown under mere convexity assumption, where {$\gamma=\frac{M'}{M}=\frac{N'}{N}$ and $M'$ ($N'$)} denotes the number of $x_i$ ($y_i$) coordinates updated at each iteration.

Majority of the previous work on block-coordinate algorithms for SP problems require a bilinear coupling term in the problem formulation~\citep{dang2014randomized,fercoq2015coordinate,valkonen2016block}. {However, to the best of our knowledge, none of the existing methods can handle the framework discussed in this paper -- the closest one to ours is \cite{xu2021first} which can exploit the block-coordinate structure and does not assume the knowledge of Lipschitz constants via employing line-search; though, it is for constrained optimization problems, not for more general SP problems considered in this paper.} \sa{More precisely, \cite{xu2021first} 
considered} a convex minimization problem with functional constraints, $\min\left\{ f(\bx)+g(\bx)~:~A\bx=b,~G(\bx)\leq 0\right\}$, where $g$ and component functions of $G$ are Lipschitz differentiable convex functions, and $f$ is a proper closed convex function (possibly nonsmooth). 
When the function $f(\bx)$ has a separable structure, $\sum_{i\in\cM}f_i(x_i)$, a randomized block-coordinate linearized augmented Lagrangian method, \texttt{BLALM}, with a convergence rate of $\cO(1/(1+\frac{k}{M}))$ is proposed. 
Note \texttt{BLALM} cannot deal with \eqref{eq:original-problem} when $\Phi$ is not linear in $y$. 
\eyh{Finally, in a recent paper by~\cite{tran2020new}, a 
randomized block-coordinate primal-dual algorithm for solving convex composite optimization problem with linear constraints {is proposed}. It is shown that the 
algorithm achieves a last-iterate convergence of $\cO(M/k)$ and $\cO(M^2/k^2)$ for convex and strongly convex objective functions.}
\paragraph{\rev{Notation and Definitions.}} \rev{Throughout the paper $\norm{\cdot}$ denotes the Euclidean norm, i.e., $\norm{\cdot}_2$. Define $\bar{\mu}\triangleq\max_{i\in\cM}\mu_i$ and $\underbar{$\mu$}\triangleq\min_{i\in\cM}\mu_i$. Let $F:\cX\to\reals\cup\{+\infty\}$ such that $F(\bx)\triangleq\sup_{y\in\cY}\cL(\bx,y)$ for $\bx\in\cX$.
Under strongly convex-concave setting, i.e., \rev{$\underbar{$\mu$}>0$}, we assume that $L_{yy}=0$ which implies that $\Phi(\bx,\cdot)$ is affine. In this scenario one can assume that $\Phi(\bx,y)=\fprod{g(\bx),y}$ for some continuously differentiable vector-valued function $g:\cX\to\cY^*$. Therefore, the problem in \eqref{eq:original-problem} can be equivalently represented as $\min_{\bx\in\cX}F(\bx)$, where 
\rev{\begin{align}
\label{eq:primal_problem}
F(\bx)=f(\bx)+h^*(g(\bx)),\quad\forall \bx\in\cX,
\end{align}}%
and $h^*$ denotes the conjugate function of $h$.}

\subsection{\rev{Our Contributions}}
In this paper we studied large-scale SP problems \na{with a general structure:} the coupling function is \emph{neither} bilinear \emph{nor} {separable}. To efficiently handle 
large-scale SP problems, we propose a randomized block-coordinate primal-dual algorithm with 
\sa{backtracking}. The proposed algorithm uses momentum acceleration and is {equipped with Bregman distance functions} that can generalize previous methods such as~\cite{chambolle2017stochastic}. These type of schemes are the method of choice for the SP problems with a \emph{coordinate-friendly} structure so that the computational tasks performed on each block coordinate at each iteration are significantly cheaper compared to full-gradient computations.

\rev{Let $\cG:\cZ\to\reals\cup\{+\infty\}$ denote the primal-dual gap function defined as follows:}
\begin{align}\label{s-gap}
    \rev{\cG(\bar{\bx}, \bar{y})}\triangleq \sup_{{(\bx,y)\in \cX\times\cY}}\{\cL(\bar{\bx},y)-\cL(\bx,\bar{y})\},
\end{align}
where $\cZ=\cX\times \cY$. Whenever a saddle point of \eqref{eq:original-problem} exists,
 under Lipschitz continuity of $\grad_y\Phi(\cdot,\cdot)$ and coordinate-wise Lipschitz continuity of $\grad_\bx\Phi(\cdot,y)$ for any fixed $y$, we prove that the iterate sequence converges to a saddle point $(\bx^*,y^*)$ in a.s. sense, and we also show convergence rate guarantee in terms of \rev{the expected gap $\bE[\cG(\bar{\bx}^k,\bar{y}^k)]$ when $\underbar{$\mu$}=0$, and in the solution error 
 \sa{in terms of} $\bE[\norm{\bx^k-\bx^*}^2]$ and $\bE[F(\bar\bx^K)-F(\bx^*)]$ when $\underbar{$\mu$}>0$, where $\{(\bar{\bx}^k, \bar{y}^k)\}_k$ is \rev{an} ergodic average sequence of the iterates.}
 \begin{result}\label{result1}
{\it 
Suppose global Lipschitz constants are available.
Under convex-concave setting, {for any $\epsilon>0$, $\bz_\epsilon=(\bx_\epsilon,y_\epsilon)$ such that $\bE[\cG(\bz_\epsilon)]\leq \epsilon$ 
can be 
computed} within $\cO(M/\epsilon)$ primal-dual 
{oracle calls}. Moreover, when 
\rev{$\Phi$ is strongly convex in $\bx$ and 
linear in $y$, an $\epsilon$-optimal primal solution $\bx_\epsilon$, i.e., $\bE[\norm{\bx_\epsilon-\bx^*}^2]\leq \epsilon$ and $\bE[F(\bx_\epsilon)-F(\bx^*)]\leq \epsilon$,}  can be obtained within $\cO(\sqrt{M}/\sqrt{\epsilon})$ primal-dual 
oracle calls. {Each call to primal and dual oracles require evaluating $\grad_{x_i}\Phi$ for some $i\in\cM$ and $\grad_y\Phi$, respectively. See Theorem~\ref{thm:main-body} for details.}}
\end{result}
 To the best of our knowledge, our proposed method is the only randomized block-coordinate primal-dual algorithm that can handle general SP problems as in \eqref{eq:original-problem}, and our rate results achieve the 
 lower complexity bounds~\citep{chen2014optimal} for our setting; hence, they are \rev{unimprovable, i.e., \emph{optimal}.}

{Another contribution that 
\sa{immensely increases} the algorithmic applicability is 
the novel 
backtracking linesearch scheme adopted 
within the proposed randomized block-coordinate primal-dual method. The step-size selection for each block is closely related to the coordinatewise Lipschitz constant of \rev{the partial gradient corresponding to that block}; however, in practice, these constants are largely unknown -- one needs to know a constant for each block and the number of \rev{blocks} could be very large. Moreover, even if they can be estimated correctly -- which is not the case in many settings,  \rev{these estimates lead to very conservative step-size selections due to \rev{their} being global constants}. 
Our technique not only alleviates the burden of estimating largely unknown constants; but, also make the convergence much faster in practice as it corresponds to using local Lipschitz constants which leads to larger step-sizes while retaining the theoretical convergence rate guarantees of primal-dual methods using the global constants.}


\begin{result}
{\it Suppose that the global Lipschitz constants are not available. 
The iterates generated by our method with backtracking line search
converges to a saddle point point almost surely with the same oracle complexity as in Main Result  \ref{result1} for both convex and strongly convex settings {up to $\cO(1)$ constants. See Theorem~\ref{thm:backtrack-body} for details.} 
}
\end{result}
\section{Preliminaries}
In this section, we state the main definitions and assumptions that we need for our convergence analysis.  
\begin{defn}
\label{def:bregman}
Let $f(\bx)\triangleq \sum_{i\in\cM}f_i(x_i)$ \na{and $\cM=\{1,\ldots,M\}$}, 
 and define $U_i\in\reals^{{m}\times {m_i}}$ for $i\in\cM$ such that $\bI_m=[U_1,\hdots,{U_M}]$, where $\bI_m$ denotes the $m\times m$ identity matrix. Let
$\varphi_{\cY}:\cY\rightarrow\reals$ be a differentiable \rev{function}
on an open set containing
$\dom h$. 
Suppose 
$\varphi_{\cY}$ 
is 1-strongly convex with respect to
$\norm{\cdot}_{\cY}$. 
Let $\bD_{\cY}:\cY\times\cY\rightarrow\reals_+$ be a Bregman distance function corresponding to $\varphi_{\cY}$,
i.e., $\bD_{\cY}(y,\bar{y})\triangleq \varphi_{\cY}(y)-\varphi_{\cY}(\bar{y})-\fprod{\grad \varphi_{\cY}(\bar{y}),y-\bar{y}}$.
The dual space of $\cY$ is denoted by 
$\cY^*$, and 
$\norm{\cdot}_{\cY^*}:\cY^*\rightarrow\reals$ such that $\norm{y'}_{\cY^*}\triangleq\max\{\fprod{y',y}:\ \norm{y}_{\cY}\leq 1\}$ denotes the dual norm. 
\na{
For each $i\in\cM$, given an arbitrary norm $\norm{\cdot}_{\cX_i}$ on $\cX_i$, \sa{similarly} define $\norm{\cdot}_{\cX_i^*}:\cX_i^*\rightarrow\reals$ and $\bD_{\cX_i}:\cX_i\times\cX_i\rightarrow\reals$ for some $\varphi_{\cX_i}$ 
that is differentiable and 1-strongly convex with respect to
$\norm{\cdot}_{\cX_i}$.}
\end{defn}
{\begin{defn}
\na{Given} a diagonal matrix \rev{$\bC=\diag([c_i]_{i\in\cM})$ for some $c_i\geq 0$ for $i\in\cM$, define $\norm{\cdot}_{\bC}:\cX\rightarrow\reals$ such that $\norm{\bx}_{\bC}\triangleq\sqrt{\sum_{i\in\cM}c_i\rev{\norm{x_i}_{\cX_i}^2}}$; furthermore, $\bD_{\cX}^{\bC}(\bx,\bar{\bx})\triangleq\sum_{i\in\cM}c_i\bD_{\cX_i}(x_i,\bar{x}_i)$ for all $\bx,\bar{\bx}\in\cX$.}
\end{defn}}%
Next, we state our assumptions on {$f$, $h$ and $\Phi$}. 
\begin{assumption}\label{assum}
Suppose, \sa{for all $i\in\cM$,} $f_i:\cX_i\rightarrow\reals\cup\{+\infty\}$ is a closed convex function and \na{its convexity modulus 
\sa{with respect to} $\norm{\cdot}_{\cX_i}$ is $\mu_i\geq 0$,} and $h:\cY\rightarrow\reals\cup\{+\infty\}$ is a closed convex function. \na{Moreover, suppose that $\{f_i\}_{i\in\cM}$, $h$ and $\Phi:\cX\times\cY\rightarrow\reals$ satisfy the following assumptions:}\\
{\bf (i)} for any fixed {$y\in\dom h$,} $\Phi(
\cdot,y)$ is convex \rev{on $\dom f$,} coordinate-wise Lipschitz differentiable {on an open set containing $\dom f$}, 
and for all $i\in\cM$, there exists \rev{$L_{x_ix_i}\geq 0$ 
such that} {for any $\bar{\bx}\in\dom f$ and $v\in\cX_i$ satisfying $\bar{\bx}+U_iv\in\dom f$, and $\bar{y}\in\dom h$,}
\begin{equation}
\label{eq:Lxx}
\norm{\grad_{x_i} \Phi(\bar{\bx}+U_iv,\bar{y})-\grad_{x_i} \Phi(\bar{\bx},\rev{\bar{y}})}_{\na{\cX_i^*}}\leq L_{x_ix_i}\norm{v}_{\na{\cX_i}};
\end{equation}
{\bf (ii)} for any fixed {$\bar{\bx}\in\dom f$, $\Phi(\bar{\bx},\cdot)$ is concave \rev{on $\dom h$,} differentiable on an open set containing $\dom h$,} {and there exists $L_{yy}\geq 0$ and {$L_{yx_i}>0$} for all $i\in\cM$} such that for any {$y,\bar{y}\in\dom h$, $v\in\cX_i$ and $i\in\cM$ satisfying $\bar{\bx}+U_iv\in\dom f$,}
\begin{align}
\norm{\grad_y \Phi(\bar{\bx}+U_iv,\bar{y})-\grad_y \Phi(\bar{\bx},y)}_{\cY^*}\leq L_{yy}\norm{y-\bar{y}}_\cY + L_{yx_i}\norm{v}_{\na{\cX_i}}\rev{;} \label{eq:Lyy}
\end{align}
{\bf (iii)} {
for any $i\in\cM$, $\argmin_{x_i \in \cX_i} \big\{ tf_i(x_i)+\fprod{s,x_i}+{\bD_{\cX_i}(x_i,\bar{x}_i)} \big\}$ can be computed efficiently for any $\bar{x}_i\in\dom f_i$, $s\in\cX_i^*$ and $t>0$.} Similarly, $\argmin_{y\in\cY}\{th(y)+\fprod{s,y}+{\bD_\cY(y,\bar{y})}\}$ is easy to compute for any $\bar{y}\in\dom h$, $s\in\cY^*$ and $t>0$.
\end{assumption}
We also define some \sa{$M\times M$} diagonal matrices to simplify the notation in the rest of the paper:
\begin{align}
\label{eq:diagonals}
\fM\triangleq \diag([\mu_i]_{i\in\cM}),\quad \bL_{\bx\bx}\rev{\triangleq}\diag([L_{x_ix_i}]_{i\in\cM}),\quad \bL_{y\bx}\rev{\triangleq}\diag([L_{yx_i}]_{i\in\cM}).
\end{align}%
Moreover, we define the 
\rev{largest} coordinate-wise Lipschitz constants \sa{for $\{L_{x_ix_i}\}_{i\in\cM}$ and $\{L_{y x_i}\}_{i\in\cM}$:}
\begin{align}\label{def:Lmax}
\overline{L}_{xx}\triangleq \max_{i\in\cM}L_{x_ix_i},\qquad \overline{L}_{yx}\triangleq \max_{i\in\cM} L_{yx_i}.
\end{align}

\begin{algorithm}[h!]
   \caption{Randomized Block-coordinate {Accelerated} Primal-Dual~\rev{(\texttt{RB-APD})} {Algorithm}}
   \label{alg:RBPD}
\begin{algorithmic}[1]
   \STATE {\bfseries Input:} 
   $(\bx_0,y_0)\in 
   \rev{\dom f\times \dom h}$, $\{\mu_i\}_{i\in\cM}\subseteq \reals_+$, 
   \rev{$\gamma^0>0$, $\bar{\tau}\in\left(0,\tfrac{1}{\bar{\mu}(M-1)}\right)$, where $\bar\mu\triangleq \max_{i\in\cM}\mu_i$} 
   \STATE {$(\bx_{-1},y_{-1})\gets(\bx_0,y_0)$},\quad  $\underbar{$\mu$}\gets\min_{i\in\cM}\mu_i$   
   \STATE \rev{$\tilde{\tau}^0\gets\bar{\tau}$,\quad  $\sigma^{-1}\gets\gamma^0\bar{\tau}$}
   \FOR{$k\geq 0$}
   \STATE $\sigma^k\gets \gamma^k\tilde{\tau}^k$,\quad $\theta^k\gets \frac{\sigma^{k-1}}{\sigma^k}$
   		\STATE {$q^k\gets {M}(\grad_y \Phi(\bx^{k},{y^{k}})-\grad_y\Phi(\bx^{k-1},{y^{k-1}}))$}
		\STATE \label{algeq:s} $s^k\gets \grad_y \Phi(\bx^{k},{y^k})+\theta^{k}{q^k}$
   		\STATE \label{algeq:y} $y^{k+1}\gets \argmin_{y\in\cY} h(y){-\fprod{s^k,y}}+\frac{1}{\sigma^k}\bD_\cY(y,y^k)$\label{algeq:y-problem} 	
   		\STATE Choose $i_k\in\cM$ uniformly at {random}
        \STATE \rev{$\tau_{i_k}^k\gets \big(\frac{1}{M}(\mu_{i_k}+\frac{1}{\tilde{\tau}^k})-\mu_{i_k}\big)^{-1}$}
        \STATE {$\bx^{k+1}\gets \bx^k$}
        \STATE \label{algeq:x} $x_{i_k}^{k+1}\gets \argmin_{\rev{x\in\cX_{i_k}}} f_{i_k}(x)+\fprod{\grad_{x_{i_k}}\Phi(\bx^k,y^{k+1}),x}+\frac{1}{\tau_{i_k}^k}\bD_{\cX_{i_k}}(x,x_{i_k}^k)$
   	\STATE \rev{$\gamma^{k+1}\gets \gamma^k(1+\underbar{$\mu$}\tilde{\tau}^k)$}
   	\STATE $\tilde{\tau}^{k+1}\gets\tilde{\tau}^k\sqrt{\frac{\gamma^k}{\gamma^{k+1}}}$,\quad  $k\gets k+1$
   \ENDFOR
\end{algorithmic}
\end{algorithm}
\section{Randomized \sa{Block-Coordinate} Accelerated Primal-dual \sa{Algorithms}}
In this section, we state 
\sa{Randomized Block-Coordinate Accelerated Primal-Dual~(\texttt{RP-APD}) algorithm and its extension incorporating backtracking line search~(\texttt{RB-APD-B})} \rev{to solve~\eqref{eq:original-problem} for some given arbitrary norms $\norm{\cdot}_{\cX_i}$ on $\cX_i$ \sa{and $\norm{\cdot}_{\cY}$ on $\cY$, and some Bregman functions $\bD_{\cX_i}$ and $\bD_{\cY}$} as in Definition~\ref{def:bregman}.} 
We \sa{first} propose a randomized block-coordinate accelerated primal-dual (\texttt{RB-APD}) method (see  {Algorithm~\ref{alg:RBPD}}) consists of a single loop primal-dual steps. After the initialization of parameters, a dual ascent step is taken in the direction of $\grad_y\Phi$ with a momentum term in terms of $\grad_y\Phi$ to gain acceleration for general convex-concave problems. This can be viewed as a generalization of the approach proposed by~\cite{hamedani2021primal} with $M=1$ (\na{it also generalizes the commonly used extrapolation step when the function $\Phi$ is bilinear}\footnote{Majority of the existing methods use  past iterates to gain momentum, \rev{e.g., \cite{chambolle2011first,chambolle2016ergodic,chambolle2017stochastic,alacaoglu2022convergence,alacaoglu2022complexity}} use the momentum term $(1+\theta^k)\bx^k-\theta^k \bx^{k-1}$. This iteration can be recovered by our method when $\Phi$ is bilinear.}). Then, a primal block-coordinate descent step is taken 
{using} $\grad_{x_i}\Phi$ for a uniformly chosen random block coordinate.

\begin{algorithm}[h!]
   \caption{Randomized Block-coordinate Accelerated Primal-Dual algorithm with Backtracking (\texttt{RB-APD-B})}
   \label{alg:RBPDB}
\begin{algorithmic}[1]
   \STATE {\bfseries Input:} 
   $(\bx_0,y_0)\in 
   \rev{\dom f\times \dom h}$, $\{\mu_i\}_{i\in\cM}\subseteq \reals_+$, $c_\alpha,c_\beta,\delta\geq 0$, $\eta\in (0,1)$, 
   $\gamma_0>0$, \rev{$\bar{\tau}\in\left(0,\tfrac{1}{\bar{\mu}(M-1)}\right)$, where $\bar\mu\triangleq \max_{i\in\cM}\mu_i$}
   \STATE {$(\bx_{-1},y_{-1})\gets(\bx_0,y_0)$},\quad $\underbar{$\mu$}\gets\min_{i\in\cM}\mu_i$
   \STATE $\tilde{\tau}^0\gets\bar{\tau}$,\quad  \rev{$\sigma^{-1}\gets\gamma^0\bar{\tau}$}, 
   \FOR{$k\geq 0$}
   \STATE Choose $i_k\in\cM$ uniformly at {random}
   \LOOP
   \STATE $\sigma^k\gets \gamma^k\tilde{\tau}^k$,\quad $\theta^k\gets \frac{\sigma^{k-1}}{\sigma^k}$
   \STATE $\alpha^{k+1}\gets c_\alpha/\sigma^k$,\quad $\beta^{k+1}\gets c_\beta/\sigma^k$
   		\STATE $q^k\gets {M}(\grad_y \Phi(\bx^{k},{y^{k}})-\grad_y\Phi(\bx^{k-1},{y^{k-1}}))$
		\STATE \label{algeq:s-B} $s^k\gets \grad_y \Phi(\bx^{k},{y^k})+\theta^{k}{q^k}$
   		{\STATE \label{algeq:y} $y^{k+1}\gets \argmin_{y\in\cY} h(y)-\fprod{s^k,y}+\frac{1}{\sigma^k}\bD_\cY(y,y^k)$} 		
        \STATE \rev{$\tau_{i_k}^k\gets \big(\frac{1}{M}(\mu_{i_k}+\frac{1}{\tilde{\tau}^k})-\mu_{i_k}\big)^{-1}$}
        \STATE {$\bx^{k+1}\gets\bx^k$}
        \STATE \label{algeq:x} 
        {
            $x_{i_k}^{k+1}\gets
        \argmin_{x\in\cX_{i_k}}f_{i_k}(x)+\langle\grad_{x_{i_k}}\Phi(\bx^k,y^{k+1}),x\rangle+\na{\frac{1}{\tau_{i_k}^k}\bD_{\cX_{i_k}}(x,x_{i_k}^k)}$
        } 
   	\IF{$\eh{C^k_*}\leq -\delta\Big[\frac{\rev{M}}{\tau_{i_k}^k}\bD_{\cX_i}(x_{i_k}^{k+1},x_{i_k}^k)+\frac{1}{\sigma^k}\bD_\cY(y^{k+1},y^k)\Big]$}
   	\STATE {\bf go to} {line~21}
   	\ELSE
   	    \STATE $\tilde{\tau}^k\gets\eta\tilde{\tau}^k$
   	\ENDIF
   	\ENDLOOP
   	\STATE \label{algeq:exit} \rev{$\gamma^{k+1}\gets \gamma^k(1+\underbar{$\mu$}\tilde{\tau}^k)$}
   	\STATE $\tilde{\tau}^{k+1}\gets\tilde{\tau}^k\sqrt{\frac{\gamma^k}{\gamma^{k+1}}}$,\quad  $k\gets k+1$
   \ENDFOR
\end{algorithmic}
\end{algorithm}

In many practical settings, 
\sa{the Lipschitz constants may not be readily available and computing/estimating them} to select an appropriate step-size can be difficult. Therefore, we propose a novel backtracking linesearch that can be combined with \texttt{RB-APD} (called \texttt{RB-APD-B}) to adaptively select primal and dual step-sizes without the knowledge of Lipschitz constants. To this end, we will define a test function \eh{$C^k_*$} 
\rev{that 
implicitly estimates local Lipschitz constants 
in order to accept or reject the tested primal-dual step-sizes at each bactracking iteration.} In fact, at iteration $k\geq 0$ such a test function can be calculated using only the information 
\rev{related to} coordinate $i_k$, \rev{i.e., checking the test function does not involve computing $\grad_{x_i}\Phi$ for $i\in\cM\setminus\{i_k\}$}. Starting from an initial arbitrary step-size, at each iteration we reduce the step-sizes by a factor of $\eta$ until \eh{$C^k_*$} 
falls below a certain threshold. The details of 
\rev{our method} is displayed in Algorithm~\ref{alg:RBPDB}. 

Next, we formally define \sa{the test function we adopted within \texttt{RB-APD-B}}.
\begin{defn}
\label{eq:Tk}
For any $k\geq 0$, given \rev{$\tilde\tau^k,\sigma^k,\theta^k>0$, \sa{and $c_\alpha,c_\beta\geq 0$ such that $M(c_\alpha+c_\beta)\leq 1$,} 
define $\bT^k\triangleq\diag\left(\big[\frac{1}{\tau_i^k}\big]_{i\in\cM}\right)$ 
\sa{where} $\tau_i^k\triangleq \big(\frac{1}{M}(\mu_{i}+\frac{1}{\tilde{\tau}^k})-\mu_{i}\big)^{-1}$ for $i\in\cM$.}
We define the test function for the backtracking line-search as \sa{follows:}
\rev{\small{
\begin{align}\label{eq:test-function}
C^k_*\triangleq \nonumber &M\Big(\Phi(\bx^{k+1},y^{k+1})-\Phi(\bx^k,y^{k+1})-\fprod{\grad_\bx\Phi(\bx^k,y^{k+1}),~\bx^{k+1}-\bx^k}\Big)\nonumber\\
&+\frac{M\sigma^k}{2c_\alpha}\norm{\grad_y\Phi(\bx^{k+1},y^{k+1})-\grad_y\Phi(\bx^k,y^{k+1})}_{\cY^*}^2+\frac{M\sigma^k}{2c_{\beta}}\norm{\grad_y\Phi(\bx^k,y^{k+1})-\grad_y\Phi(\bx^k,y^k)}^2_{\cY^*}
\nonumber\\
&-M\bD_\cX^{\bT^k}(\bx^{k+1},\bx^k) -\Big(\frac{1-M(c_\alpha+c_\beta)}{\sigma^k}\Big)\bD_{\cY}(y^{k+1},y^k).
 \end{align}}}%
\end{defn}
\begin{remark}
\sa{One can also consider an alternative \eh{test function} 
involving only partial gradients:} 
{\small
\begin{align*}
\widetilde{C}^k_*\triangleq & M\fprod{\grad_{x_{i_k}}\Phi(\bx^{k+1},y^{k+1})-\grad_{x_{i_k}}\Phi(\bx^k,y^{k+1}),x_{i_k}^{k+1}-x_{i_k}^k} +\frac{M\sigma^k}{2c_\alpha}\norm{\grad_y\Phi(\bx^{k+1},y^{k+1})-\grad_y\Phi(\bx^k,y^{k+1})}_{\cY^*}^2\nonumber\\
&+\frac{M\sigma^k}{2c_\beta}\norm{\grad_y\Phi(\bx^k,y^{k+1})-\grad_y\Phi(\bx^k,y^k)}^2_{\cY^*}  -\frac{M}{\tau_{i_k}^k}\bD_{\cX_i}(x_{i_k}^{k+1},x_{i_k}^k) -\Big(\frac{1-M(c_\alpha+c_\beta)}{\sigma^k}\Big)\bD_{\cY}(y^{k+1},y^k).\nonumber
\end{align*}}%
Note that convexity of $\Phi(\cdot,y)$ 
implies that $\widetilde{C}^k_*$ upper bounds $C^k_*$.
It is worth highlighting that \sa{both $C_*^k$ and $\widetilde{C}^k_*$ only use} the partial gradient information of $\grad_{x_{i_k}}\Phi$ and step-size $\tau_{i_k}^k$ 
\rev{related to the randomly picked coordinate $i_k\in\cM$, and does not involve computing $\grad_{x_i}\Phi$ \sa{at $\bx^{k+1}$ for any} $i\in\cM\setminus\{i_k\}$}.
\end{remark}

\section{Convergence Analysis}
\label{sec:methodology}
In this section, we discuss the convergence properties of {\texttt{RB-APD-B} and \texttt{RB-APD} algorithms} in Theorems~\ref{thm:backtrack-body} and~\ref{thm:main-body}, respectively, which {are the main results} of this paper. All related proofs are provided in the appendix. 
In the rest, $\bE[\cdot]$ denotes the expectation 
operation.

\begin{assumption}
\label{rem:bregman}
    For the case $\underbar{$\mu$}>0$, we assume that the Bregman distance generating function $\varphi_{\cX_i}(x_i)=\frac{1}{2}\norm{x_i}_{\cX_i}^2$ for $\norm{x_i}_{\cX_i}=\sqrt{\fprod{x_i,x_i}}$ for all $i\in\cM$, which leads to $\bD_{\cX}(\bx,\bx')=\frac{1}{2}\norm{\bx-\bx'}^2$. On the other hand, for the case $\underbar{$\mu$}=0$, \sa{after setting $\fM=\mathbf{0}_{M\times M}$ without loss of generality, i.e., treating $\mu_i=0$ for all $i\in\cM$,} a more general distance generating function $\varphi_{\cX_i}(x_i)$ can be chosen as defined in Definition~\ref{def:bregman} assuming that it has a $L_{\varphi_{\cX}}$-Lipschitz continuous gradient for all $i\in\cM$. \sa{That said,} for the case $\underbar{$\mu$}=0$, one can still work with the original $\fM$ without setting it to $\mathbf{0}_{M\times M}$ if one uses quadratic Bregman as in \sa{the case} $\underbar{$\mu$}>0$ case, i.e., setting $\bD_{\cX}(\bx,\bx')=\frac{1}{2}\norm{\bx-\bx'}^2$.
\end{assumption}

\sa{In \texttt{RB-APD}, stated in~Algorithm~\ref{alg:RBPD}, we considered a particular step size sequence. However, \texttt{RB-APD} can be shown to work for a larger class of step sizes.}
We next describe such step-size conditions to ensure the convergence guarantee for 
\sa{\texttt{RB-APD}}. Indeed, we provide \eh{\emph{(i)} a set of conditions that provide upper bounds on $\tau_{i_k}^k$ and $\sigma^k$ depending on the Lipschitz constants, for any $k\geq 0$ --see \eqref{eq:step-size-condition-pd-new1} and \eqref{eq:step-size-condition-pd-new}; \emph{(ii)} a set of recursive inequalities that connect primal and dual step-sizes --see \eqref{eq:step-size-tau-B} and \eqref{eq:step-size-theta-B}.}
\begin{assumption}\label{assum:step-size}
{There exists $\{[\tau_i^k]_{i\in\cM},\sigma^k,\theta^k\}_{k\geq 0}$ such that $\theta^0=1$, and} 
\begin{subequations}
\begin{align}
&\frac{1-\delta}{\tau_{i_k}^k} \geq L_{x_{i_k}x_{i_k}}+\frac{L^2_{yx_{i_k}}}{\alpha^{k+1}},\label{eq:step-size-condition-pd-new1}\\
&\frac{1-\delta}{\sigma^k} \geq M\theta^k(\alpha^{k}+\beta^k)+\eyh{\frac{ML^2_{yy}}{\beta^{k+1}}}, \label{eq:step-size-condition-pd-new}\\
&t^{k}(\bT^{k}+\fM)\succeq t^{k+1}\big(\bT^{k+1}+(1-\frac{1}{M})\fM\big), \label{eq:step-size-tau-B}\\
&{\frac{t^k}{\sigma^k}\geq \frac{t^{k+1}}{\sigma^{k+1}}},\quad t^{k+1}\theta^{k+1}=t^k, \label{eq:step-size-theta-B}
\end{align}
\end{subequations}
for some positive $\{t^k,\alpha^k\}_{k\geq 0}$ such that $t^0=1$, non-negative $\{\beta^k\}_{k\geq 0}$ and $\delta\in [0,1)$, \sa{where $\bT^k\triangleq [\frac{1}{\tau_i^k}\id_{m_i}]_{i\in\cM}$.} 
\end{assumption}

\begin{remark}\label{rem:step}
\sa{Given $\tilde\tau^0,\gamma^0>0$,} stepsize update rule in Algorithm~\ref{alg:RBPD} implies that \sa{$\sigma^0=\gamma^0\tilde\tau^0$, $\theta^0=1$ and} for all $k\geq 0$, 
\begin{equation*}
\sa{\tau_i^k=\left(\frac{1}{M}\Big(\mu_i+\frac{1}{\tilde\tau^k}\Big)-\mu_i\right)^{-1}\ \forall~i\in\cM,}\quad \theta^{k+1}=\frac{1}{\sqrt{1+{\underbar{$\mu$}}\tilde{\tau}^k}},\quad \sigma^{k+1}=\sigma^k/\theta^{k+1},\quad \tilde{\tau}^{k+1}=\theta^{k+1}\tilde{\tau}^k.
\end{equation*}
Suppose the parameters \sa{$\tilde\tau^0$ and $\gamma^0$} are initialized such that 
\sa{$\bT^0$ and $\sigma^0$ satisfy}
\begin{subequations}
\label{eq:initial_step_condition}
\begin{align}
&\Big((1-\delta)\bT^0-\bL_{\bx\bx}\Big)\frac{1}{\sigma^0}\succeq \frac{1}{c_\alpha}\bL^2_{y\bx},\\
&\ 1-\big(\delta+M(c_\alpha+c_\beta)\big)\geq \frac{\rev{M} L_{yy}^2}{c_\beta}(\sigma^0)^2,
\end{align}
\end{subequations}
for 
\sa{some} $\delta, c_\alpha,\ c_\beta\in\reals_+$ such that $M(c_\alpha+c_\beta)+\delta\leq 1$ satisfying $c_\alpha,~c_\beta> 0$ when $L_{yy}>0$, and $c_\alpha>0$, $c_\beta=0$ when $L_{yy}=0$. Then $\{[\tau_i^k]_{i\in\cM},\sigma^k,\theta^k\}_{k\geq 0}$ satisfies Assumption \ref{assum:step-size} by selecting $t^k=\sigma^k/\sigma^0$.
\end{remark}

\sa{As it is apparent from the remark above, for the convex-concave setting, i.e., $\underbar{$\mu$}=0$, with $\fM=\mathbf{0}$, a constant step size sequence can be selected for the \texttt{RB-APD}, i.e., $\tilde\tau^k=\tilde\tau^0$, $\sigma^k=\sigma^0$, and $\theta^k=1$ for all $k\geq 0$ such that $(\tilde\tau^0,\sigma^0)$ satisfy \eqref{eq:initial_step_condition}. We show that this choice implies $\cO(1/K)$ rate for the expected gap function; hence, $\cO(1/K)$ rate for the primal suboptimality. On the other hand, in strongly convex setting, i.e., $\fM\succ 0$, an accelerated convergence rate of $\cO(1/K^2)$ for the primal suboptimality can be obtained by decreasing the primal step-size $\tau_i^k$ and increasing the dual step-size $\sigma^k$.}

\sa{Next, we first consider the scenario where the Lipschitz constants are not available, and we show that \texttt{RB-APD-B} stated in Algorithm \ref{alg:RBPDB} is well-defined, i.e., the condition in Line 15 holds in finite number of backtracking steps, and the generated step-size sequence $\{[\tau_i^k]_{i\in\cM},\sigma^k,\theta^k\}$ satisfy the conditions in \eqref{eq:step-size-tau-B} and \eqref{eq:step-size-theta-B}. Later we show that \texttt{RB-APD} can be analyzed as a special case of \texttt{RB-APD-B}, and this is why we are considering \texttt{RB-APD-B} first in the following discussion.}

\rev{
\begin{lemma}\label{assum:step-size-B}
Under \sa{Assumptions~\ref{assum} and~\ref{rem:bregman}}, consider \texttt{RB-APD-B} displayed in Algorithm~\ref{alg:RBPDB} for any given $\delta\in[0,1)$ and $c_\alpha,c_\beta\geq 0$ such that $M(c_\alpha+c_\beta)+\delta\leq 1$. When $L_{yy}>0$, set $c_\alpha,~c_\beta> 0$; otherwise, when $L_{yy}=0$, set $c_\alpha>0$ and $c_\beta=0$.
The \texttt{RB-APD-B} iterate and step-size sequences, i.e., $\{\bx^k,y^k\}_{k\geq 0}$ and $\{[\tau^k_i]_{i\in\cM},\sigma^k,\theta^k\}_{k\geq 0}$, are well-defined; more precisely, for any $k\geq 0$, 
\begin{equation}
    \eh{C^k_*}\leq -\delta\Big[\rev{M}\bD_\cX^{\bT^k}(\bx^{k+1},\bx^k)+\frac{1}{\sigma^k}\bD_\cY(y^{k+1},y^k)\Big], \label{eq:step-size-Ck-B}
\end{equation}
holds after finite number of backtracking iterations, where \eh{$C^k_*$} is defined in \eqref{eq:test-function} using $\{
\rev{\sigma^k,\bT^k}\}_k$ and $c_\alpha,c_\beta$ as above. Furthermore, $\{[\tau_i^k]_{i\in\cM},\sigma^k,\theta^k\}_{k\geq 0}$ satisfy \eqref{eq:step-size-tau-B} and \eqref{eq:step-size-theta-B} for $\{t^k\}_{k\geq 0}$ such that $t^k=\sigma^k/\sigma^0$ for $k\geq 0$.
\end{lemma}}
\begin{proof}
    \sa{See Section~\ref{sec:proof-step} in the appendix.}
\end{proof}

We are now ready to state the convergence \sa{guarantees} of \sa{\texttt{RB-APD-B}, stated in Algorithm \ref{alg:RBPDB}.} 
\begin{theorem}\label{thm:backtrack-body} 
\rev{Suppose Assumptions~\ref{assum} \sa{and~\ref{rem:bregman} hold}. Let $\delta\in[0,1)$, $c_\alpha>0$ and $c_\beta\geq 0$ 
\sa{be} chosen as stated below.}
For any given $(\bx_0,y_0)\in\dom f\times \dom h$, $\gamma^0>0$ and $\bar{\tau}\in\big(0,\frac{1}{\bar{\mu}(M-1)}\big)$, \sa{the number of backtracking steps executed within} {\texttt{RB-APD-B}}, stated in Algorithm \ref{alg:RBPDB}, 
\sa{is finite and bounded by {$1+\log_{1/\eta}(\frac{\bar{\tau}}{\Psi})$} uniformly for $k\geq 0$} for some $\Psi>0$\footnote{$\Psi$ is a function of the problem and algorithm parameters, and its exact form is provided in the appendix.}. Let $\{\bx^k,y^k\}_{k\geq 0}$ denote the iterate sequence generated by {\texttt{RB-APD-B}}. For $K\geq 1$, let $T^K{\triangleq}\sum_{k=0}^{K-1}t^k$, $\bar{\bx}^K=\frac{M}{T^K+M-1}\Big(\sum_{k=0}^{K-2}\big(t^k-(1-\tfrac{1}{M})t^{k+1}\big)\bx^{k+1}+t^{K-1}\bx^K\Big)$ and $\bar{y}^K=\frac{1}{T^K}\sum_{k=0}^{K-1}t^ky^{k+1}$ for $\{t^k\}_{k\geq 0}$ such that $t^k=\sigma^k/\sigma^0$ for $k\geq 0$.\\
\textbf{(Part I.)} Suppose $\underbar{$\mu$}=0$ and \rev{$\dom f\times \dom h$ is compact}. \sa{If $L_{yy}>0$, then
assume that} $M(c_\alpha+c_\beta)+\delta{\leq 1}$ {holds for some $c_\alpha,c_\beta>0$;}
\sa{otherwise, i.e., $L_{yy}=0$, assume that 
${M} c_\alpha+\delta{\leq 1}$ for some $c_\alpha>0$ and let $c_\beta=0$. For any $(\bx,y)\in\dom f\times \dom h$, define}
\begin{align*}
    B_1(\bx)\triangleq M\bD_\cX^{(1+\frac{1}{M})\bT^0+\fM}(\bx,\bx^0),\qquad 
B_2(y)\triangleq \Big(\tfrac{1}{\sigma^0}+\theta^0(M-1)L_{yy}\Big)\bD_\cY(y,y^0).
\end{align*}
Then, the following bound holds
\rev{\begin{align}\label{eq:general-rate-result-body}
&\bE\left[\cG(\bar\bx^K,\bar y^K)\right]\leq \frac{1}{T^K}\left(\bar B+\sup_{\bx\in\dom f}B_1(\bx)+\sup_{y\in\dom h}B_2(y)\right),\quad \forall~K\geq 1,
\end{align}}%
for some constant\footnote{See appendix for its dependence on algorithm and problem parameters.} $\bar{B}\in\reals_+$, and $T^K 
=\Omega(K)$, implying $\cO(1/K)$ sublinear rate for $\bE\left[\cG(\bar\bx^K,\bar y^K)\right]$. \sa{Finally,} if a saddle point for \eqref{eq:original-problem} exists and $\delta>0$, then 
$\{(\bx^k,y^k)\}_{k\geq 0}$ converges to a saddle point almost surely.\\
\textbf{(Part II.)} Suppose $\underbar{$\mu$}>0$ and $L_{yy}=0$. 
Assume 
${M} c_\alpha+\delta\in(0,1]$ and $c_\beta=0$.
If a saddle point $(\bx^*,y^*)$ for \eqref{eq:original-problem}
exists, then \sa{$\{\bx^k\}_{k\geq 0}$} converges to $\bx^*$ and $\{y^k\}$ has a limit point almost surely. \sa{Moreover,}
\rev{
 \begin{align}
 \nonumber\bE\left[\tfrac{\gamma^K}{2}\big\|{\bx^*-\bx^{K}}\big\|_{\cX}^2 +(1-M c_\alpha)\bD_{\cY}(y^*,y^{K})\right]& \leq 
 \tfrac{\gamma^0}{2}\norm{\bx^*-\bx^0}^2_{\cX}+\bD_{\cY}(y^*,y^0)\\&\quad +\sigma^0(M-1)\Big(\cL(\bx^0,y^*)-\cL({\bx}^*,y^*)\Big), \label{eq:xy-bound-II-body}
 \end{align}
 \begin{align}
\nonumber\bE\left[F(\bar\bx^K)-F(\bx^*)\right]&\leq \frac{1}{T^K}\Big(\tfrac{\gamma^0}{2\sigma^0}\norm{\bx^*-\bx^0}^2_{\cX}+\tfrac{1}{\sigma^0}\sup_{y\in\dom h}\bD_{\cY}(y,y^0)\\&\quad+(M-1)\big(F(\bx^0)-F(\bx^*)\big)\Big),
\label{eq:F-bound-body}
\end{align}}%
\sa{hold} \rev{for all $K\geq 1$. Furthermore, both $\gamma^K=\Omega(K^2)$ and $T^K=\Omega(K^2)$; hence, ${\bE\big[}\norm{\bx^K-\bx^*}_{\cX}^2{\big]}=\cO(1/K^2)$ and 
$0\leq \bE\left[F(\bar\bx^K)-F(\bx^*)\right]\leq\cO(1/K^2)$.} \sa{Finally, if $\delta>0$, then any limit point of $\{\bx^k,y^k\}$ is a saddle point almost surely.}
\end{theorem}
\begin{proof}
\sa{See Section~\ref{sec:proof} in the appendix. The proof consists of three main parts: in the first part, a one-step result is shown in Section~\ref{sec:proof_onestep} followed by the proof of Lemma~\ref{assum:step-size-B} in Section~\ref{sec:proof-step}; in the second part, asymptotic convergence is shown in Section~\ref{sec:proof-asymp}; and finally, in the third part, convergence rate is established in Section~\ref{sec:proof-rate}.}
\end{proof}
Note that the term $\cL(\bx^0,y^*)-\cL({\bx}^*,y^*)$ in \eqref{eq:xy-bound-II-body} can be bounded above by $F(\bx^0)-F(\bx^*)$.

\sa{Below we state the convergence guarantees of \texttt{RB-APD}, i.e., Algorithm \ref{alg:RBPD}, which can be considered as a special case of \texttt{RB-APD-B}. Indeed, whenever $\bT^0$ and $\sigma^0$ are chosen such that \eqref{eq:initial_step_condition} holds for some $c_\alpha,c_\beta\geq 0$ and $\delta\in[0,1)$ as described in Theorem~\ref{thm:backtrack-body}, one can argue that stopping condition for backtracking in \eqref{eq:step-size-Ck-B} holds for $\bT^k$ and $\sigma^k$ generated by \texttt{RB-APD} in Algorithm~\ref{alg:RBPD} without executing any backtracking step, i.e., $\{\bx^k,y^k\}_{k\geq 0}$ generated by \texttt{RB-APD}, corresponding to $\{\bT^k,\sigma^k,\theta^k\}_{k\geq 0}$ as in Remark~\ref{rem:step}, satisfies \eqref{eq:step-size-Ck-B} for all $k\geq 0$.}
\begin{theorem}\label{thm:main-body}
Suppose Assumptions~\ref{assum} \sa{and~\ref{rem:bregman} hold}, $\gamma^0>0$ and $\bar{\tau}\in\big(0,\frac{1}{\bar{\mu}(M-1)}\big)$ \sa{are chosen such that \eqref{eq:initial_step_condition} holds for some $c_\alpha,c_\beta\geq 0$ and $\delta\in[0,1)$ as described in Theorem~\ref{thm:backtrack-body}}. Let
$\{\bx^k,y^k\}_{k\geq 0}$ be the iterate sequence
generated by {\texttt{RB-APD}} \sa{initialized at an arbitrary $(\bx_0,y_0)\in\dom f\times \dom h$}, and let
\rev{$\bar{\bz}^K=(\bar{\bx}^K,\bar{y}^K)$} and $T^K$ be defined for $K\geq 1$ as in Theorem~\ref{thm:backtrack-body}.

\textbf{(Part I.)} {Suppose $\underbar{$\mu$}=0$ and $\dom f\times\dom h$ is compact. The stepsize rule in Algorithm~\ref{alg:RBPD} implies $\tilde{\tau}^k=\tilde{\tau}^0$, $\sigma^k=\sigma^0$ and $\theta^k=1$ for {all} $k\geq 0$; hence, $t^k=1$ for $k\geq 0$, implying $T^K=K$.} \sa{Moreover, \eqref{eq:general-rate-result-body} holds for all $K\geq 1$, which implies $\bE[\cG(\bar\bz^K)] \leq \cO(1/K)$. Finally,}
if a saddle point for \eqref{eq:original-problem} exists and
\sa{\eqref{eq:initial_step_condition} holds with $\delta>0$}, then 
$\{(\bx^k,y^k)\}_{k\geq 0}$ {almost surely} converges to a saddle point. 

\textbf{(Part II.)} Suppose $\underbar{$\mu$}>0$ and $L_{yy}=0$.
If a saddle point for \eqref{eq:original-problem}
exists, then $\{\bx^k\}_{k\geq 0}$ converges to $\bx^*$ and $\{y^k\}$ has a limit point almost surely.\footnote{Since $\underbar{$\mu$}>0$, $\bx^*$ 
denotes the unique solution to $\min_{\bx\in\cX} F(\bx)$.}  \sa{Furthermore, both \eqref{eq:xy-bound-II-body} and \eqref{eq:F-bound-body} hold for all $K\geq 1$ such that} $\gamma^K=\Omega(K^2)$ and $T^K=\Omega(K^2)$; hence, ${\bE\big[}\norm{\bx^K-\bx^*}_{\cX}^2{\big]}=\cO(1/K^2)$ and 
$0\leq \bE\left[F(\bar\bx^K)-F(\bx^*)\right]\leq\cO(1/K^2)$. \sa{Finally, if \eqref{eq:initial_step_condition} holds with $\delta>0$,} then any limit point \sa{of $\{(\bx^k,y^k)\}_{k\geq 0}$} 
 is a saddle point almost surely.
\end{theorem}
\begin{proof}
    See Section~\ref{sec:proof} in the appendix.
\end{proof}

\section{Numerical Experiment}
In this section, we implement our scheme on a kernel matrix learning problem described in section \ref{sec:intro}, and benchmark with 
\texttt{Mirror-prox} by \cite{he2015mirror}, proximal extra gradient method (\texttt{PEGM}) by \cite{malitsky2018first}, and accelerated primal-dual (\texttt{APDB}) by \cite{hamedani2021primal}. The experiments are performed on a machine running 64-bit Windows 10 with Intel i7-8650U @2.11GHz and 16GB RAM. 

\subsection{Learning a Kernel Matrix}
We {consider the formulation in~\eqref{eq:kernel_learn_simple} for the kernel class $\cK$ in~\eqref{eq:Kernel_Class}, and we} use a similar setup as in \citep{lanckriet2004learning}. In particular, we consider three kernel functions ($q=3$); polynomial kernel function $\phi_1(\ba,\bar{\ba})=(1+\ba^\top\bar{\ba})^2$, Gaussian kernel function $\phi_2(\ba,\bar{\ba})=\exp(-0.5(\ba-\bar{\ba})^\top(\ba-\bar{\ba})/0.1)$, and linear kernel function $\phi_3(\ba,\bar{\ba})=\ba^\top\bar{\ba}$ to compute $K_1, K_2, K_3{\in\mathbb{S}^m_+}$, respectively, {where $m$ denotes the number data points}. We set $\lambda =1$, $c=\sum_{\ell=1}^3r_\ell$, where $r_\ell=\hbox{trace}(K_\ell)$ for $\ell=1,2,3$. The kernel matrices are normalized as in \citep{lanckriet2004learning}; thus, $\diag(K_\ell)={\mathbf{1}_m}$ and $r_\ell=m$ for each $\ell$. We consider {two} different datasets: a subset of \texttt{svmguide1} {($m=3000$, $n=4$)} and a subset of \texttt{MNIST} to classify digits of $4$ and $9$ {($m=7500$, $n=784$)} from \citep{chang2011libsvm}, {where $n$ denotes the number of features}. We use 80\% of data points as the training set and the rest as the test set. 

The algorithms are compared in terms of relative error for the solution ($\|{\bx^k-\bx^*}\|_2/\|{\bx^*}\|_2$), where $(\bx^*,y^*)$ denotes a saddle point for the considered problem. The purpose of this experiment is to benchmark our method against other methods in terms of 
{convergence behavior observed in practice}. To this end, {the optimal primal solution $\bx^*$ 
to} the problem {in}~\eqref{eq:kernel_learn_simple} is 
{computed calling the} 
commercial optimization solver MOSEK through CVX~\citep{grant2008cvx}.

{\bf Effect of number of blocks.} In Figure \ref{fig:block}, we compare the performance of \texttt{RB-APD-B} for different numbers of primal blocks, $M\in\{1,10,50,100,800\}$ for \texttt{svmguide1} and $M\in\{1,10,50,100,1000\}$ for \texttt{MNIST} {datasets}. \texttt{RB-APD-B} with $M=100$ primal blocks has the best performance compared to other block partition strategies. The main reason is that  partitioning minimization variable into blocks reduces per iteration complexity and allows more iterations to be performed in a given time interval. {That said, employing an excessive number of blocks, e.g., $M=1000$ for \texttt{MNIST} dataset, degrades the overall performance measured by the total number of primal and dual oracle calls, {compared} to adopting a moderate number of blocks, e.g., $M=100$ for \texttt{MNIST}. Thus, 
one may conclude that \sa{there is a 
sweet spot for the number of blocks that optimizes the} convergence behavior in terms of oracle complexity.} \vspace*{-2mm}
\begin{figure}[htb]
    \centering
        \hspace*{-3mm}\subfloat[\texttt{svmguide1} dataset]{
    \includegraphics[scale=0.15]{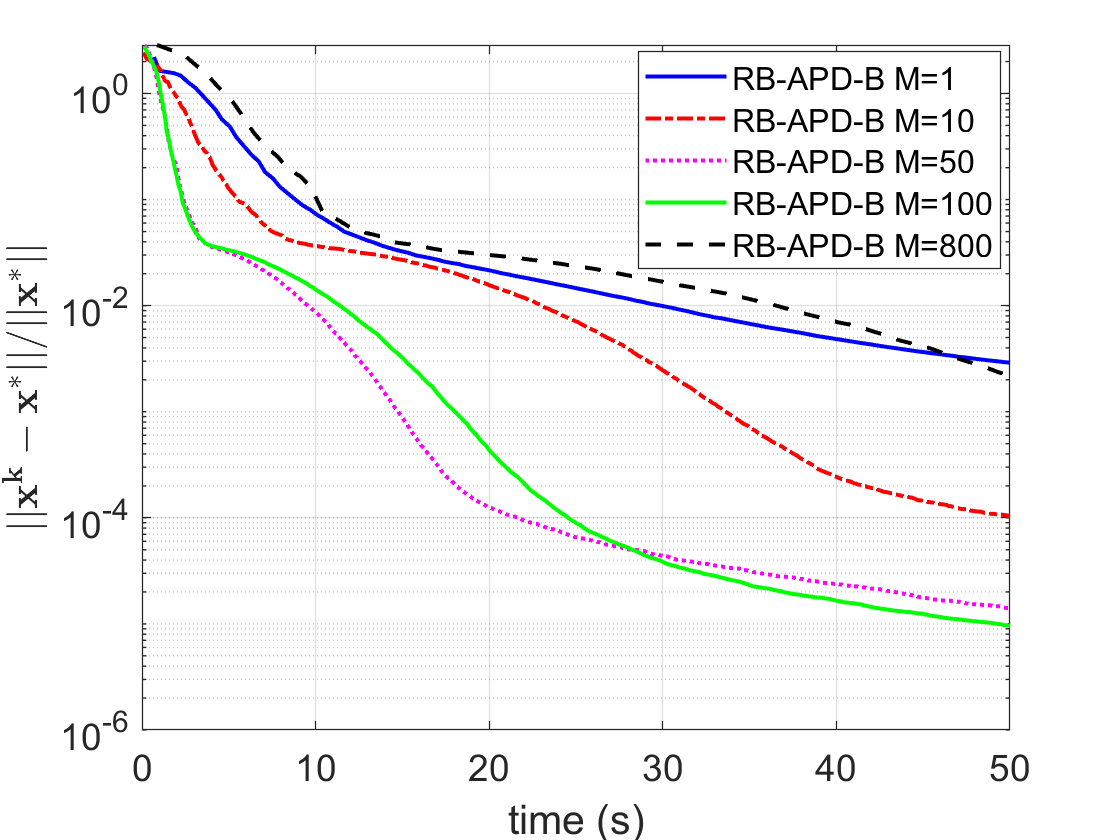}}\hspace*{-5mm}
    \subfloat[\texttt{MNIST} dataset]{
    \includegraphics[scale=0.15]{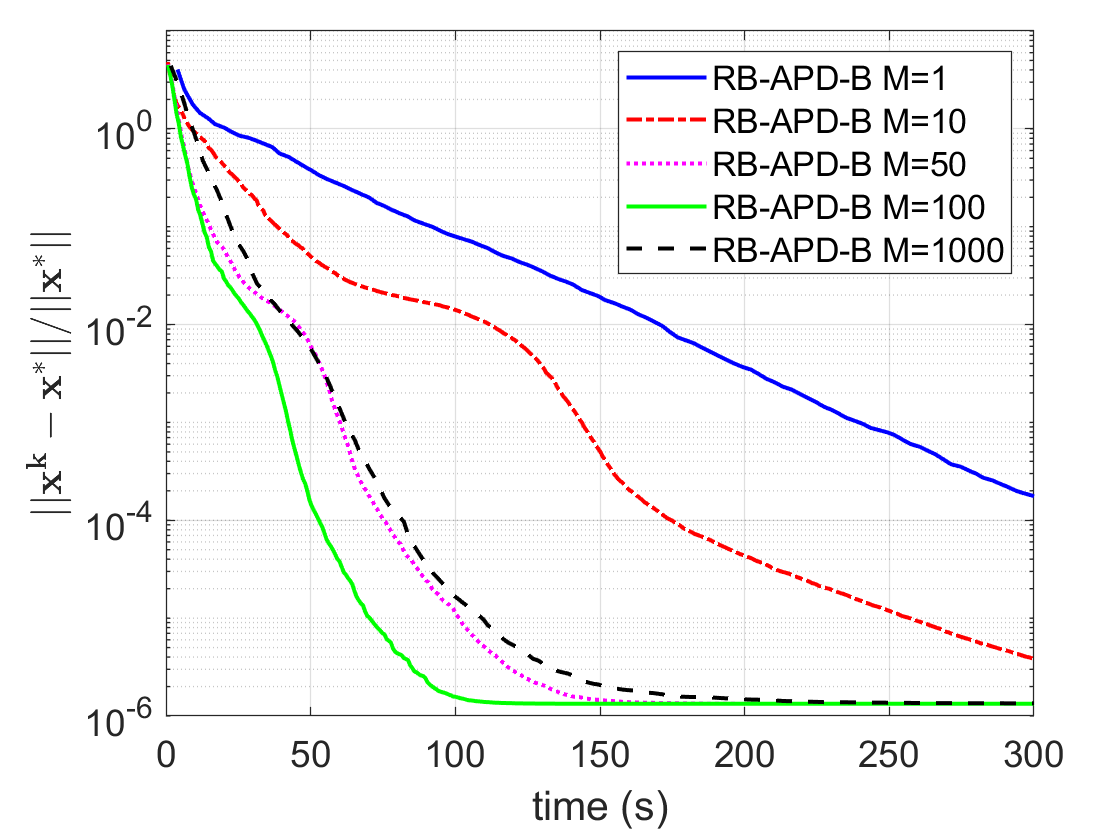}}
    \caption{Performance of \texttt{RB-APD-B} for various number of primal blocks.}
    \label{fig:block}
    \vspace*{-2mm}
\end{figure}

{\bf Comparison with other methods.} The comparison of our method against others is  
{demonstrated} in Figure \ref{fig:all}. In this experiment, we select the step-size of \citep{he2015mirror} constant while other methods enjoy a backtracking linesearch. All the methods use the same initial step-size of $\tau=10^{-2}$ and parameter $\eta=0.7$. We observe that {our 
algorithm \texttt{RB-APD-B} 
is competitive against the state-of-the-art methods we compare}. The reason is that our method potentially benefits from low per-iteration complexity due to using a block-coordinate approach as well as larger block-specific step-sizes due to exploring local coordinate-wise Lipschitz \sa{constants} via backtracking method. 
{\texttt{Mirror-prox} by~\cite{he2015mirror} uses constant step-size depending on the global Lipschitz parameter and 
employs full gradient 
$\grad_{\bx}\Phi$ to update $\bx$ while {both} \texttt{APDB}~\cite{hamedani2021primal} and \texttt{PEGM}~\cite{malitsky2018first} 
{employ} backtracking line search, but similarly use full gradient $\grad_{\bx}\Phi$ to update $\bx$.}\vspace*{-2mm}
\begin{figure}[htb]
    \centering
    \hspace*{-3mm}\subfloat[\texttt{svmguide1} dataset]{
    \includegraphics[scale=0.15]{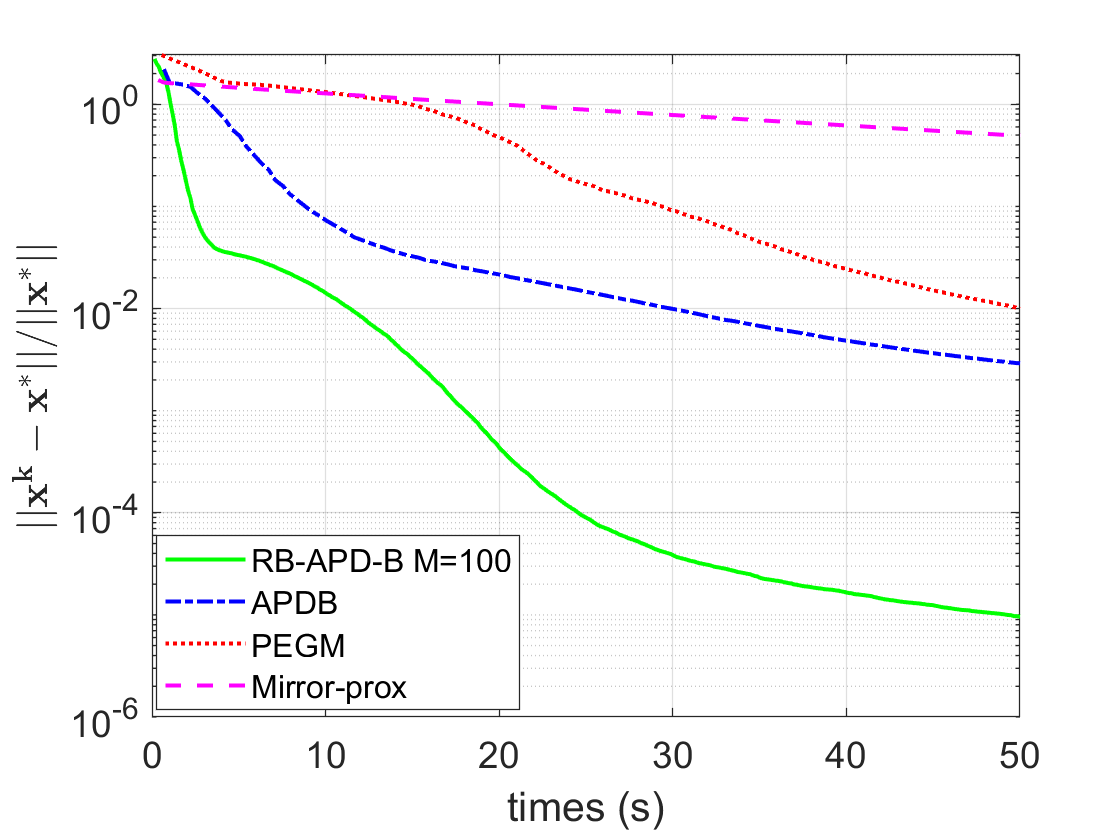}}\hspace*{-4mm}
    \subfloat[\texttt{MNIST} dataset]{
    \includegraphics[scale=0.15]{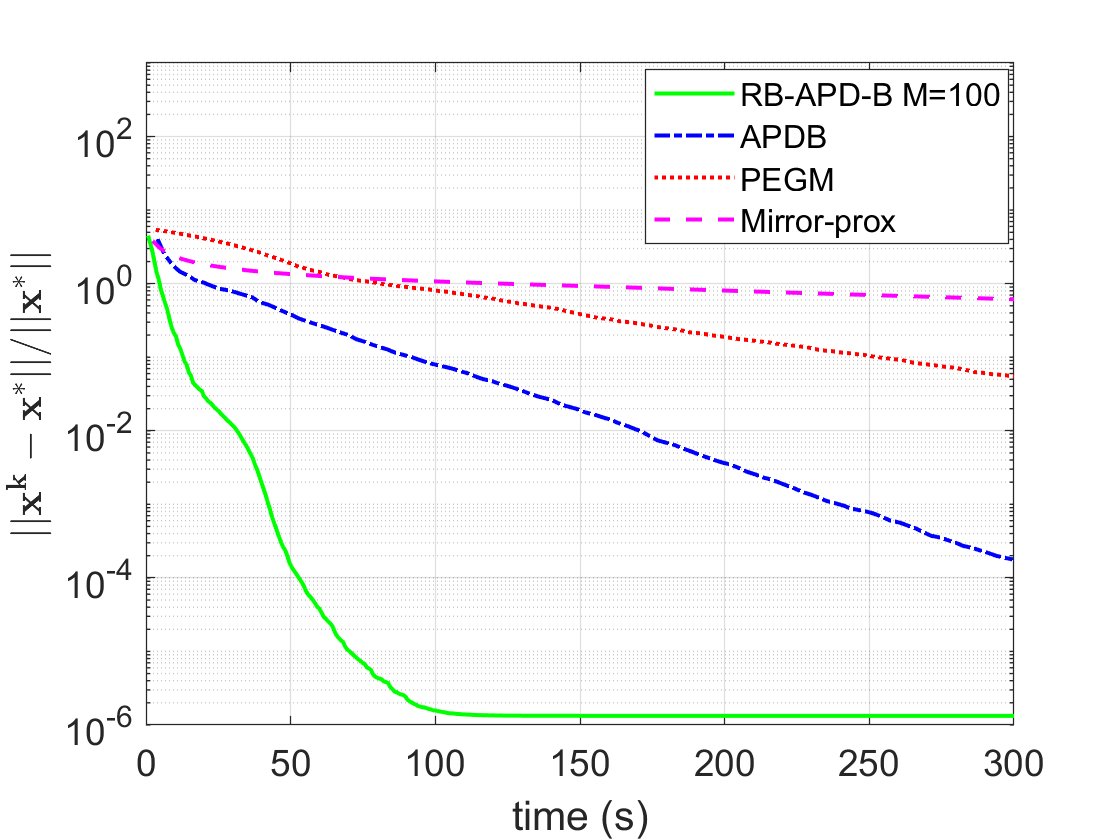}}
    \caption{Comparing the performance of \texttt{RB-APD-B} with other methods for different datasets.}
    \label{fig:all}
    \vspace*{-2mm}
\end{figure}
\vspace*{-2mm}

\subsection{Quadratic constrained quadratic programming (QCQP)}
In this subsection, we compare our method against \texttt{APDB} by~\cite{hamedani2021primal} and \texttt{BLALM} by~\cite{xu2021first} on randomly generated QCQP problems with various dimensions. In fact, we consider the following QCQP problem 
\begin{align*}
&\min_{\bx\in X} ~ f(\bx)\triangleq \frac{1}{2}\bx^\top A_0\bx+b_0^\top \bx\\
&\hbox{s.t.} \quad g(\bx)\triangleq \frac{1}{2}\bx^\top A_1\bx+b_1^\top \bx -c_1\leq 0, 
\end{align*}
where $X = [-1, 1]^m$, \sa{and $A_0, A_1\in \reals^{m\times m}$ are symmetric} positive semidefinite matrices generated
randomly with a block diagonal structure, \sa{$b_0, b_1\in \reals^{m}$} are generated randomly with elements drawn from standard Gaussian distribution, and $c_1 \in \reals$ is generated randomly with elements drawn
from uniform distribution over $[0,1]$.

The goal of this experiment is to examine the effect of the dimension of the primal-variable ($m$) on the runtime of the proposed method and compare it with existing algorithms. To this end, we fix the termination criteria as $\max\{\abs{f(\bx_k)-f(\bx^*)},[g(\bx_k)]_+\}/m\leq \epsilon$ and increase parameter $m$ to compare the running time of algorithms. In particular, we let $\epsilon=10^{-6}$ and $m=10^3i$ for $i\in\{1,3,5,7,9\}$. The plot is shown in Figure \ref{fig:qcqp} and we observe that \texttt{RB-APD-B} outperforms the other two methods and their gap increases as $m$ increases.

\begin{figure}[htb]
\centering
  \includegraphics[scale=0.16]{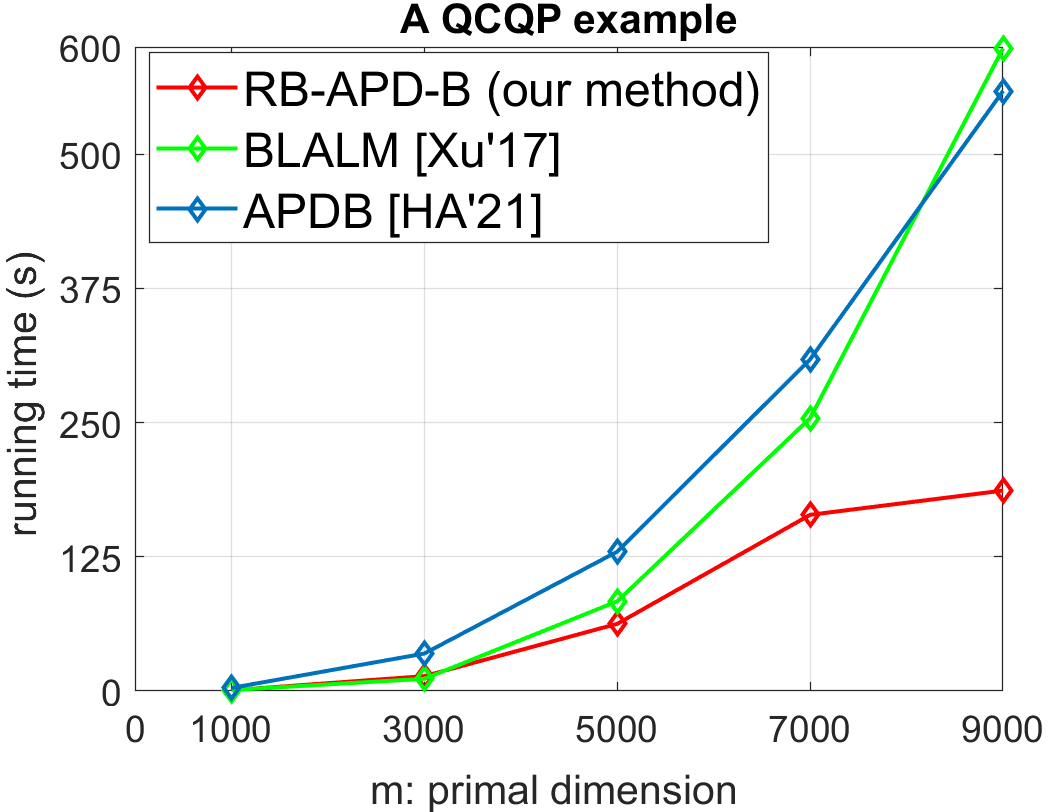}
  \caption{Comparing the effect of $m$ on the running time.}
  \label{fig:qcqp}
\end{figure}

\section{Concluding Remarks}
{The 
step-sizes for \rev{a typical first-order method 
are} selected based on the global Lipschitz constant of the gradient map to guarantee convergence. However, these constants may not be readily available in practice; and even if they are known, they pottentially lead to conservative step-sizes. We developed a novel randomized block coordinate primal-dual method equipped with backtracking line-search \sa{technique} to solve large-scale SP problems. The method can contend with \rev{non-bilinear, non-separable coupling functions} 
$\Phi(\bx,y)$ possibly with multiple primal blocks. At each iteration, a primal block is randomly selected and updated, following a dual update with a momentum term involving $\grad_y\Phi$.} 

We showed that for convex-concave setting, the proposed method achieves $\cO(M/k)$ convergence rate in the expected primal-dual gap. 
Furthermore, {assuming $\Phi(\bx,y)$ is 
strongly convex in $\bx$ and 
affine 
in $y$, 
{our method} enjoys a faster rate of $\cO(M/k^2)$ in terms of 
$\bE[\norm{\bx_k-\bx^*}^2]$ and $\bE[F(\bx^k)-F(\bx^*)]$, where $F(\cdot)$ denotes the primal function.} To the best of our knowledge, our proposed method is the only randomized block-coordinate primal-dual algorithm that can handle the general SP problems {as in \eqref{eq:original-problem}} and our rate results are optimal for this class. Furthermore, our method {avoids step-size selection issues related to the use of global Lipschitz constants through employing backtracking line-search scheme we developed. Indeed, the proposed line-search scheme tailored for 
$\texttt{RB-APD}$ help us alleviate the burden of knowing the global Lipschitz constants by estimating them locally.}


\printbibliography
\newpage

\appendix
\onecolumn
\setcounter{section}{0}
\setcounter{theorem}{0}

\section{SUPPORTING LEMMAS AND DEFINITIONS}\label{append}

\rev{\bf Notation:} In the rest, $\bE[\cdot]$ denotes the expectation 
operation and the conditional expectation is denoted by $\bE^{k}[\cdot]\triangleq\bE\left[\cdot\mid \rev{\cF_{k}}\right]$, where \rev{$\mathcal F_{k}\triangleq\sigma\big(\{i_0,\hdots,\rev{i_{k-1}}\}\big)$} is the $\sigma$-algebra generated by $\{i_0,\ldots,i_{k-1}\}$ for $k\geq 1$. Moreover, \rev{given a 
diagonal matrix $\cA=\diag([a_i]_{i\in\cM})$ for some $\{a_i\}_{i\in\cM}\subset\reals_{++}$,}
for any $\bx\in\cX$ and $\bar\bx\in\dom f$, we define
\begin{align*}
    \bD^{\cA}_{\cX}(\bx,\bar\bx)\triangleq\sum_{i\in\cM}a_i\bD_{\cX_i}(x_i,\bar{x}_i).
\end{align*}
For any $\bdelta\in\cX^*$, we define $\norm{\bdelta}_{*,\cA}\triangleq\sqrt{\sum_{i\in\cM}a_i\norm{\delta_i}_{\cX_i^*}^2}$.

Note that for any $y\in{\dom h}$, {$\bar{\bx}\in\dom f$} \na{and $i\in\cM$}, \eqref{eq:Lxx} implies that
{
\begin{align}
0 \leq {\Phi(\bar{\bx}+U_iv,y)- \Phi(\bar{\bx},y)}-\fprod{\grad_{x_i}\Phi(\bar{\bx},y),~v} \leq\tfrac{1}{2}L_{x_ix_i}\norm{v}_{\na{\cX_i}}^2,  \label{eq:Lxx_bound}
\end{align}}%
{for all $v\in\cX_i$ such that $\bar{\bx}+U_i v\in\dom f$}. {Similarly \eqref{eq:Lyy} and concavity of {$\Phi(\bx,\cdot)$} imply that for any {$\bx\in\dom f$, the following inequality holds for all $y,\bar{y}\in\dom h$:}
{
\begin{align}
0\geq \Phi(\bx,y) - \Phi(\bx,\bar{y})-\fprod{\grad_y\Phi(\bx,\bar{y}),~y-\bar{y}} \geq -\tfrac{1}{2}L_{yy}\norm{y-\bar{y}}_\cY^2. 
\label{eq:Lyy_bound}
\end{align}}}

\eh{Next, we define a test function $C^k(\bx,\by)$ for the iteration $k\geq 0$ to accept or reject the \sa{assignment of a} given point $(\bx,y)$ \sa{to $(\bx^{k+1},y^{k+1})$}. 
\begin{defn}
For any $k\geq 0$, given \rev{$\tilde\tau^k,\sigma^k,\theta^k>0$, 
\sa{let} $\bT^k=\diag\left(\big[\frac{1}{\tau_i^k}\big]_{i\in\cM}\right)$ such that $\tau_i^k\triangleq \big(\frac{1}{M}(\mu_{i}+\frac{1}{\tilde{\tau}^k})-\mu_{i}\big)^{-1}$ for $i\in\cM$.}
We define
{\small
\begin{align}\label{eq:test-function-B}
C^k(\bx,y)\triangleq \nonumber&M\rev{\Big(\Phi(\bx,y)-\Phi(\bx^k,y)-\fprod{\grad_\bx\Phi(\bx^k,y),~\bx-\bx^k}\Big)} +\frac{M}{2\alpha^{k+1}}\norm{\grad_y\Phi(\bx,y)-\grad_y\Phi(\bx^k,y)}_{\cY^*}^2\nonumber\\
&+\frac{M}{2\beta^{k+1}}\norm{\grad_y\Phi(\bx^k,y)-\grad_y\Phi(\bx^k,y^k)}^2_{\cY^*}-M\bD_\cX^{\bT^k}(\bx,\bx^k) -\Big(\frac{1}{\sigma^k}-\theta^k M(\alpha^k+\beta^k)\Big)\bD_{\cY}(y,y^k) 
 \end{align}}%
{for some positive sequence $\{\alpha^k,\beta^k\}$.}
\end{defn}
Indeed, one can easily verify that $C^k_*=C^k(\bx^{k+1},y^{k+1})$ for $\{\alpha^k,\beta^k\}$ sequence defined as in \texttt{RB-APD-B} algorithm.}

\begin{lemma}
\label{lem_app:prox}
Let $\cX$ be a finite dimensional normed vector space with norm $\norm{.}_{\cX}$, $f:\cX\rightarrow\reals\cup\{+\infty\}$ be a closed convex function with convexity modulus $\mu\geq 0$ with respect to $\norm{.}_{\cX}$, and $\bD:\cX\times\cX\rightarrow\reals_+$ be a Bregman distance function corresponding to a strictly convex function $\phi:\cX\rightarrow\reals$ that is differentiable on an open set containing $\dom f$. Given $\bar{x}\in\dom f$ and $t>0$, let
\begin{align}
\label{eq_app:prox}
x^+=\argmin_{x\in\cX} f(x)+t \bD(x,\bar{x}).
\end{align}
Then for all $x\in\cX$, the following inequality holds:
\begin{eqnarray}
f(x)+t\bD(x,\bar{x})\geq f(x^+) + t\bD(x^+,\bar{x})+t \bD(x,x^+)+\frac{\mu}{2}\norm{x-x^+}_{\cX}^2. \label{eq_app:bregman}
\end{eqnarray}
\end{lemma}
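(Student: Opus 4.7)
The plan is to carry out a standard three-point argument for proximal Bregman minimizers. The proof rests on three ingredients: (i) the first-order optimality condition for $x^+$, (ii) the subgradient inequality for $f$ strengthened by its convexity modulus $\mu$, and (iii) the classical Bregman three-point identity.

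First, I would write down the first-order optimality condition for the strongly convex program defining $x^+$ in \eqref{eq_app:prox}. Since $\bD(\cdot,\bar{x})$ is differentiable with $\nabla_1 \bD(x^+,\bar{x}) = \nabla \phi(x^+) - \nabla \phi(\bar{x})$, the optimality of $x^+$ yields
\begin{equation*}
-t\bigl(\nabla\phi(x^+)-\nabla\phi(\bar{x})\bigr)\in\partial f(x^+).
\end{equation*}
Because $f$ has convexity modulus $\mu\geq 0$ w.r.t.\ $\norm{\cdot}_{\cX}$, the strong subgradient inequality applied with the above subgradient gives, for any $x\in\cX$,
\begin{equation*}
f(x)\geq f(x^+) - t\bigl\langle \nabla\phi(x^+)-\nabla\phi(\bar{x}),\ x-x^+\bigr\rangle + \tfrac{\mu}{2}\norm{x-x^+}_{\cX}^2.
\end{equation*}

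Next, I would invoke the three-point identity for Bregman distances. A direct expansion of the definition $\bD(u,v)=\phi(u)-\phi(v)-\fprod{\nabla\phi(v),u-v}$ for the three pairs $(x,\bar{x})$, $(x^+,\bar{x})$ and $(x,x^+)$ yields
\begin{equation*}
\bD(x,\bar{x}) - \bD(x^+,\bar{x}) - \bD(x,x^+) = \bigl\langle \nabla\phi(x^+)-\nabla\phi(\bar{x}),\ x-x^+\bigr\rangle.
\end{equation*}
This identity precisely matches, after multiplication by $t$, the linear term appearing in the subgradient inequality.

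Finally, I would add $t\bD(x,\bar{x})$ to both sides of the subgradient inequality and substitute the three-point identity to eliminate the inner product, producing
\begin{equation*}
f(x)+t\bD(x,\bar{x})\geq f(x^+)+t\bD(x^+,\bar{x})+t\bD(x,x^+)+\tfrac{\mu}{2}\norm{x-x^+}_{\cX}^2,
\end{equation*}
which is \eqref{eq_app:bregman}. There is no real obstacle here: the only subtle point is verifying that $\nabla_1\bD(\cdot,\bar{x})$ is well-defined at $x^+$, which is guaranteed by the hypothesis that $\phi$ is differentiable on an open set containing $\dom f$ and that $x^+\in\dom f$ (which in turn follows because the minimization in \eqref{eq_app:prox} is over an objective that is $+\infty$ outside $\dom f$).
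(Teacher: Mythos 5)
Your proof is correct and takes essentially the same route as the paper's: the first-order optimality condition $t(\grad\phi(\bar{x})-\grad\phi(x^+))\in\partial f(x^+)$, the subgradient inequality strengthened by the convexity modulus $\mu$, and the Bregman three-point identity to convert the inner product into $\bD(x,\bar{x})-\bD(x^+,\bar{x})-\bD(x,x^+)$. The only difference is that you spell out the three-point identity explicitly, whereas the paper states that the result ``immediately follows'' from the subgradient inequality.
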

\begin{proof}
This result is a trivial extension of Property 1 in~\cite{Tseng08_1J}. The first-order optimality condition for \eqref{eq_app:prox} implies that
$0\in\partial f(x^+)+ t\grad_x \bD(x^+,\bar{x})$ -- where $\grad_x \bD$ denotes the partial gradient with respect to the first argument. Note that for any $x\in \dom f$, we have $\grad_x \bD(x,\bar{x})=\grad \phi(x)-\grad \phi(\bar{x})$. Hence,
$t(\grad \phi(\bar{x})-\grad \phi({x}^+))\in\partial f(x^+).$
Using the convexity inequality for $f$, we get
\begin{eqnarray*}
f(x)\geq f(x^+)+t\fprod{\grad \phi(\bar{x})-\grad \phi({x}^+),~x-x^+}+\frac{\mu}{2}\norm{x-x^+}_{\cX}^2.
\end{eqnarray*}
The result in \eqref{eq_app:bregman} immediately follows from this inequality.
\end{proof}
\begin{lemma}\cite{Robbins71}\label{lem:supermartingale}
{Let $(\Omega, \cF,\mathbb{P})$ be a probability space, and for each $k\geq 0$ suppose $a^k$ and $b^k$ are finite nonnegative $\cF^k$-measurable random variables where $\{\cF^k\}_{k\geq 0}$ is a sequence sub-$\sigma$-algebras of $\cF$ such that $\cF^k\subset\cF^{k+1}$ for $k\geq 0$. If $\mathbb{E}[a^{k+1}|\cF^k]\leq a^k-b^k$ \sa{for all $k\geq 0$}, then
then $a=\lim_{k\rightarrow \infty}a^k$ exists almost surely, and $\sum_{k=0}^\infty b^k<\infty$.}
\end{lemma}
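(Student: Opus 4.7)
The plan is to reduce the claim to Doob's nonnegative supermartingale convergence theorem by constructing an auxiliary process that absorbs the cumulative deficit $\sum_j b^j$. Concretely, I would define
\[
M^k \triangleq a^k + \sum_{j=0}^{k-1} b^j, \qquad k\geq 0,
\]
with the convention that the empty sum is zero. Since each $a^j$ and $b^j$ is nonnegative and $\cF^j$-measurable and the filtration is nondecreasing, $M^k\geq 0$ is $\cF^k$-measurable, and $\{M^k\}_k$ is adapted to $\{\cF^k\}_k$.

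The first key step is to verify that $\{M^k\}_k$ is a supermartingale. Since $\sum_{j=0}^{k} b^j$ is $\cF^k$-measurable, the hypothesis $\mathbb{E}[a^{k+1}\mid\cF^k]\leq a^k-b^k$ yields
\[
\mathbb{E}[M^{k+1}\mid\cF^k] \;=\; \mathbb{E}[a^{k+1}\mid\cF^k]+\sum_{j=0}^{k}b^j \;\leq\; (a^k-b^k)+\sum_{j=0}^{k}b^j \;=\; M^k \quad \text{a.s.}
\]
Taking expectations gives $\mathbb{E}[M^k]\leq\mathbb{E}[M^0]=\mathbb{E}[a^0]<\infty$, assuming $a^0$ is integrable; this mild assumption is what is implicitly needed to make sense of the conditional expectation in the hypothesis and propagates along the chain of inequalities via induction.

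Next I would invoke Doob's supermartingale convergence theorem to conclude that $M^k$ converges almost surely to some nonnegative random variable $M^\infty$ with $\mathbb{E}[M^\infty]<\infty$. To extract both claimed conclusions, observe that $S^k\triangleq\sum_{j=0}^{k-1}b^j$ is monotone nondecreasing in $k$, so $S^k\uparrow S^\infty\in[0,\infty]$ almost surely. The monotone convergence theorem together with $\mathbb{E}[S^k]\leq\mathbb{E}[M^k]\leq\mathbb{E}[a^0]<\infty$ yields $\mathbb{E}[S^\infty]<\infty$, hence $\sum_{j=0}^\infty b^j=S^\infty<\infty$ a.s. Then $a^k=M^k-S^k$ converges almost surely to the finite limit $M^\infty-S^\infty$, which is the desired existence of $a=\lim_{k\to\infty}a^k$.

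The only genuine subtlety is the integrability step used to justify Doob's theorem and the monotone convergence argument; the remaining manipulations are straightforward bookkeeping. I do not anticipate any substantive obstacle beyond fixing this integrability convention. In fact, the same construction yields the slightly stronger quantitative bound $\mathbb{E}\!\left[\sum_{k=0}^\infty b^k\right]\leq\mathbb{E}[a^0]$, which can be useful in applications where one wants an explicit control on the noise sequence, and it readily extends to Robbins and Siegmund's more general setting allowing an additional summable nonnegative perturbation on the right-hand side of the recursion.
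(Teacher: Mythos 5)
The paper itself contains no proof of this lemma: it is quoted directly from Robbins and Siegmund \cite{Robbins71} (it is the special case of their almost-supermartingale convergence theorem in which the multiplicative and additive perturbation sequences are set to zero), so there is no internal argument to compare yours against. Your reduction --- absorbing the accumulated deficit into $M^k = a^k + \sum_{j=0}^{k-1} b^j$, verifying the supermartingale property, invoking Doob's nonnegative supermartingale convergence theorem, and then separating the two conclusions via monotonicity of $S^k$ and monotone convergence --- is precisely the classical proof of this special case, and it is correct as written under the integrability convention you flag. One refinement is worth making explicit: integrability of $a^0$ is not needed to make sense of the hypothesis, since conditional expectations of nonnegative random variables are always well defined (possibly infinite), and it is not needed for the conclusion either. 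Because $a^0$ is a.s.\ finite and $\{a^0\le c\}\in\cF^0\subseteq\cF^k$ for every $k$, you may multiply the recursion through by $\ind{\{a^0\le c\}}$: the truncated processes $a^k\,\ind{\{a^0\le c\}}$ and $b^k\,\ind{\{a^0\le c\}}$ satisfy the same hypothesis with $\mathbb{E}\big[a^0\,\ind{\{a^0\le c\}}\big]\le c<\infty$, so your argument yields both conclusions a.s.\ on $\{a^0\le c\}$, and letting $c\to\infty$ along the integers exhausts $\Omega$ up to a null set. With this localization your proof requires no assumption beyond those stated in the lemma, while still retaining the quantitative bound $\mathbb{E}\big[\sum_{k=0}^\infty b^k\big]\le\mathbb{E}[a^0]$ whenever the right-hand side is finite --- which is the situation in the paper's application, where the lemma is applied with $a^k=(m-1)H^k(\bz^\#)+R^k(\bz^\#)$.
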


\begin{lemma}\label{lem:inner-w}
\rev{Given a 
diagonal matrix $\cA=\diag([a_i]_{i\in\cM})$ for some $\{a_i\}_{i\in\cM}\subset\reals_{++}$, and an arbitrary sequence} 
$\{\bdelta^k\}_{k\geq 0}\subset\rev{\cX_*}$, \rev{let $\{\bv^k\}_{k\geq 0}\subset\cX$ be 
such that} 
$\bv^{k+1}\triangleq\argmin_{\bx\in\cX}$ $\{-\fprod{\bdelta^k,\bx}+\bD_\cX^{\cA}(\bx,\bv^k)\}$. Then for all $k\geq 0$ and $\bx\in\cX$, $$\fprod{\bdelta^k,\bx-\bv^k}\leq \bD_\cX^{\cA}(\bx,\bv^k)-\bD_\cX^{\cA}(\bx,\bv^{k+1})+\frac{1}{2}\norm{\bdelta^k}_{*,\cA^{-1}}^2.$$
\end{lemma}
\begin{proof}
Since $\bv^{k+1}$ computation is separable in 
$i\in\cM$, one can apply Lemma \ref{lem_app:prox} for each coordinate 
to obtain a bound for $\fprod{\bdelta^k,\bx-\bv^{k+1}}$. Then we have that
{\footnotesize
\begin{align*}
\fprod{\bdelta^k,\bx-\bv^k}&=\fprod{\bdelta^k,\bx-\bv^{k+1}}+\fprod{\bdelta^k,\bv^{k+1}-\bv^k}\nonumber\\
&\leq \bD_\cX^{\cA}(\bx,\bv^k)-\bD_\cX^{\cA}(\bx,\bv^{k+1})-\bD_\cX^{\cA}(\bv^{k+1},\bv^k)+\fprod{\bdelta^k,\bv^{k+1}-\bv^k}\nonumber\\
&\leq \bD_\cX^{\cA}(\bx,\bv^k)-\bD_\cX^{\cA}(\bx,\bv^{k+1})+\frac{1}{2}\norm{\bdelta^k}_{*,\cA^{-1}}^2,
\end{align*}}%
where in the last inequity we used 
$\fprod{\delta^k_i,v_i^{k+1}-v^k_i}\leq\tfrac{a_i}{2}\norm{v_i^{k+1}-v^k_i}_{\cX_i}^2+\tfrac{1}{2a_i}\norm{\delta_i^k}_{\cX_i^*}^2$ for $i\in\cM$ together with $\bD_\cX^\cA(\bv^{k+1}-\bv^k)\geq \tfrac{1}{2}\norm{\bv^{k+1}-\bv^k}_\cA^2$.
\end{proof}

\section{Proof of Convergence}
\label{sec:proof}
Before proving the asymptotic convergence of the iterate sequence and related rate results, we restate the theorems with more details here for completeness. We divide the proof of theorems in three parts: (i) In section \ref{sec:proof-step}, we analyze the proposed backtracking method to derive some key inequalities; 
(ii) in section \ref{sec:proof-asymp}, we show asymptotic convergence of the iterate sequence generated by \texttt{RB-APD} and \texttt{RB-APD-B}; (iii) finally, in section \ref{sec:proof-rate} we establish the convergence rate of the proposed methods. 

\begin{remark}
Note that when the problem is strongly convex, we assume that $L_{yy}=0$ which means that $\Phi(\bx,\cdot)$ is affine for any $\bx\in\cX$. In this scenario, \rev{let $\bx^*$ be the unique solution to $\min\{F(\bx):\ \bx\in\cX\}$ for $F$ defined in \eqref{eq:primal_problem}.} \sa{It} is desirable to provide convergence guarantees for \rev{computing an $\epsilon$-optimal point $\bx_\epsilon$. In this context, $\epsilon$-optimality can be defined either in function values, i.e., $\bE[F(\bx_\epsilon)-F(\bx^*)]\leq \epsilon$, or 
in the solution space, i.e., $\bE[\norm{\bx_\epsilon-\bx^*}_\cX^2]\leq \epsilon$.}
\end{remark}

We begin by restating the results for \rev{\texttt{RB-APD-B}, stated in Algorithm \ref{alg:RBPDB}, under merely convex and strongly convex settings separately.}
\begin{theorem}\label{thm:backtrack} 
Suppose \sa{Assumptions~\ref{assum} and~\ref{rem:bregman} hold}. Let $\delta\in[0,1)$, $c_\alpha>0$ and $c_\beta\geq 0$ are chosen as stated below. Define \rev{$\munderbar{\mu}=\min_{i\in\cM}\mu_i$}, $\bar\mu\triangleq \max_{i\in\cM}\mu_i$, \eh{$\overline{L}_{xx}=\max_{i\in\cM}L_{x_ix_i}$}, \eh{$\overline{L}_{yx}=\max_{i\in\cM}L_{yx_i}$}, and
\eh{
\begin{align}
	\label{eq:tauhat_bound-1,2}
	&\Psi_1\triangleq\frac{c_\alpha \bar{b}}{2\gamma_0\overline L_{yx}^2}\zeta,\quad \Psi_2\triangleq \frac{\sqrt{c_\beta(1-(M(c_\alpha+c_\beta)+\delta))}}{\gamma_0\sqrt{M}L_{yy}}, \\
	&\zeta\triangleq -1+\sqrt{1+\frac{4(1-\delta)\gamma_0}{Mc_\alpha }\frac{\overline L_{yx}^2}{\bar{b}^2}},\quad 
	\bar{b}\triangleq \overline L_{xx}+\frac{(1-\delta)(M-1)\bar\mu}{M}.
\end{align}}%

For any given $(\bx_0,y_0)\in\dom f\times \dom h$ and arbitrary $\gamma^0>0$, $\bar{\tau}\in\left(0,\tfrac{1}{\bar{\mu}(M-1)}\right)$, {\texttt{RB-APD-B}}, stated in Algorithm \ref{alg:RBPDB}, is well-defined, i.e., the number of backtracking 
\sa{steps} is finite and bounded by {$1+\log_{1/\eta}(\frac{\bar{\tau}}{\Psi})$} uniformly for $k\geq 0$ for some $\Psi>0$. Let $\{\bx^k,y^k\}_{k\geq 0}$ denote the iterate sequence generated by {\texttt{RB-APD-B}}. 
For all $K\geq 1$, let $T^K{\triangleq}\sum_{k=0}^{K-1}t^k$ and 
\begin{align*}
\bar{\bx}^K=\frac{1}{T^K+M-1}\Big(\sum_{k=0}^{K-2}(Mt^k-(M-1)t^{k+1})\bx^{k+1}+Mt^{K-1}\bx^K\Big),\quad \bar{y}^K=\frac{1}{T^K}\break\sum_{k=0}^{K-1}t^ky^{k+1}
\end{align*}
for $\{t^k\}_{k\geq 0}$ such that $t^k=\sigma^k/\sigma^0$ for $k\geq 0$.
\\
\textbf{(Part I.)} Suppose $\underbar{$\mu$}=0$ and $\dom f\times \dom h$ is compact.
\sa{If $L_{yy}>0$, then
assume that} $M(c_\alpha+c_\beta)+\delta{\leq 1}$ {holds for some $c_\alpha,c_\beta>0$;}
\sa{otherwise, i.e., $L_{yy}=0$, assume that 
${M} c_\alpha+\delta{\leq 1}$ for some $c_\alpha>0$ and let $c_\beta=0$.}
For this setting, {$\Psi=\Psi_1$ if $L_{yy}=0$ and $\Psi=\min\{\Psi_1,\Psi_2\}$ if $L_{yy}>0$}; moreover, for all $K\geq 1$, the following bound on the expected primal-dual gap holds:
\begin{subequations}
\label{eq:general-rate-result}
\rev{\begin{align}
&\bE\left[\cG(\bar\bx^K,\bar y^K)\right]\leq \frac{1}{T^K}\left(\bar B_1+\bar B_2+\sup_{\bx\in\dom f}B_1(\bx)+\sup_{y\in\dom h}B_2(y)\right) \label{eq:rate-final}\\
&\bar B_1\triangleq \frac{(M-1)L_{\varphi_{\cX}}^2}{2}~\bE\Big[\sum_{k=0}^{+\infty}t^k\norm{\tilde\bx^{k+1}-\bx^k}^2_{\bT^k+\fM}\Big]<+\infty,\\
&\bar B_2\triangleq (M-1)\sup\{\cL(\bx^0,y)-\cL(\bx,y):\ (\bx,y)\in\dom f\times\dom h\}<+\infty,\\
&B_1(\bx)\triangleq M\bD_\cX^{(1+\frac{1}{M})\bT^0+\fM}(\bx,\bx^0),\\
&B_2(y)\triangleq \Big(\tfrac{1}{\sigma^0}+\theta^0(M-1)L_{yy}\Big)\bD_\cY(y,y^0),
\end{align}}%
\end{subequations}
\sa{with} $T^K 
=\Omega(K)$, implying $\cO(1/K)$ sublinear rate for $\bE\left[\cG(\bar\bx^K,\bar y^K)\right]$. \sa{Finally, if a saddle point for \eqref{eq:original-problem} exists and {$\delta>0$ is chosen}, then 
$\{(\bx^k,y^k)\}_{k\geq 0}$ converges to a saddle point almost surely.}\\ 
\textbf{(Part II.)} Suppose $\underbar{$\mu$}>0$ and $L_{yy}=0$; hence, $\Phi$ has the following form: $\Phi(\bx,y)=\fprod{g(\bx),y}$ for some $g:\cX\to\cY^*$ such that $\Phi$ is convex in $\bx$ on $\dom f$ for any $y\in\dom h$. Let $F(\bx)=\argmax_{y\in\cY}\cL(\bx,y)$; thus, $F(\bx)=f(\bx)+h^*(g(\bx))$ for $\bx\in\cX$.

\sa{Let $c_\alpha>0$ and $\delta\in[0,1)$ such that
${M} c_\alpha+\delta\in(0,1]$, and $c_\beta=0$.}
For this setting, $\Psi=\Psi_1$. 
If a saddle point $(\bx^*,y^*)$ for \eqref{eq:original-problem}
exists, then $\{(\bx^k,y^k)\}_{k\geq 0}$ converges to $\bx^*$ and $\{y^k\}$ has a limit point almost surely. Moreover,
\rev{
 \begin{align}
 \bE\left[\tfrac{\gamma^K}{2}\big\|{\bx^*-\bx^{K}}\big\|_{\cX}^2 +(1-M c_\alpha)\bD_{\cY}(y^*,y^{K})\right] \leq 
 &\tfrac{\gamma^0}{2}\norm{\bx^*-\bx^0}^2_{\cX}+\bD_{\cY}(y^*,y^0)\nonumber\\
 &+\sigma^0(M-1)\Big(\cL(\bx^0,y^*)-\cL({\bx}^*,y^*)\Big), \label{eq:xy-bound-II}
 \end{align}
 \begin{align}
& \bE\left[F(\bar\bx^K)-F(\bx^*)\right]\leq \frac{1}{T^K}\Big(\tfrac{\gamma^0}{2\sigma^0}\norm{\bx^*-\bx^0}^2_{\cX}+\tfrac{1}{\sigma^0}\sup_{y\in\dom h}\bD_{\cY}(y,y^0)+(M-1)\big(F(\bx^0)-F(\bx^*)\big)\Big),\label{eq:F-bound}
\end{align}}%
\rev{hold for all $K\geq 1$ \sa{with} both $\gamma^K=\Omega(K^2)$ and $T^K=\Omega(K^2)$; hence, ${\bE\big[}\norm{\bx^K-\bx^*}_{\cX}^2{\big]}=\cO(1/K^2)$ and 
$0\leq \bE\left[F(\bar\bx^K)-F(\bx^*)\right]\leq\cO(1/K^2)$.} \sa{Finally, if $\delta>0$, then any limit point of $\{(\bx^k,y^k)\}_{k\geq 0}$ is a saddle point almost surely.}
\end{theorem}

Next, we state the convergence rate result for \rev{\texttt{RB-APD}, displayed in Algorithm \ref{alg:RBPD}, in detail.}

\begin{theorem}\label{thm:main}
Suppose \sa{Assumptions~\ref{assum} and~\ref{rem:bregman} hold}, and define \rev{$\munderbar{\mu}\triangleq\min_{i\in\cM}\mu_i$} and $\bar\mu\triangleq \max_{i\in\cM}\mu_i$. \sa{Suppose $\gamma^0>0$ and $\bar{\tau}\in\left(0,\tfrac{1}{\bar{\mu}(M-1)}\right)$ are chosen such that \eqref{eq:initial_step_condition} holds for some $c_\alpha,c_\beta\geq 0$ and $\delta\in[0,1)$ as described in Theorem~\ref{thm:backtrack}.} Let
$\{\bx^k,y^k\}_{k\geq 0}$ be the iterate sequence
generated by 
{\texttt{RB-APD}},
and let $(\bar{\bx}^K,\bar{y}^K)$ and $T^K$ be defined for $K\geq 1$ as in Theorem~\ref{thm:backtrack}.

\textbf{(Part I.)} {Suppose $\underbar{$\mu$}=0$ and $\dom f\times \dom h$ is compact. The stepsize rule in Algorithm~\ref{alg:RBPD} implies that $\tilde{\tau}^k=\tilde{\tau}^0$, $\sigma^k=\sigma^0$ and $\theta^k=1$ for {all} $k\geq 0$; hence, $t^k=1$ for $k\geq 0$, implying $T^K=K$. Moreover, \eqref{eq:general-rate-result} holds for all $K\geq 1$.}
Finally, if a saddle point for \eqref{eq:original-problem} exists and
{$\delta>0$}, then 
$\{(\bx^k,y^k)\}_{k\geq 0}$ {almost surely} converges to a saddle point. 

\textbf{(Part II.)} \rev{Suppose $\underbar{$\mu$}>0$ and $L_{yy}=0$.}
If a saddle point for \eqref{eq:original-problem}
exists, then $\{\bx^k\}_{k\geq 0}$ converges to $\bx^*$ and $\{y^k\}$ has a limit point almost surely,\footnote{Since $\underbar{$\mu$}>0$, $\bx^*$ must be the unique $\bx$-coordinate of any saddle point.} \rev{where $\bx^*$ denotes the unique solution to $\min\{F(\bx):\ \bx\in\cX\}$.} 
\sa{Furthermore, both \eqref{eq:xy-bound-II} and \eqref{eq:F-bound} hold for all $K\geq 1$ such that} $\gamma^K=\Omega(K^2)$ and $T^K=\Omega(K^2)$; hence, ${\bE\big[}\norm{\bx^K-\bx^*}_{\cX}^2{\big]}=\cO(1/K^2)$ and 
$0\leq \bE\left[F(\bar\bx^K)-F(\bx^*)\right]\leq\cO(1/K^2)$. \sa{Finally, if \eqref{eq:initial_step_condition} holds with $\delta>0$,} then any limit point \sa{of $\{(\bx^k,y^k)\}_{k\geq 0}$} 
 is a saddle point almost surely.
\end{theorem}
\subsection{One-step analysis}\label{sec:proof_onestep}
To prove the results 
\rev{of Theorems~\ref{thm:backtrack} and~\ref{thm:main},} we first provide a one-step analysis in Lemma \ref{lem:one-step} to provide a bound on the progress of iterates in terms of the 
\rev{coupling} function $\cL$. This is the main building block of our convergence analysis. 

\begin{lemma}\label{lem:one-step}
Suppose Assumption~\ref{assum} holds, \rev{let
$\{\bx^k,y^k\}_{k\geq 0}$ be 
generated by the following recursion:
\begin{subequations}
\label{eq:generic_recursion}
\begin{align}
    &q^k\gets {M}(\grad_y \Phi(\bx^{k},{y^{k}})-\grad_y\Phi(\bx^{k-1},{y^{k-1}}))\label{eq:qk}\\
    &s^k\gets \grad_y \Phi(\bx^{k},{y^k})+\theta^{k}{q^k}\label{eq:sk}\\
    &y^{k+1}\gets \argmin_y h(y){-\fprod{s^k,y}}+\frac{1}{\sigma^k}\bD_\cY(y,y^k) \label{eq:y-subproblem-generic}\\
    &\tilde x_{i}^{k+1}\gets \argmin_x f_{i}(x)+\fprod{\grad_{x_{i}}\Phi(\bx^k,y^{k+1}),x} +{\frac{1}{\tau_{i}^k}\bD_{\cX_{i}}(x,x_{i}^k)},\quad \forall~i\in\cM \label{eq:tilde-problem-B}\\
    &\bx^{k+1}\gets \bx^k\nonumber\\
    &\mbox{Choose $i_k\in\cM$ uniformly at random}\nonumber\\
    &x_{i_k}^{k+1}\gets \tilde x_{i_k}^{k+1},\nonumber
\end{align}
\end{subequations}
for some 
positive parameters $\{\tau_i^k\}_{i\in\cM}$, $\sigma^k$ and $\theta^k$ for $k\geq 0$.\footnote{\texttt{RB-APD} and \texttt{RB-APD-B}, stated in Algorithm~\ref{alg:RBPD} and Algorithm~\ref{alg:RBPDB}, respectively, both satisfy this recursion for some primal-dual stepsize sequences.}} 
Then for \rev{any} 
\rev{$(\bx,y)\in\dom f\times\dom h$,}
\begin{align}
&\cL(\bx^{k+1},y)-\cL(\bx,y^{k+1})\leq Q^k(\bz) - R^{k+1}(\bz) + (M-1)(\theta^k H^k(\bz)-H^{k+1}(\bz)), \nonumber\\
& \quad + (M-1)\Big((1-\theta^k)\big(\cL(\bx^k,y)-\cL(\bx,y)\big)+\rev{(1-\theta^k)_+}L_{yy}\bD_{\cY}(y,y^k)\Big)+
C^k(\bx^{k+1},y^{k+1})+\cE^k(\bx), \label{eq:one-step-main}
\end{align}
holds for all $k\geq 0$ \rev{for any $\{\alpha^k\}\subset\reals_{++}$, and for any $\{\beta^k\}\subset\reals_{++}$ when $L_{yy}>0$ (and $\beta^k=0$ for all $k\geq 0$ when $L_{yy}=0$, defining $0^2/0=0$),} where \rev{$(\cdot)_+=\max\{\cdot,0\}$,} $Q^k(\cdot)$, $R^{k+1}(\cdot)$, $H^{k}(\cdot)$, and $\cE^k(\bx)$ are defined as follows:
{\small
\begin{subequations}\label{eq:lagrangian-B}
\begin{align}
& Q^k(\bz) \triangleq M\bD_\cX^{\bT^k}(\bx,\bx^k)+\frac{M-1}{2}\norm{\bx-\bx^k}_{\fM}^2+\frac{1}{\sigma^k}\bD_{\cY}(y,y^k)+\theta^k\fprod{r^k,y^k-y}\nonumber\\
& \quad \rev{+\frac{M\theta^k}{2\alpha^k}\norm{
\grad_y\Phi(\bx^k,y^{k})-\grad_y\Phi(\bx^{k-1},y^{k})}_{\cY^*}^2+\frac{M\theta^k}{2\beta^k}\norm{
\grad_y\Phi(\bx^{k-1},y^{k})-\grad_y\Phi(\bx^{k-1},y^{k-1})}^2_{\cY^*},} \label{eq:lagrangian-Q-B}\\
& R^{k+1}(\bz)\triangleq M\bD_\cX^{\bT^k}(\bx,\bx^{k+1})+\frac{M}{2}\norm{\bx-\bx^{k+1}}_{\fM}^2 +\frac{1}{\sigma^{k}}\bD_{\cY}(y,y^{k+1}) +\fprod{r^{k+1},y^{k+1}-y}  \nonumber \\
& \quad \rev{+\frac{M}{2\alpha^{k+1}}\norm{
\grad_y\Phi(\bx^{k+1},y^{k+1})-\grad_y\Phi(\bx^{k},y^{k+1})}_{\cY^*}^2+\frac{M}{2\beta^{k+1}}\norm{
\grad_y\Phi(\bx^{k},y^{k+1})-\grad_y\Phi(\bx^{k},y^{k})}^2_{\cY^*},} \\
& H^k(\bz)\triangleq  f(\bx^k)-f(\bx)+\Phi(\bx^k,y^{k})-\Phi(\bx,y), \label{eq:largrangian-H}\\
& \cE^k(\bx)\triangleq Mf(\bx^{k+1})-f(\btx^{k+1})-(M-1)f(\bx^k)-\fprod{\grad_\bx\Phi(\bx^k,y^{k+1}),\btx^{k+1}-M\bx^{k+1}+(M-1)\bx^k}\nonumber \\
&\quad +M\bD_\cX^{\bT^k}(\bx,\bx^{k+1})-(M-1)\bD_\cX^{\bT^k}(\bx,\bx^k)-\bD_\cX^{\bT^k}(\bx,\btx^{k+1})+M\bD_\cX^{\bT^k}(\bx^{k+1},\bx^k)\nonumber \\
&\quad -\bD_\cX^{\bT^k}(\btx^{k+1},\bx^k)+\frac{M}{2}\norm{\bx-\bx^{k+1}}^2_\fM-\frac{1}{2}\norm{\bx-\btx^{k+1}}^2_\fM-\frac{M-1}{2}\norm{\bx-\bx^k}^2_\fM,\label{eq:Ek}
\end{align}
\end{subequations}}%
where $C^k(\cdot,\cdot)$ 
\rev{defined} in \eqref{eq:test-function-B} and $r^k\triangleq \grad_y\Phi(\bx^k,y^k)-M\grad_y\Phi(\bx^{k-1},y^{k-1})$.  
\end{lemma}
\begin{proof}
\rev{We define an auxiliary sequence {$\{\btx^k\}_{k\geq 1}\subseteq\cX$} such that $\tilde x_i^{k+1}$ is defined as in~\eqref{eq:tilde-problem-B} for all $k\geq 0$.}
The auxiliary sequence {$\{\btx^k\}_{k\geq 1}$} is never actually computed in the implementation of 
\rev{\texttt{RB-APD} or \texttt{RB-APD-B},} and it is defined
for analyzing the convergence behavior of {$\{\bx^k\}_{k\geq 1}\subseteq\cX$}. 
For $k\geq0$, we apply Lemma~\ref{lem_app:prox} for \rev{both 
$\tilde{x}_i$-subproblem in \eqref{eq:tilde-problem-B} and the $y$-subproblem in \eqref{eq:y-subproblem-generic} (see Line~9 of \texttt{RB-APD} and also Line~11 of \texttt{RB-APD-B})},
we obtain two inequalities holding for any $y\in\cY$ and $\bx\in\cX$:
\begin{align}
&h(y^{k+1})-\fprod{s^k,~ y^{k+1}-y} \leq h(y)
+\frac{1}{\sigma^k}\Big[\bD_{\cY}(y,y^k)-{\bD_{\cY}(y,y^{k+1})}-\bD_{\cY}(y^{k+1},y^k)\Big], \label{eq:sc-h-B}\\
&f_i(\tilde{x}_i^{k+1})+\fprod{\grad_{x_i}\Phi(\bx^k,y^{k+1}),\tilde{x}_i^{k+1}-x_i}+\frac{\mu_i}{2} \rev{\norm{x_i-\tilde{x}_i^{k+1}}_{\cX_i}^2}\nonumber\\
&\quad \leq f_i(x)
+\frac{1}{\tau_i^k}\Big[\bD_{\cX_i}(x_i,x_i^k)-\bD_{\cX_i}(x_i,\tilde{x}_i^{k+1})-\bD_{\cX_i}(\tilde{x}_i^{k+1},x_i^k)\Big],\quad \forall i\in\cM, \label{eq:sc-f-B}
\end{align}
where 
\rev{$q^k$ and $s^k$ are defined in~\eqref{eq:qk} and~\eqref{eq:sk}, respectively.}
Note that by invoking \eqref{eq:Lxx_bound}, we may bound the inner product in \eqref{eq:sc-f-B} as follows:
\begin{align}\label{eq:f-prod-B}
\fprod{\grad_{\bx}\Phi(\bx^k,y^{k+1}),\tilde{\bx}^{k+1}-\bx} &= \fprod{\grad_{\bx}\Phi(\bx^k,y^{k+1}),{\bx}^k-\bx}+\fprod{\grad_{\bx}\Phi(\bx^k,y^{k+1}),\tilde{\bx}^{k+1}-\bx^k}\nonumber  \\ 
&\geq\Phi(\bx^k,y^{k+1})- \Phi(\bx,y^{k+1}) +\fprod{\grad_{\bx}\Phi(\bx^k,y^{k+1}),\tilde{\bx}^{k+1}-\bx^k}.
\end{align}
Next, we define two auxiliary sequences for $k\geq 0$:
\begin{subequations}
\begin{align}
\label{eq:Ak-B}
A^{k+1}&\triangleq \frac{1}{\sigma^k}\bD_{\cY}(y,y^k)-\frac{1}{\sigma^k}{\bD_{\cY}(y,y^{k+1})} -\frac{1}{\sigma^k}\bD_{\cY}(y^{k+1},y^k),\\
\label{eq:Bk-B}
{B}^{k+1}&\triangleq \bD_{\cX}^{\bT^k}(\bx,\bx^k)-\bD_{\cX}^{\bT^k}(\bx,\btx^{k+1})-\tfrac{1}{2}\norm{\bx-\btx^{k+1}}_{\fM}^2-\bD_\cX^{\bT^k}(\btx^{k+1},\bx^k).
\end{align}
\end{subequations}
Summing \eqref{eq:sc-f-B} over $i\in\cM$, and using \eqref{eq:f-prod-B} and \eqref{eq:Bk-B} leads to
\begin{align}\label{eq:f-prod-tilde}
f(\btx^{k+1})\leq f(\bx)+\Phi(\bx,y^{k+1})-\Phi(\bx^k,y^{k+1})+{B}^{k+1} -\fprod{\grad_{\bx}\Phi(\bx^k,y^{k+1}),\tilde{\bx}^{k+1}-\bx^k}.
\end{align}
For $k\geq 0$, let $$\Lambda_x^k\triangleq \Phi(\bx^{k+1},y^{k+1})-\Phi(\bx^k,y^{k+1})-\fprod{\grad_{\bx}\Phi(\bx^k,y^{k+1}),\bx^{k+1}-\bx^k},$$ $$E^k\triangleq f(\bx^{k+1})-\frac{1}{M}f(\btx^{k+1})-(1-\frac{1}{M})f(\bx^k)-\frac{1}{M}\fprod{\grad_\bx\Phi(\bx^k,y^{k+1}),\btx^{k+1}-M\bx^{k+1}+(M-1)\bx^k},$$ 
then dividing both sides of \eqref{eq:f-prod-tilde} by $M$ and rearranging the terms lead to
\begin{align}\label{eq:expect-f-B}
&f(\bx^{k+1})+\Phi(\bx^{k+1},y^{k+1})-f(\bx)-\frac{1}{M}\Phi(\bx,y^{k+1})\leq  \\
& \quad \left(1-\frac{1}{M}\right)\left(f(\bx^k)-f(\bx)+\Phi(\bx^k,y^{k+1})\right)+\frac{1}{M}{B}^{k+1}+\Lambda_x^k+E^k\rev{.}\nonumber
\end{align}

Next, multiplying \eqref{eq:sc-h-B} by $\frac{1}{M}$ and adding to \eqref{eq:expect-f-B}, then adding 
\rev{$\Phi(\bx^{k+1},y)/M$} to both sides, and rearranging the terms we obtain:
{
\begin{align}
\label{eq:sum-f-h-B}
 &\frac{1}{M}\big(f(\bx^{k+1}) -f(\bx) +\Phi(\bx^{k+1},y)\big) +\frac{1}{M}\big(h(y^{k+1})-h(y) -\Phi(\bx,y^{k+1})\big) \nonumber \\
&  \leq \frac{1}{M}\left(\Phi(\bx^{k+1},y)-\Phi(\bx^{k+1},y^{k+1})\right)+ \left(1-\frac{1}{M}\right)\big(f(\bx^k)-f(\bx)+\Phi(\bx^k,y^{k+1})\big) \nonumber \\
& \quad -\left(1-\frac{1}{M}\right)\big(f(\bx^{k+1}) -f(\bx) +\Phi(\bx^{k+1},y^{k+1})\big)  +\frac{1}{M}{B}^{k+1}+\frac{1}{M}A^{k+1}\nonumber\\
& \quad +\frac{1}{M}\fprod{s^k,y^{k+1}-y} +\Lambda_x^k+E^k\nonumber \\
& \leq \frac{1}{M}\fprod{\grad_y\Phi(\bx^{k+1},y^{k+1}),y-y^{k+1}}+ \rev{\underbrace{\left(1-\frac{1}{M}\right)\Big(f(\bx^k)-f(\bx)+\Phi(\bx^k,y^{k+1})-\Phi(\bx,y)\Big)}_{(*)}}
\nonumber\\
& \quad -\left(1-\frac{1}{M}\right)\rev{\Big(f(\bx^{k+1}) -f(\bx) +\Phi(\bx^{k+1},y^{k+1})-\Phi(\bx,y)\Big)} +\frac{1}{M}{B}^{k+1}+\frac{1}{M}A^{k+1}\nonumber \\
& \quad  +\frac{1}{M}\fprod{s^k,y^{k+1}-y}+\Lambda_x^k+E^k,
\end{align}}%
where in the last inequality, we use the concavity of $\Phi(\bx^{k+1},\cdot)$. 
Note that for any real number $a\in\reals$, from \eqref{eq:Lyy_bound} we have that for any $\bx\in\cX$, $\bar y,y\in\cY$
\begin{align}\label{eq:lip-conv-plus}
    \eh{a\Phi(\bx,\bar y)\leq a\Phi(\bx,y)+a\fprod{\grad_y \Phi(\bx,\bar y),\bar y-y}+\max\{a, 0\}\cdot\frac{L_{yy}}{2}\norm{y-\bar y}_{\cY}^2.}
\end{align}
Next, we provide an upper bound for 
$(*)$ in \eqref{eq:sum-f-h-B}  
{as follows:}
\begin{align}\label{eq:(a)-B}
(*)&{\leq \Big(1-\frac{1}{M}\Big)\Big(f(\bx^k)-f(\bx)+\Phi(\bx^k,y^{k})-\Phi(\bx,y)+\fprod{\grad_y\Phi(\bx^k,y^k),~y^{k+1}-y^k}\Big)} \nonumber \\
&\leq \Big(1-\frac{1}{M}\Big)\theta^kH^k(\bz)+\Big(1-\frac{1}{M}\Big)(1-\theta^k)\Big(f(\bx^k)-f(\bx)+\Phi(\bx^k,y)-\Phi(\bx,y)\Big) \nonumber \\
&\quad \eh{+\Big(1-\frac{1}{M}\Big)\frac{L_{yy}(1-\theta^k)_+}{2}\norm{y-y^{k}}_{\cY}^2}+ \Big(1-\frac{1}{M}\Big)\theta^k\fprod{\grad_y\Phi(\bx^k,y^k),y-y^k}\nonumber\\
&\quad +\Big(1-\frac{1}{M}\Big)\fprod{\grad_y\Phi(\bx^k,y^k),y^{k+1}-y},
\end{align}
where \rev{$(a)_+=\max\{a,0\}$ and} $H^k(\bz)= f(\bx^k)-f(\bx)+\Phi(\bx^k,y^{k})-\Phi(\bx,y)$; in the first inequality above we used the concavity \rev{of $\Phi(\bx^k,\cdot)$, and in the second inequality, we split the resulting bound into $\theta^k\geq 0$ and $1-\theta^k$ fractions, and used \eqref{eq:lip-conv-plus} for $a=1-\theta^k$, $\bx=\bx^k$ and $\bar y= y^k$.}

Now before combining \eqref{eq:(a)-B} with \eqref{eq:sum-f-h-B}, we first simplify the summation of all inner product terms coming from both inequalities. 
Recall that $s^k=\grad_y\Phi(\bx^k,y^k)+\theta^kq^k$ where $q^k = M(\grad_y\Phi(\bx^k,y^{k})-\grad_y\Phi(\bx^{k-1},y^{k-1}))$ {and} {we define} $r^k{\triangleq} q^k-(M-1)\grad_y\Phi(\bx^k,y^{k})$ {for all $k\geq 0$. Using these definitions, we can rearrange the sum of all inner products in \eqref{eq:(a)-B} and \eqref{eq:sum-f-h-B} as follows:}
\begin{align}\label{eq:inner-prod-B}
&\frac{1}{M}\fprod{\grad_y\Phi(\bx^{k+1},y^{k+1}),y-y^{k+1}}+\Big(1-\frac{1}{M}\Big)\theta^k\fprod{\grad_y\Phi(\bx^k,y^k),y-y^k} \nonumber \\
& \quad +\Big(1-\frac{1}{M}\Big)\fprod{\grad_y\Phi(\bx^k,y^k),y^{k+1}-y}+\frac{1}{M}\fprod{s^k,y^{k+1}-y} \nonumber \\
&=\fprod{\frac{1}{M}\grad_y\Phi(\bx^{k+1},y^{k+1})-\grad_y\Phi(\bx^k,y^k),y-y^{k+1}}\nonumber \\
& \quad +(1-\frac{1}{M})\theta^k\fprod{\grad_y\Phi(\bx^k,y^k),y-y^k} +\frac{\theta^k}{M}\fprod{ q^k,y^k-y}+\frac{\theta^k}{M}\fprod{q^k,y^{k+1}-y^k}\nonumber \\
&= -\frac{1}{M}\fprod{r^{k+1},y^{k+1}-y}+\frac{\theta^k}{M}\fprod{r^k,y^k-y}+ \frac{\theta^k}{M}\fprod{q^k,y^{k+1}-y^k}.
\end{align}
Hence, using \eqref{eq:(a)-B} and \eqref{eq:inner-prod-B} within \eqref{eq:sum-f-h-B}, and the definition of $\cL(\cdot,\cdot)$ we obtain the following inequality:
\begin{align}\label{eq:one-iter-bound-B}
&\frac{1}{M}\left(\cL(\bx^{k+1},y)-\cL(\bx,y^{k+1})\right) \leq \nonumber \\
&\quad  \frac{1}{M}\Big({B}^{k+1}+A^{k+1}-\fprod{r^{k+1},y^{k+1}-y}+\theta^k\fprod{r^k,y^k-y}+\theta^k\underbrace{\fprod{q^k,y^{k+1}-y^k}}_{(**)}\Big) \nonumber \\
& \quad +\Big(1-\frac{1}{M}\Big)\Big((1-\theta^k)\big(\cL(\bx^k,y)-\cL(\bx,y)\big)+\eh{(1-\theta^k)_+}L_{yy}\bD_\cY(y,y^k)
\Big) \nonumber\\
& \quad +\Big(1-\frac{1}{M}\Big)\Big(\theta^kH^k(\bz)-H^{k+1}(\bz)\Big)+\Lambda_x^k+E^k,
\end{align}
where we also used the fact that $\bD_\cY(y,\bar{y})\geq \frac{1}{2}\norm{y-\bar{y}}^2$.

In order to provide a bound for term $(**)$, we provide a general bound for $\fprod{q^k,y-y^k}$ for any $y\in\cY$ as follows. Let $p_x^k\triangleq \grad_y\Phi(\bx^k,y^{k})-\grad_y\Phi(\bx^{k-1},y^{k})$ and $p_y^k\triangleq \grad_y\Phi(\bx^{k-1},y^{k})-\grad_y\Phi(\bx^{k-1},y^{k-1})$. Such definitions immediately imply that $q^k=M(p_x^k+p_y^k)$, for all $k\geq 0$.  Hence,
using Young's inequality twice, once for $\fprod{p_x^k,y-y^k}$ and once for $\fprod{p_y^k,y-y^k}$ and the fact that $\rev{\bD_{\cY}(y,\bar{y})}\geq \frac{1}{2}\norm{y-\bar{y}}_\cY^2$, for any $y,\bar{y}\in\cY$, we obtain that for all $k\geq 0$,
\begin{equation}\label{eq:inner-q-B}
    \abs{\fprod{q^k,y-y^k}}\leq M(\alpha^k+\beta^k)\bD_\cY(y,y^k)+\frac{M}{2\alpha^k}\norm{p_x^k}^2_{\cY^*}+\frac{M}{2\beta^k}\norm{p_y^k}^2_{\cY^*},
\end{equation}
for any $\alpha^k,\beta^k>0$. Moreover, if $L_{yy}=0$, then $\norm{p_y^k}_{\cY^*}=0$; hence, $\abs{\fprod{q^k,y-y^k}}\leq \alpha^k\bD_\cY(y,y^k)+\frac{1}{2\alpha^k}\norm{p_x^k}^2_{\cY^*}$, for any $\alpha^k>0$. \rev{Therefore, first using \eqref{eq:inner-q-B} within \eqref{eq:one-iter-bound-B} for some $\alpha^k,\beta^k>0$ (with the exception of $\beta^k=0$ for when $L_{yy}=0$), then
multiplying both sides of the resulting inequality by $M$; and finally, adding and subtracting $M\bD_\cX^{\bT^k}(\bx,\bx^{k+1})-(M-1)\bD_\cX^{\bT^k}(\bx,\bx^k)-\bD_\cX^{\bT^k}(\bx,\btx^{k+1})+M\bD_\cX^{\bT^k}(\bx^{k+1},\bx^k)
-\bD_\cX^{\bT^k}(\btx^{k+1},\bx^k)$ and $\frac{M}{2}\norm{\bx-\bx^{k+1}}^2_\fM-\frac{1}{2}\norm{\bx-\btx^{k+1}}^2_\fM-\frac{M-1}{2}\norm{\bx-\bx^k}^2_\fM$ to the right-hand side, and rearranging the terms yield the desired result in \eqref{eq:one-step-main}.}
\end{proof}

\subsection{Backtracking Step-size Analysis}\label{sec:proof-step}
\begin{lemma}
\label{lem:stronger-step-condition-B}
{Suppose the sequence $\{[\tau_i^k]_{i\in\cM},\sigma^k,\theta^k\}_{k\geq 0}$ satisfy 
\eh{\eqref{eq:step-size-condition-pd-new1} and \eqref{eq:step-size-condition-pd-new}} for some positive \rev{$\{
\alpha^k\}_{k\geq 0}$}, nonnegative $\{\beta^k\}_{k\geq 0}$, and $\delta\in[0,1)$. \rev{Let $\{\bx^k,y^k\}$ be 
generated according to the recursion in~\eqref{eq:generic_recursion} 
using the given parameter sequence $\{[\tau_i^k]_{i\in\cM},\sigma^k,\theta^k\}$.} Then $\{\bx^k,y^k\}$ and $\{[\tau_i^k]_{i\in\cM},\sigma^k,\theta^k\}$ satisfy 
\eqref{eq:step-size-Ck-B} with the same \rev{$\{
\alpha^k,\beta^k\}$} and $\delta$.}
\end{lemma}
\begin{proof}
The proof is similar to Lemma 3.4. in \cite{hamedani2021primal}.
\end{proof}

\begin{lemma}\label{lem:stepsize-sequence}
\rev{Given arbitrary $\{\tilde{\tau}^k\}_{k\geq 0}\subset \reals_{++}$, and $\bar{\tau},\gamma^0>0$, let $\sigma^{-1}=\gamma^0\bar{\tau}$, and for $k\geq 0$, let $\sigma^k=\gamma^k\tilde{\tau}^k$, $\theta^k=\sigma^{k-1}/\sigma^k$, and 
$\gamma^{k+1}=\gamma^k(1+\underbar{$\mu$}\tilde{\tau}^k)$. 
Moreover, for $i\in\cM$ and $k\geq 0$, let $\tau_{i}^k=\big(\frac{1}{M}(\mu_{i}+\frac{1}{\tilde{\tau}^k})-\mu_{i}\big)^{-1}$. Then,
$\{[\tau_i^k]_{i\in\cM},\sigma^k,\theta^k\}$ satisfies \eqref{eq:step-size-tau-B} and \eqref{eq:step-size-theta-B} with $t^k=\sigma^k/\sigma^0$.}
\end{lemma}
\begin{proof}
\rev{Since $t^k=\sigma^k/\sigma^0$ for $k\geq 0$, we get $$\frac{t^k}{\sigma^k}=\frac{1}{\sigma^0}=\frac{t^{k+1}}{\sigma^{k+1}},\qquad t^{k+1}\theta^k=\frac{\sigma^{k+1}}{\sigma^0}\frac{\sigma^k}{\sigma^{k+1}}=\frac{\sigma^k}{\sigma^0}=t^k;$$ thus, the first condition in \eqref{eq:step-size-theta-B} holds with equality and the second condition is also satisfied for the given choice of parameters.}

\rev{Furthermore, since $t^k=\sigma^k/\sigma^0$ and $\mu_i+\frac{1}{\tau_{i}^k}=\frac{1}{M}(\mu_{i}+\frac{1}{\tilde{\tau}^k})$ for all $i\in\cM$, using these choice of parameters \eqref{eq:step-size-tau-B} can be equivalently written as follows:
\begin{align}
\label{eq:eq:step-size-tau-B-alternative}
\frac{1}{M}\Big(\frac{1}{\tilde\tau^k}+\mu_i\Big)\geq\frac{\sigma^{k+1}}{\sigma^k}\Big(\frac{1}{M}\Big(\frac{1}{\tilde\tau^{k+1}}+\mu_i\Big)-\frac{\mu_i}{M}\Big),\quad \forall~i\in\cM,
\end{align}
which is equivalent to $\frac{1}{\tilde\tau^k}+\mu_i\geq \frac{\sigma^{k+1}}{\sigma^k}\frac{1}{\tilde\tau^{k+1}}=\frac{\gamma^{k+1}\tilde\tau^{k+1}}{\gamma^{k}\tilde\tau^{k}}\frac{1}{\tilde\tau^{k+1}}=\frac{1}{\tilde\tau^{k}}(1+\underbar{$\mu$}\tilde\tau^k)=\frac{1}{\tilde\tau^k}+\underbar{$\mu$}$, which trivially holds for all $i\in\cM$ as $\mu_i\geq\underbar{$\mu$}$ for $i\in\cM$, where the first equality above follows from $\sigma^k=\gamma^k\tilde\tau^k$, the second equality uses $\gamma^{k+1}=\gamma^k(1+\underbar{$\mu$}\tilde\tau^k)$. This completes the proof.}
\end{proof}

\begin{lemma}\label{lem:parameter}
\eh{\sa{Consider $\{\tilde\tau^k\}_{k\geq 0}$ generated by \texttt{RB-APD-B} displayed in Algorithm~\ref{alg:RBPDB} for some $\delta\in[0,1)$ and $c_\alpha,c_\beta\geq 0$ such that $M(c_\alpha+c_\beta)+\delta\leq 1$. When $L_{yy}>0$, set $c_\alpha,~c_\beta> 0$; otherwise, when $L_{yy}=0$, set $c_\alpha>0$ and $c_\beta=0$. There exists a positive sequence $\{\hat\tau^k\}_{k\geq 0}$ such that $\tilde{\tau}^k\geq\eta\hat{\tau}^k$ for all $k\geq 0$. Furthermore,} when $L_{yy}>0$ and $\underbar{$\mu$}= 0$,
$\hat{\tau}^k\geq \min\{\Psi_1,\Psi_2\}$ for $k\geq 0$; when $L_{yy}=0$ and $\underbar{$\mu$}= 0$, $\hat{\tau}^k\geq \Psi_1$ for $k\geq 0$; and when $L_{yy}=0$ and $\underbar{$\mu$}> 0$, $\hat{\tau}^k\geq \Psi_1\sqrt{\gamma^0/\gamma^k}$ for $k\geq 0$, where $\Psi_1$ and $\Psi_2$ are defined in \eqref{eq:tauhat_bound-1,2}.}\footnote{$\underbar{$\mu$}= 0$ implies $\gamma^k=\gamma^0$ for $k\geq 0$, while $\underbar{$\mu$}> 0$ implies $\gamma^{k+1}>\gamma^k$ for $k\geq 0$.}
\end{lemma}
\begin{proof}
\eh{Let us fix arbitrary $k\geq 0$ and $i_k\in\cM$. Lemma~\ref{lem:stronger-step-condition-B} implies that if \eh{\eqref{eq:step-size-condition-pd-new1} and \eqref{eq:step-size-condition-pd-new} hold} then \eqref{eq:step-size-Ck-B} holds as well. First, to show that the backtracking condition is satisfied in a finite number of steps, we will show that there exists $\hat{\tau}^k>0$ such that \eh{\eqref{eq:step-size-condition-pd-new1} and \eqref{eq:step-size-condition-pd-new}} are true for all $\tilde{\tau}^k\in(0,\hat{\tau}^k]$. Using  $\sigma^k=\gamma^k\tilde{\tau}^k$,  $\theta^k=\sigma^{k-1}/\sigma^k$, $\overline L_{xx}=\max_{i\in\cM}\{L_{x_ix_i}\}$, and $\overline L_{yx}=\max_{i\in\cM}\{L_{yx_i}\}$ inequalities in \eh{\eqref{eq:step-size-condition-pd-new1} and \eqref{eq:step-size-condition-pd-new}} hold if}
\begin{align}
\label{eq:pd-equiv}
0\geq -(1-\delta)+\overline L_{xx}\tau^k_{i_k}+\frac{\overline L_{yx}^2}{c_\alpha}\gamma^k\tilde{\tau}^k\tau^k_{i_k},\quad 1-(\delta+M(c_\alpha+c_\beta))\geq\frac{M L_{yy}^2}{c_\beta}(\gamma^k\tilde{\tau}^k)^2.
\end{align}
\rev{Recall that $\bar\mu=\max_{i\in\cM}\mu_i$.} \eh{Suppose $L_{yy}>0$, then \rev{$\tau_{i}^k=\big(\frac{1}{M}(\mu_{i}+\frac{1}{\tilde{\tau}^k})-\mu_{i}\big)^{-1}$ for all $i\in\cM$ implies that} $\tau_{i_k}^k\leq M(1/\tilde\tau^k-(M-1)\bar\mu)^{-1}=M\tilde\tau^k/(1-(M-1)\bar\mu\tilde\tau^k)$ implies that \eqref{eq:pd-equiv} holds for all $\tilde{\tau}^k\in(0,\hat{\tau}^k]$, where
{\small
\begin{align}
\label{eq:tauhat}
&\hat{\tau}^k\triangleq\min\left\{\frac{-\bar{b}+\sqrt{\bar{b}^2+4(1-\delta)\overline L_{yx}^2\gamma^k/(\eh{M}c_\alpha)}}{2\overline L^2_{yx}\gamma^k/c_\alpha},\quad \frac{\sqrt{c_\beta(1-(M(c_\alpha+c_\beta)+\delta))}}{\gamma^k \sqrt{M}L_{yy}}\right\},\\
&\bar{b}\triangleq \overline L_{xx}+\frac{(1-\delta)(M-1)\bar\mu}{M}.
\end{align}}}%
Note that when $L_{yy}=0$, the second inequality in~\eqref{eq:pd-equiv} always holds \rev{due to our choice of $\delta\in[0,1)$ and $c_\alpha,c_\beta\geq 0$ satisfying $M(c_\alpha+c_\beta)+\delta\leq 1$}; hence, $\hat{\tau}_k$ is defined by the first term in \eqref{eq:tauhat}, \rev{i.e., treating $1/0$ in the second term as $+\infty$}. Since in each step of backtracking, $\tilde\tau^k$ is decreased by a factor of $\eta\in(0,1)$, when the backtracking terminates, $\tilde\tau^k\geq \eta\hat{\tau}^k$. \eh{Next, we provide a lower bound on $\hat\tau^k$ by considering the following two cases: (Case I) $\underbar{$\mu$}>0$; and (Case II) $\underbar{$\mu$}=0$. In particular, we will also use the following useful inequality: for any $a\geq 0$ and $b,c>0$, we have $\sqrt{a^2+cb^2}\geq a+\sqrt{c}bd$ where $d\triangleq -\frac{a}{b\sqrt{\bar c}}+\sqrt{\frac{a^2}{b^2\bar c}+1}$ holds for any $\bar c\in(0,c]$.}

\eh{For (Case I), from the assumption we know that $L_{yy}=0$; therefore, $\hat\tau^k=\frac{-\bar{b}+\sqrt{\bar{b}^2+4(1-\delta)\overline L_{yx}^2\gamma^k/(\eh{M}c_\alpha)}}{2\overline L^2_{yx}\gamma^k/c_\alpha}$. Using the fact that $\gamma_{k+1}\geq \gamma_k\geq \gamma_0>0$ for all $k\geq 0$, and the above useful inequality for $a=\bar{b}$, $b=2\sqrt{\frac{1-\delta}{Mc_\alpha}}\overline{L}_{yx}$, $c=\gamma_k$ and $\bar{c}=\gamma_0$ we conclude that $\hat\tau^k\geq \Psi_1\sqrt{\gamma_0/\gamma_k}$.}

For (Case II), when $\underbar{$\mu$}=0$, $\gamma^k=\gamma^0$ for $k\geq 0$. Hence, from \eqref{eq:tauhat}, we have $\hat{\tau}^k=\hat{\tau}^0$ for $k\geq 0$; thus, when $L_{yy}=0$, we get $\hat{\tau}^0\geq \Psi_1$, and when $L_{yy}>0$, we get $\hat{\tau}_0\geq \min\{\Psi_1,\Psi_2\}$.
\end{proof}

\begin{lemma}
\label{lem:k2-rate-B}
\eh{Suppose $\underbar{$\mu$}>0$, and $L_{yy}=0$. Stepsize sequences generated by both \texttt{RB-APD} and \texttt{RB-APD-B}, displayed in Algorithms~\ref{alg:RBPD} and~\ref{alg:RBPDB}, respectively, satisfy $\sigma^k=\Omega(k)$, $\tilde{\tau}^k=\Omega(1/\sigma^k)$, and $\tilde{\tau}^k/\sigma^k=\cO(1/k^2)$ for $k\geq 0$. Indeed, $\sigma^k \geq \frac{\Gamma^2}{3\underbar{$\mu$}} k$, $\tilde{\tau}^k\sigma^k\geq \Gamma^2/\rev{\underbar{$\mu$}^2}$ and $(\gamma^k)^{-1}=\tilde{\tau}^k/\sigma^k\leq 9/(\Gamma^2 k^2)$ for $k\geq 0$, where $\Gamma=\underbar{$\mu$}\tilde{\tau}_0\sqrt{\gamma_0}$ for \texttt{RB-APD} and 
{$\Gamma=\underbar{$\mu$}\eta\Psi_1\sqrt{\gamma_0}$} for \texttt{RB-APD-B} with $\Psi_1$ as defined in~\eqref{eq:tauhat_bound-1,2}. Furthermore, for all $\epsilon>0$, $\sigma^k\geq \frac{\Gamma^2}{(2+\epsilon)\underbar{$\mu$}}k$ and $\tilde{\tau}^k/\sigma^k\leq \rev{(2+\epsilon)^2}/(\Gamma^2 k^2)$ for $k\geq \lceil\frac{1}{\epsilon}\rceil$.}
\end{lemma}
\begin{proof}
The proof follows directly from Lemma 3.7. in \cite{hamedani2021primal}.
\end{proof}

\eh{Next, we prove Lemma \ref{assum:step-size-B}.}

\eh{{\bf Proof of Lemma \ref{assum:step-size-B}.} Indeed, Lemma~\ref{lem:stepsize-sequence} implies that $\{[\tau^k_i]_{i\in\cM},\sigma^k,\theta^k\}$ generated by \texttt{RB-APD-B} satisfies \eqref{eq:step-size-tau-B} and \eqref{eq:step-size-theta-B} for $\{t^k\}$ such that $t^k=\sigma^k/\sigma^0$ for $k\geq 0$. Moreover,  Lemma~\ref{lem:parameter} shows that for any $k\geq 0$, the backtracking condition in Algorithm~\ref{alg:RBPDB} holds after a finite number of inner iterations. Thus, the results of Lemma~\ref{assum:step-size-B} clearly hold.}

\rev{Before proceeding to the proof of our main results, we would like to remind the reader \eh{\sa{Assumption}~\ref{rem:bregman} stating our} assumptions on the Bregman distance generating function $\varphi_{\cX_i}(\cdot)$ for $i\in\cM$.}

\subsection{Asymptotic Convergence Analysis}\label{sec:proof-asymp}
To fix the notation, suppose $\bF:\Omega\rightarrow\reals$ is a random variable, $\bF(\omega)$ denotes a particular realization of $\bF$ corresponding to $\omega\in\Omega$ where $\Omega$ denotes the sample space.

\rev{First, recall that $U_i\in\reals^{{m}\times {m_i}}$ for $i\in\cM$ such that $\bI_m=[U_1,\hdots,{U_M}]$, where $\bI_m$ denotes the $m\times m$ identity matrix --see Definition~\ref{def:bregman}. Therefore, we can write $\bx^{k+1}$ equivalently as follows:
\begin{equation}
    \label{eq:Ek-operation-1}
    \bx^{k+1}=\bx^k+U_{i_k}U_{i_k}^\top (\tilde\bx^{k+1}-\bx^k).
\end{equation}
Also recall that for all $k\geq 1$, $\bE^k[\cdot]=\bE[\cdot\mid \cF_{k}]$, where $\cF_k=\sigma\left(\{i_0,\ldots,i_{k-1}\}\right)$ is the $\sigma$-algebra generated by i.i.d. random variables $\{i_0,\ldots,i_k\}$. Thus,
\begin{equation}
    \label{eq:Ek-operation-2}
    \bE^k[\bx^{k+1}]=\bx^k+\bE^k[U_{i_k}U_{i_k}^\top] (\tilde\bx^{k+1}-\bx^k)=\frac{1}{M}\tilde\bx^{k+1}+\Big(1-\frac{1}{M}\Big)\bx^k.
\end{equation}
Furthermore, for any $\psi:\cX\to\reals$ such that $\psi(\bx)=\sum_{i\in\cM}\psi_i(x_i)$ for some $\psi_i:\cX_i\to\reals$, we also have
\begin{equation}
    \label{eq:Ek-operation-3}
    \bE^k[\psi(\bx^{k+1})]=\frac{1}{M}\sum_{i\in\cM}\left(\psi(\bx^k)+\psi_i(\tilde x^{k+1})-\psi_i(x_i^k)\right)=\frac{1}{M}\psi(\tilde\bx^{k+1})+\Big(1-\frac{1}{M}\Big)\psi(\bx^k).
\end{equation}
Therefore, for $\cE^k(\cdot)$ defined in~\eqref{eq:Ek}, we can conclude that $\bE^k[\cE^k(\bx)]=0$ for any fixed $\bx$ and $k\geq 0$.}

\eh{\rev{Let $\bz^\#=(\bx^\#,y^\#)$ be a saddle point of $\cL$ in~\eqref{eq:original-problem}, and Bregman distance generating functions are selected according to \sa{Assumption}~\ref{rem:bregman}.
Lemma~\ref{lem:stepsize-sequence} implies that $\{[\tau_i^k]_{i\in\cM}, \sigma^k,\theta^k\}$ sequences generated by \texttt{RB-APD} and \texttt{RB-APD-B}, both satisfy \eqref{eq:step-size-tau-B} and \eqref{eq:step-size-theta-B} for $t^k=\frac{\sigma^k}{\sigma^0}$ for $k\geq 0$. Thus,}
we can conclude that 
for $k\geq 1$,
\begin{equation}\label{Qk-a}
    t^{k-1} R^k(\bz^\#)\geq t^kQ^{k}(\bz^\#).
\end{equation}
Finally, note that $C^k_*=C^k(\bx^{k+1},y^{k+1})$ for $\{\alpha^k,\beta^k\}$ sequence defined as $\alpha^k=c_\alpha/\sigma^{k-1}$ and $\beta^k=c_\beta/\sigma^{k-1}$ for all $k\geq 1$ for some $c_\alpha,c_\beta\geq 0$ as stated in Theorems~\ref{thm:backtrack} and~\ref{thm:main}.} 

{Now multiplying inequality \eqref{eq:one-step-main} by $t^k$ evaluated at $(\bx,y)=(\bx^\#,y^\#)$
taking conditional expectation, and using the facts that $\bE[\cE^k(\bx^\#)\mid \cF_k]=0$, $(1-\theta^k)_+L_{yy} 
\rev{=}0$}
\footnote{\rev{When $\underbar{$\mu$}=0$, for both \texttt{RB-APD} and \texttt{RB-APD-B}, $\theta^k \geq 0$ for $k\geq 0$; thus, $(1-\theta^k)_+=0$, which leads to $(1-\theta^k)_+L_{yy}=0$ for $k\geq 0$. On the other hand, for the case $\underbar{$\mu$}>0$, we assume that $L_{yy}=0$, i.e., $\Phi(\bx,\cdot)$ is affine for every fixed $\bx$; hence, we again get $(1-\theta^k)_+L_{yy}=0$, even though $\theta^k<1$ for some $k\geq 0$ is possible for this scenario.}}, 
\eh{and $\eh{C^k_*}\leq -\delta[{M}\bD_\cX^{\bT^k}(\bx^{k+1},\bx^k)+\frac{1}{\sigma^k}\bD_\cY(y^{k+1},y^k)]$ combined with \eqref{Qk-a} lead to
\begin{align}\label{eq:bound-iterates}
&t^k\bE^k[\cL({\bx}^{k+1},y^\#)-\cL(\bx^\#,{y}^{k+1})]+
\bE^k[t^kR^{k+1}(z^\#)+(M-1)t^kH^{k+1}(\bz^\#)]\nonumber\\
&\leq t^{k-1}R^k(\bz^\#)+(M-1)t^{k-1} H^k(\bz^\#)+(M-1)t^k(1-\theta^k)\big(\cL(\bx^k,y^\#)-\cL(\bx^\#,y^\#)\big) \nonumber\\
&\quad -t^k\delta\bE^k[{M}\bD_\cX^{\bT^k}(\bx^{k+1},\bx^k)+\tfrac{1}{\sigma^k}\bD_\cY(y^{k+1},y^k)].
\end{align}
Note when $\underbar{$\mu$}>0$ from the step-size rules we have that for any $k\geq 1$,
\begin{align}
    &\theta^k= \frac{\sigma^{k-1}}{\sigma^k}=\frac{\gamma^{k-1}\tilde\tau^{k-1}}{\gamma^k\tilde\tau^k}\geq \frac{\gamma^{k-1}}{\gamma^{k}}= \frac{1}{1+\underbar{$\mu$}\tilde\tau^{k-1}}\geq \frac{1}{1+\underbar{$\mu$}\tilde\tau^0}\geq \frac{M-1}{M},\label{eq:SC-theta-bound}\\
    \implies & (M-1)(1-\theta^k)t^k\leq t^{k-1}.
\end{align}
On the other hand, when $\underbar{$\mu$}=0$, then $\theta^k\geq 1$ which immediately implies that $(M-1)(1-\theta^k)t^k\leq t^{k-1}$. Therefore, we can rewrite \eqref{eq:bound-iterates} as follows by noting that $\cL(\bx^\#,y^\#)-\cL(\bx^\#,y^{k+1})\geq 0$,
\begin{align}\label{eq:a-b}
    &\bE^k[a^{k+1}]=\bE[a^{k+1}\mid\rev{\cF_{k}}]\leq a^k-b^k,
\end{align}
where $a^k,b^k\in\rev{\cF_{k}}$ are defined as follows
\begin{align}
    &a^k=t^{k-1}R^k(\bz^\#)+(M-1)t^{k-1} H^k(\bz^\#)+t^{k-1}\big(\cL(\bx^k,y^\#)-\cL(\bx^\#,y^\#)\big),\label{eq:ak}\\
    &b^k=t^k\delta\left( 
\rev{\bD_\cX^{\bT^k}(\tilde\bx^{k+1},\bx^k)}+\tfrac{1}{\sigma^k}\bD_\cY(y^{k+1},y^k)\right)\rev{\geq 0}.\label{eq:bk}
\end{align}}%


\eh{Moreover, from concavity of $\Phi(\bx,\cdot)$ and the fact that $\cL(\bx^k,y^\#)-\cL(\bx^\#,y^\#)\geq 0$ we can provide a lower bound on $a^k$ as follows:
\begin{align}
\label{eq:ak-bound}
a^k
&\geq t^{k-1}\Big[(M-1)\big(\cL(\bx^k,y^\#)-\cL(\bx^\#,y^\#)\big)+M\bD_\cX^{\bT^{k-1}}(\bx,\bx^{k})+\frac{M}{2}\norm{\bx^\#-\bx^{k}}_{\fM}^2 +\frac{1}{\sigma^{k-1}}\bD_{\cY}(y^\#,y^{k})   \nonumber \\
& \quad +\fprod{q^{k},y^{k}-y^\#}+\frac{M}{2\alpha^{k}}\norm{p_x^{k}}_{\cY^*}^2+\frac{M}{2\beta^{k}}\norm{p_y^{k}}^2_{\cY^*}\Big],\nonumber \\
&\geq \rev{\frac{M}{2}\norm{\bx^\#-\bx^{k}}^2_{t^{k-1}(\bT^{k-1}+\fM)}+t^{k-1}\Big(\tfrac{1}{\sigma^{k-1}}-M(\alpha^k+\beta^k)\Big)\bD_{\cY}(y^\#,y^{k})}, \nonumber\\
&\geq \rev{\frac{M}{2} t^{k}~\norm{\bx^\#-\bx^{k}}^2_{\bT^{k}+(1-\tfrac{1}{M})\fM}+t^k\Big(\tfrac{1}{\sigma^{k}}-\theta^kM(\alpha^k+\beta^k)\Big)\bD_{\cY}(y^\#,y^{k})},
\end{align}
where the second inequality follows from \eqref{eq:inner-q-B} and $\bD_{\cX_i}(x_i,x_i')\geq \frac{1}{2}\norm{x_i-x_i'}^2_{\cX_i}$ for $i\in\cM$, and the third one follows from \eqref{eq:step-size-tau-B} and \eqref{eq:step-size-theta-B}.}

\rev{Note that for all $i\in\cM$, we have
\begin{align}
\label{eq:inf-cond-1}
    Mt^k\Big(\frac{1}{\tau_i^k}+\big(1-\frac{1}{M}\big)\mu_i\Big)=\frac{t^k}{\tilde\tau^k}=\frac{\sigma^k}{\sigma^0}\frac{1}{\tilde\tau^k}=\frac{\gamma^k}{\gamma_0}\frac{1}{\tilde{\tau}^0}\geq\frac{1}{\bar{\tau}},\quad \forall~k\geq 0,
\end{align}
where we used $t^k=\frac{\sigma^k}{\sigma^0}$, $\sigma^k=\gamma^k\tilde\tau^k$, and $\gamma^k\geq \gamma^0$ for all $k\geq 0$, and $\tilde{\tau}^0\leq \bar{\tau}$. Moreover, for all $k\geq 0$, setting $\alpha^{k+1}=c_\alpha/\sigma^k$ and $\beta^{k+1}=c_\beta/\sigma^k$ for $c_\alpha,c_\beta\geq 0$ such that $1-M(c_\alpha+c_\beta)\geq \delta>0$, implies
\begin{align}
\label{eq:inf-cond-2}
    t^k\Big(\tfrac{1}{\sigma^{k}}-\theta^k M(\alpha^k+\beta^k)\Big)=\frac{1}{\sigma^0}-M\frac{\sigma^{k-1}}{\sigma^0} \frac{c_\alpha+c_\beta}{\sigma^{k-1}}\geq \frac{\delta}{\sigma^0}\geq \frac{\delta}{\gamma^0\bar{\tau}},\quad \forall~k\geq 0,
\end{align}
where we used $t^k=\frac{\sigma^k}{\sigma^0}$ and $\theta^k=\sigma^{k-1}/\sigma^k$ for all $k\geq 0$, and $\sigma^0=\gamma^0\tilde\tau^0\leq \gamma^0\bar\tau$. Finally, combining \eqref{eq:ak-bound} with the lower bounds given in \eqref{eq:inf-cond-1} and \eqref{eq:inf-cond-2}, and using $\bD_\cY(y,y')\geq\frac{1}{2}\norm{y-y'}_{\cY}^2$ for any $y\in\cY$ and $y'\in\dom h$, we get
\begin{align}
    \label{Qk-b}
    a^k\geq \frac{\delta'_x}{2}\norm{\bx^\#-\bx^k}_{\cX}^2+\frac{\delta'_y}{2}\norm{y^\#-y^k}_{\cY}^2\geq 0,\quad\forall~k\geq 0,
\end{align}
where $\delta'_x=\frac{1}{\bar\tau}>0$ and $\delta'_y=\frac{\delta}{\gamma^0\bar\tau}>0$. Therefore, invoking Lemma~\ref{lem:supermartingale}, we can conclude that $\lim_{k\to+\infty}a^k\geq 0$ and $\sum_{k=0}^\infty b^k\in\reals_{++}$ exist in a.s. sense, i.e.,
\begin{align}
    \label{eq:tkTk_sum}
    \sum_{k=0}^{+\infty}t^k\bD_\cX^{\bT^k}(\tilde\bx^{k+1},\bx^k)<\infty\quad a.s.
\end{align}}%

\rev{Since $\{a^k\}$ is an a.s. bounded sequence,} \eqref{Qk-b} implies that $\{\bz^k(\omega)\}$ is a bounded sequence for any $\omega\in\Omega$, \rev{where $\bz^k=(\bx^k,y^k)$}; hence, it has a convergent subsequence $\bz^{k_n}(\omega)\rightarrow \bz^*(\omega)$ as $n\rightarrow \infty$ for some $\bz^*(\omega)\in\cX\times\cY$ -- note that $k_n$ also depends on $\omega$ which is omitted to simplify the notation. Define $\bz^*=(\bx^*,y^*)$ such that $\bz^*=[\bz^*(\omega)]_{\omega\in\Omega}$.

\rev{Next, we argue that $\bz^{k_n\pm 1} \rightarrow \bz^*$ almost surely as $n\rightarrow \infty$. To this aim, first we analyze $\{t^k\bT^k\}$ sequence that appears in the definition of $b^k$ in~\eqref{eq:bk}. Note that \eqref{eq:inf-cond-1} implies that
\begin{align}\label{eq:tkTk-eq}
    \frac{t^k}{\tau_i^k}\eh{=} \frac{1}{M}\frac{\gamma^k}{\gamma^0\tilde\tau^0}-t^k\Big(1-\frac{1}{M}\Big)\mu_i\eh{=}\frac{\gamma^k}{\gamma^0\tilde{\tau}^0}\Big(\frac{1}{M}-\tilde{\tau}^k\Big(1-\frac{1}{M}\Big)\mu_i\Big),\quad \forall~i\in\cM,~\forall~k\geq 0,    
\end{align}
where we used $t^k=\frac{\sigma^k}{\sigma^0}=\frac{\gamma^k\tilde\tau^k}{\gamma^0\tilde\tau^0}$ for $k\geq 0$. Since $\bar{\tau}\geq\tilde\tau^k$ for $k\geq 0$, choosing $\reals_{++}\ni\bar\tau\eh{\leq}\frac{\bar\delta}{\bar\mu (M-1)}$ for some $\bar\delta\in(0,1)$ and $\bar\mu=\max_{i\in\cM}\mu_i$ implies that for all $k\geq 0$, we have
\begin{align}
    \label{eq:tkTk-bound}
    \frac{t^k}{\tau_i^k}\geq\frac{\gamma^k}{\gamma^0\tilde{\tau}^0}\Big(\frac{1}{M}-\bar{\tau}\Big(1-\frac{1}{M}\Big)\mu_i\Big)\geq\frac{\gamma^k}{\gamma^0}\frac{1-\bar\delta}{M \bar{\tau}} \geq\frac{1-\bar\delta}{M\bar{\tau}}>0,\quad\forall~i\in\cM,
\end{align}
which follows from $\gamma^k\geq\gamma^0$ for $k\geq 0$; hence, $t^k\bT^k\succeq \frac{1}{\bar\tau}\frac{1-\bar\delta}{M}\bI$ for $k\geq 0$. Finally, we also have $\frac{t^k}{\sigma^k}=\frac{1}{\sigma^0}\geq\frac{1}{\gamma^0\bar\tau}>0$ for $k\geq 0$. Now, now combining these two results with $\sum_{k=0}^\infty b^k <\infty$ (due to Lemma~\ref{lem:supermartingale}), we can conclude that $b_k\to 0$ implying 
\begin{align}
\label{eq:consecutive-iterates}
    0\leq \norm{\bx^{k+1}-\bx^k}_{\cX}^2\leq\norm{\tilde\bx^{k+1}-\bx^k}_{\cX}^2\to 0, \quad \norm{y^{k+1}-y^k}_{\cY}^2\to 0,
\end{align}
almost surely as $k\to\infty$. Therefore,} for any realization $\omega\in\Omega$ and $\zeta>0$, there exists $N_1(\omega)$ such that for any $n\geq N_1(\omega)$, we have $\max\{\norm{\bz^{k_n}(\omega)-\bz^{k_n-1}(\omega)},~\norm{\bz^{k_n}(\omega)-\bz^{k_n+1}(\omega)}\}< \frac{\zeta}{2}$. Convergence of $\{\bz^{k_n}(\omega)\}$ sequence also implies that there exists $N_2(\omega)$ such that for any $n\geq N_2(\omega)$, $\norm{\bz^{k_n}(\omega)-\bz^*(\omega)}< \frac{\zeta}{2}$. Thus, for $\omega\in\Omega$, letting $N(\omega)\triangleq \max\{N_1(\omega),N_2(\omega)\}$, we conclude that $\norm{\bz^{k_n\pm 1}(\omega)-\bz^*(\omega)}< \zeta$, i.e., $\bz^{k_n\pm 1} \rightarrow \bz^*$ almost surely as $n\rightarrow \infty$.

Fix an arbitrary $\omega\in\Omega$ and consider the subsequence $\{k_n\}_{n\geq 1}$. For all $n\in\mathbb{Z}_+$, $\bx$- and $y$-updates imply that 
\begin{subequations}
\label{eq:opt-cond}
\begin{align}
&\frac{1}{\tau^{k_n}_{i_{k_n}}} \Big(\grad\varphi_{\cX_{i_{k_n}}}\big(\bx^{k_n}(\omega)\big)-\grad\varphi_{\cX_{i_{k_n}}}\big(\bx^{k_n+1}(\omega)\big)\Big)
-\grad_{x_{\rev{i_{k_n}}}}\Phi(\bx^{k_n}(\omega),\rev{y^{k_n+1}}(\omega))\in \partial f_{i_{k_n}}\big(x_{i_{k_n}}^{k_n+1}(\omega)\big), \label{opt-cond-x}\\
&\frac{1}{\sigma^{k_n}}\Big(\grad\varphi_\cY\big(y^{k_n}(\omega)\big)-\grad\varphi_\cY\big(y^{k_n+1}(\omega)\big)\Big)+s^{k_n}(\omega)\in\partial h\big(y^{k_{n}+1}(\omega)\big), \label{opt-cond-y}
\end{align}
\end{subequations}
where \rev{we define $s^k= \grad_{y} \Phi(\bx^k,y^k)+\theta^k q^k$ and $q^k={M}(\grad_y \Phi(\bx^{k},{y^{k}})-\grad_y\Phi(\bx^{k-1},{y^{k-1}}))$ for $k\geq 0$.}

Note that 
the sequence of randomly chosen block coordinates in \texttt{RB-APD} or \texttt{RB-APD-B}, i.e., $\{i_{k_n}\}_{n\geq 1}$, is a Markov chain containing a single recurrent class. More specifically, the states are represented by $\cM$ and starting from state $i\in\cM$ the probability of eventually returning to state $i$ is strictly positive for all $i\in\cM$. Therefore, for any $i\in\cM$, we can select a further subsequence 
$\cK^i\subseteq \{k_n\}_{n\in\mathbb{Z}_+}$ such that $i_\ell=i$ for all $\ell\in \cK^i$. Note that $\cK^i$ is an infinite subsequence w.p. 1 and $\{\cK^i\}_{i\in\cM}$ is a partition of $\{k_n\}_{n\in\mathbb{Z}_+}$. For any $i\in\cM$, one can conclude from \eqref{opt-cond-y} and \eqref{opt-cond-x} that for all $\ell\in\cK^i$,
\begin{subequations}\label{eq:subsublimit}
\begin{align}
&u_i^k\triangleq\frac{1}{\tau^\ell_i} \Big(\grad\varphi_{\cX_i}\big(\bx^{\ell}(\omega)\big)-\grad\varphi_{\cX_i}\big(\bx^{\ell+1}(\omega)\big)\Big)
\rev{-\grad_{x_{i}}\Phi(\bx^{\ell}(\omega),y^{\ell+1}(\omega))}\in \partial f_{i}\big(x_i^{\ell+1}(\omega)\big),\label{eq:subsublimit-x}\\
&v^k\triangleq \frac{1}{\sigma^\ell}\Big(\grad\varphi_\cY\big(y^{\ell}(\omega)\big)-\grad\varphi_\cY\big(y^{\ell+1}(\omega)\big)\Big)+s^{\ell}(\omega)\in\partial h\big(y^{\ell+1}(\omega)\big).\label{eq:subsublimit-y}
\end{align}
\end{subequations}
Since 
$\cK^i\subseteq \{k_n\}_{n\in\mathbb{Z}_+}$, we have that $\lim_{\ell\in\cK^i}\bz^{\ell}(\omega)=\lim_{\ell\in\cK^i}\bz^{\ell+1}(\omega)=\bz^*(\omega)$.

{\bf (Part I) of Theorems \ref{thm:main} and \ref{thm:backtrack}.} \rev{Here we consider the case  $\underbar{$\mu$}=0$.} 
We first show that for any $\omega\in\Omega$, $\bz^*(\omega)$ is a saddle point of \eqref{eq:original-problem} by considering the optimality conditions for the updates of $\bx^{k+1}$ and $y^{k+1}$ of the \texttt{RB-APD} and \texttt{RB-APD-B} algorithms. \rev{Then, we argue that $\bz^*$ is indeed the unique limit point of $\{\bz^k\}$, i.e., $\bz^k\to \bz^*$ as $k\to\infty$.}

\rev{Next, we argue that $\sup_{k\geq 0}\frac{1}{\tau_i^k}<\infty$ for $i\in\cM$ and $\sup_{k\geq 0}\max\{\frac{1}{\sigma^k},\theta^k\}<\infty$. Once we have this result,} using the fact that for any $i\in\cM$, $\grad\varphi_{\cX_i}$ and $\grad\varphi_\cY$ are continuously differentiable on $\dom f_i$ and $\dom h$, respectively, it follows from Theorem 24.4 in~\cite{rockafellar2015convex} that by taking the limit of both sides of \eqref{eq:subsublimit} 
we get $\mathbf{0}\in \grad_{x_i}\Phi\big(\bx^*(\omega),y^*(\omega)\big)+\partial f_i(x_i^*(\omega))$, and $\mathbf{0}\in \partial h(y^*(\omega))-\grad_y\Phi(\bx^*(\omega),y^*(\omega))$, which implies that $\bz^*(\omega)$ is a saddle point of \eqref{eq:original-problem} for any $\omega\in\Omega$. \rev{Indeed, since $\underbar{$\mu$}=0$, for $k\geq 0$, $\gamma^k=\gamma^0$; hence, $\sigma^k=\gamma^0\tilde\tau^k$. Moreover, from Lemma~\ref{lem:parameter}, we have ${\eta\Psi}\leq  \tilde\tau^k\leq\bar\tau$ for $k\geq 0$, where $\Psi=\Psi_1$ if $L_{yy}=0$ and $\Psi=\min\{\Psi_1,\Psi_2\}$ if $L_{yy}>0$. Note that these bounds on $\tilde\tau^k$ hold for both \texttt{RB-APD} and \texttt{RB-APD-B}. Next, using the $\tau_i^k$ update rule, $\sigma^k=\gamma^0\tilde\tau^k$ and $\theta^k=\frac{\sigma^{k-1}}{\sigma^k}$, we get the following uniform bounds holding for all $k\geq 0$:
\begin{align}
    &0<\frac{1}{\tau_i^0}\leq \frac{1}{\tau_i^k}\leq \frac{1}{M}\frac{1}{\tilde\tau^k}-\frac{M-1}{M}\mu_i\leq \frac{1}{\eta \Psi M},\quad\forall~i\in\cM,\label{eq:tau-i-bound}\\ 
    &\frac{1}{\gamma^0\bar\tau}\leq \frac{1}{\sigma^k}=\frac{1}{\gamma^0\tilde\tau^k}\leq\frac{1}{\eta\Psi\gamma^0},\quad 0\leq \theta^k=\frac{\sigma^{k-1}}{\sigma^k}=\frac{\tilde\tau^{k-1}}{\tilde\tau^k}\leq \frac{\bar\tau}{\eta\Psi}.\label{eq:sigma-theta-bound}
\end{align}}%
\rev{Next, we show that $\bz^k\to \bz^*$ almost surely as $k\to\infty$, and for this result we will use the following bound on $\{t^k\}$:}
\begin{align}
    \eh{0\leq t^k=\frac{\sigma^k}{\sigma^0}=\frac{\gamma^0\tilde\tau^k}{\sigma^0}\leq \rev{\frac{\bar\tau}{\tilde\tau^0}\leq \frac{\bar\tau}{\eta\Psi}.}}\label{eq:tk-upper}
\end{align}


\eh{Since \eqref{eq:bound-iterates} is true for any saddle point $\bz^\#$, letting $\bz^\#=\bz^*$ and repeating the same 
\rev{arguments we used for showing \eqref{eq:a-b}, we can} conclude that 
\begin{equation}\label{eq:d-b}
    \bE^k[d^{k+1}]\leq d^k-b^k,
\end{equation}
where $d^k\triangleq t^{k-1}[(M-1)H^k(\bz^*)+R^k(\bz^*)+\cL(\bx^k,y^*)-\cL(\bx^*,y^*)]$ and $b^k$ is defined in \eqref{eq:bk}. Moreover, similar to \eqref{Qk-b}, we can show that 
\begin{equation}\label{eq:dk-lower}
    d^k\geq \frac{\delta'_x}{2}\norm{\bx^*-\bx^k}_{\cX}^2+\frac{\delta'_y}{2}\norm{y^*-y^k}_{\cY}^2\geq 0,\quad\forall~k\geq 0,
\end{equation}
where $\delta'_x,\delta'_y$ are defined after inequality \eqref{Qk-b}. Next, invoking Lemma~\ref{lem:supermartingale} again for \eqref{eq:d-b}, one can conclude that $d_*\triangleq \lim_{k\rightarrow \infty} d^k \geq 0$ exists almost surely.}
\rev{Now, we show that $d^{k+1}\to 0$ as $k\to \infty$. Let $\{\bz^{k_n}\}_{n\geq 0}$ be the subsequence we considered earlier such that $\bz^{k_n}\to \bz^*$ a.s. as $n\to \infty$. Using \eqref{eq:consecutive-iterates}, it is trivial to check that $H^{k_n+1}(\bz^*)\to 0$ and $\cL(\bx^{k_n+1},y^*)-\cL(\bx^*,y^*)\to 0$ as $k\to\infty$. Thus, $\lim_{n\to\infty}d^{k_n+1}=\lim_{n\to\infty}t^{k_n}R^{k_n+1}(\bz^*)$. Consider
\begin{align*}
    R^{k_n+1}(\bz^*)\triangleq &M\bD_\cX^{\bT^{k_n}}(\bx^*,\bx^{k_n+1})+\frac{M}{2}\norm{\bx^*-\bx^{k_n+1}}_{\fM}^2 +\frac{1}{\sigma^{k_n}}\bD_{\cY}(y^*,y^{k_n+1}) \\&+\fprod{\grad_y\Phi(\bx^{k_n+1},y^{k_n+1})-M\grad_y\Phi(\bx^{k_n},y^{k_n}),y^{k_n+1}-y^*}\\
    &+\frac{M}{2c_\alpha}\sigma^{k_n}\norm{
\grad_y\Phi(\bx^{k_n+1},y^{k_n+1})-\grad_y\Phi(\bx^{k_n},y^{k_n+1})}_{\cY^*}^2\\
&+\frac{M}{2c_\beta}\sigma^{k_n}\norm{
\grad_y\Phi(\bx^{k_n},y^{k_n+1})-\grad_y\Phi(\bx^{k_n},y^{k_n})}^2_{\cY^*},
\end{align*}
where we set $\alpha^{k+1}=c_\alpha/\sigma^k$ and $\beta^{k+1}=c_\beta/\sigma^k$ for some $c_\alpha,c_\beta\geq 0$ as described in~Lemma~\ref{lem:parameter}.}
\rev{Using \eqref{eq:tau-i-bound}, \eqref{eq:sigma-theta-bound} and \eqref{eq:tk-upper} together with the fact that $\bz^{k_n\pm 1}(\omega)\rightarrow \bz^*(\omega)$ for any $\omega\in\Omega$, we conclude that $0=t^{k_n}R^{k_n+1}(\bz^*(\omega))=\lim_{n\rightarrow \infty} d^{k_n+1}(\omega)$ for any $\omega\in\Omega$, where we also used the fact that $\{\bz^{k_n}\}$ is a bounded sequence which implies $\{\grad_y\Phi(\bx^{k_n+1},y^{k_n+1})-M\grad_y\Phi(\bx^{k_n},y^{k_n})\}_{n\geq 0}$ is bounded as well due to continuity of $\grad_y\Phi$.}
Henceforth, $\lim_{k\rightarrow \infty} d^k=0$ almost surely which together with \eqref{eq:dk-lower} implies that $\bz^k\rightarrow \bz^*$ almost surely.

{\bf (Part II) of Theorems \ref{thm:main} and \ref{thm:backtrack}.}
\eh{Suppose $\underbar{$\mu$}>0$. 
Recall that in this case, \rev{for all $i\in\cM$, we set $\varphi_{\cX_i}(x_i)=\frac{1}{2}\norm{x_i}_{\cX_i}^2$, where $\norm{x_i}_{\cX_i}=\sqrt{\fprod{x_i,x_i}}$.} 
Indeed, since $\sum_{k=0}^{\infty} b^k<\infty$,  $t^k\bD_\cX^{\bT^k}(\btx^{k+1},\bx^k)\to 0$ holds. 
\rev{Moreover, \eqref{eq:tkTk-bound} shows that} for any $i\in\cM$ and $k\geq 0$, $\frac{t^k}{\tau_i^k}\geq \frac{\gamma^k}{\gamma_0}\frac{1-\bar\delta}{M\bar\tau}$; therefore, we have $0\leq \gamma^k\norm{\bx^{k+1}-\bx^k}^2_{\cX}\leq \gamma^k\norm{\btx^{k+1}-\bx^k}^2_{\cX}\to 0$.
Moreover, 
we have that $\gamma^k=\sigma^k/\tilde\tau^k\geq (\Gamma/(\underbar{$\mu$}\tilde\tau^k))^2\geq (M\Gamma/(\underbar{$\mu$}\tau_i^k))^2$ for all $k\geq 0$, \rev{where the first inequality follows from Lemma \ref{lem:k2-rate-B} and the last one uses \eqref{eq:tau-i-bound}}. Therefore, one can easily conclude that $\norm{\bx^{k+1}-\bx^k}_{\cX}/\tau_i^k\to 0$ for any $i\in\cM$. \rev{Thus, invoking invoking \cite[Theorem 24.4]{rockafellar2015convex}, \eqref{eq:subsublimit-x} implies that $-\grad_\bx\Phi(\bx^*,y^*)\in\partial f(\bx^*)$ assuming $\grad\varphi_{\cX_i}$ is Lipschitz.
Finally, it follows from \eqref{eq:sigma-theta-bound} \rev{and \eqref{eq:subsublimit-y} that 
$\grad_y \Phi(\bx^*,y^*)\in\partial h(y^*)$, where we used \cite[Theorem 24.4]{rockafellar2015convex} assuming} $\grad\varphi_{\cY}$ is continuous. Therefore, we establish that any limit point of $\{\bz^k\}$ is a saddle point of \eqref{eq:original-problem}.}} 
\qed

\subsection{Convergence Rate Analysis}\label{sec:proof-rate}
\rev{Next we use the one-step result shown in 
Lemma~\ref{lem:one-step} to derive 
a useful bound for the ergodic sequence generated by either \texttt{RB-APD} or \texttt{RB-APD-B}, which will help us establish the desired convergence rate results.} 

\begin{lemma}\label{lem:general-bound}
Suppose Assumptions~\ref{assum} and~\ref{rem:bregman} hold. Given some $\gamma^0>0$ and $\bar{\tau}\in\left(0,\tfrac{1}{\bar{\mu}(M-1)}\right)$, let 
$\{\bx^k,y^k\}_{k\geq 0}$ be the iterate sequence generated by either \texttt{RB-APD-B}, stated in Algorithm~\ref{alg:RBPDB}, or by \texttt{RB-APD}, stated in Algorithm~\ref{alg:RBPD}. If \texttt{RB-APD} is used, we  
assume that \eqref{eq:initial_step_condition} holds for some $c_\alpha,c_\beta\geq 0$ and $\delta\in[0,1)$ as described in Theorem~\ref{thm:backtrack}.
Then for any 
\rev{$(\bx,y)\in\dom f\times\dom h$} and $K\geq 1$,
\begin{eqnarray}\label{eq:lagrange-general-bound}
\lefteqn{\rev{T^K\big(\cL(\bar{\bx}^K,y)-\cL(\bx,\bar{y}^K)\big)}}\\
&&\leq M\bD_\cX^{\bT^0}(\bx,\bx^0) +\frac{M-1}{2}\norm{\bx-\bx^0}_{\fM}^2+\big(\frac{1}{\sigma^0}+\rev{\theta^0}(M-1)L_{yy}\big)\bD_{\cY}(y,y^0) \nonumber\\
&&\mbox{} -t^{K}\Big(M\bD_\cX^{\bT^K}(\bx,\bx^K)+\frac{M-1}{2}\norm{\bx-\bx^K}_{\fM}^2 +\big(\frac{1}{\sigma^{K}}-M\theta^K(\alpha^K+\beta^K)\big)\bD_{\cY}(y,y^K)\Big)\nonumber\\
&&\mbox{}+(M-1)\Big(\rev{\cL(\bx^0,y)-\cL(\bar{\bx}^K,y)}+\sum_{k=0}^{K-1}t^k(1-\theta^k)_+L_{yy}\bD_{\cY}(y,y^k)\Big)+\rev{\sum_{k=0}^{K-1}t^k\cE^k(\bx)},\nonumber
\end{eqnarray}
where $T^K=\sum_{k=0}^{K-1}t^k$, $\bar{\bx}^K=\frac{1}{T^K+M-1}\Big(\sum_{k=0}^{K-2}(Mt^k-(M-1)t^{k+1})\bx^{k+1}+Mt^{K-1}\bx^K\Big)$ and $\bar{y}^K=\frac{1}{T^K}\sum_{k=0}^{K-1}t^ky^{k+1}$ \rev{for $\{t^k\}_{k\geq 0}\subset\reals_{++}$ such that $t^k=\sigma^k/\sigma^0$ for $k\geq 0$.}
\end{lemma}
\begin{proof}
\rev{By employing Lemma \ref{lem:one-step}, we aim to provide a 
convergence rate analysis for both convex-concave setting, i.e., $\underbar{$\mu$}=0$, and strongly convex-concave setting, i.e., $\underbar{$\mu$}=0$ and $L_{yy}=0$. Suppose the Bregman distance generating functions are set according to \sa{Assumption}~\ref{rem:bregman}. Lemma~\ref{lem:stepsize-sequence} implies that $\{[\tau_i^k]_{i\in\cM}, \sigma^k,\theta^k\}$ sequences generated by \texttt{RB-APD} and \texttt{RB-APD-B}, both satisfy \eqref{eq:step-size-tau-B} and \eqref{eq:step-size-theta-B} for $t^k=\frac{\sigma^k}{\sigma^0}$ for $k\geq 0$ --here, for \texttt{RB-APD}, assuming \eqref{eq:initial_step_condition} holds for some $c_\alpha,c_\beta\geq 0$ and $\delta\in[0,1)$ as described in Theorem~\ref{thm:backtrack}. Thus,} 
one can easily verify 
that \rev{for any $\bz\in\dom f\times \dom h$,} we have $t^{k+1}Q^{k+1}(\bz)-t^{k}R^{k+1}(\bz)\leq 0$ for all \rev{$k\geq 0$}. 

Now, multiplying both sides of \eqref{eq:one-step-main} by \rev{$t^k=\sigma^k/\sigma^0>0$}, summing over $k=0$ to $K-1$, we obtain
\begin{align}\label{eq:sum-convex-B}
&\sum_{k=0}^{K-1}t^k\left(\cL(\bx^{k+1},y)-\cL(\bx,y^{k+1})\right)\leq 
Q^0(\bz)-t^{K-1}R^K(\bz)+(M-1)\big(
\rev{\theta^0} H^0(\bz)-t^{K-1}H^K(\bz)\big)\nonumber\\
&+\sum_{k=0}^{K-1}t^k(M-1)\Big((1-\theta^k)(\cL(\bx^k,y)-\cL(\bx,y))+(1-\theta^k)_+L_{yy}\bD_{\cY}(y,y^k)\Big)+\sum_{k=0}^{K-1}t^k (\eh{C^k_*}+\cE^k(\bx)),
\end{align}
\rev{where we used $t^0=1$ and $t^{k+1}\theta^{k+1}=t^k$ for $k\geq 0$.
Due to initialization $\bx^{-1}=\bx^0$, and $y^{-1}=y^0$, the definitions of $Q^k(\cdot)$ and $H^k(\cdot)$ given in \eqref{eq:lagrangian-Q-B} and \eqref{eq:largrangian-H}, respectively, imply that
\begin{align*}
    Q^0(\bz)&=M\bD_\cX^{\bT^0}(\bx,\bx^0) +\frac{M-1}{2}\norm{\bx-\bx^0}_{\fM}^2+\frac{1}{\sigma^0}\bD_{\cY}(y,y^0)+\theta^0 (M-1)\fprod{\grad_y\Phi(\bx^0,y^0),~y-y^0)},\\
    H^0(\bz)&=f(\bx^0)-f(\bx)+\Phi(\bx^0,y^0)-\Phi(x,y).
\end{align*}}
Using 
the bound \eqref{eq:Lyy_bound}, we have that
\begin{align}\label{eq:H0-B}
H^0(\bz)+\fprod{\grad_y\Phi(\bx^0,y^0),y-y^0}&\leq f(\bx^0)-f(\bx)+\Phi(\bx^0,y)-\Phi(\bx,y)+\frac{L_{yy}}{2}\norm{y-y^0}_\cY^2\nonumber\\
&\leq \cL(\bx^0,y)-\cL(\bx,y)+L_{yy}\bD_\cY(y,y^0).
\end{align}
Moreover, using concavity of $\Phi(\bx,\cdot)$, for any $\bx\in\cX$, we can find a lower bound on $H^K(\bz)$,
\begin{align}\label{eq:lower-Hk-B}
H^K(\bz)&\geq f(\bx^K)-f(\bx)+\Phi(\bx^K,y)-\Phi(\bx,y)+\fprod{\grad_y\Phi(\bx^K,y^K),y^K-y} \nonumber \\
&=\cL(\bx^K,y)-\cL(\bx,y)+\fprod{\grad_y\Phi(\bx^K,y^K),y^K-y}.
\end{align}
\rev{Within $R^K(\bz)$, there is $\fprod{r^K,y^K-y}$ term, where $r^K=\grad_y\Phi(\bx^K,y^K)-M\grad_y\Phi(\bx^{K-1},y^{K-1})$, and in order to upper bound the right-hand-side of~\eqref{eq:sum-convex-B}, we first provide an intermediate inequality:
\begin{eqnarray}
    \label{eq:lower-Hk-R-B}
    \lefteqn{(M-1)H^K(\bz)+\fprod{r^K,y^K-y}}\\
    &&\geq (M-1)\Big(\cL(\bx^K,y)-\cL(\bx,y)\Big)+\fprod{q^K, y^K-y}\nonumber\\
    &&\geq (M-1)\Big(\cL(\bx^K,y)-\cL(\bx,y)\Big)-M(\alpha^K+\beta^K)\bD_{\cY}(y,y^K)\nonumber\\
    &&\mbox{}-\frac{M}{2\alpha^K}\norm{\grad_y\Phi(\bx^{K},y^{K})-\grad_y\Phi(\bx^{K-1},y^{K})}_{\cY^*}^2
    -\frac{M}{2\beta^K}\norm{\grad_y\Phi(\bx^{K-1},y^{K})-\grad_y\Phi(\bx^{K-1},y^{K-1})}^2_{\cY^*},\nonumber
\end{eqnarray}
where the first inequality follows from \eqref{eq:lower-Hk-B} and $q^K=M\big(\grad_y\Phi(\bx^{K},y^{K})-\grad_y\Phi(\bx^{K-1},y^{K-1})\big)$ and for the second inequality we used \eqref{eq:inner-q-B} to lower bound $\fprod{q^K,y-y^K}$.}
Therefore, \eqref{eq:sum-convex-B}, \eqref{eq:H0-B} and \eqref{eq:lower-Hk-R-B} together imply that
\begin{align}\label{eq:sum-convex-q-B}
&\sum_{k=0}^{K-1}t^k\big(\cL(\bx^{k+1},y)-\cL(\bx,y^{k+1})\big)+\sum_{k=\rev{1}}^{K-1}t^k(M-1)(\theta^k-1)\left(\cL(\bx^k,y)-\cL(\bx,y)\right)\nonumber\\
& +(M-1)t^{K-1}\big(\cL(\bx^K,y)-\cL(\bx,y)\big)\leq 
M\bD_\cX^{\bT^0}(\bx,\bx^0)+\frac{M-1}{2}\norm{\bx-\bx^0}_{\fM}^2 \nonumber\\
&\quad +\big(\frac{1}{\sigma^0}+\rev{\theta^0}(M-1)L_{yy}\big)\bD_{\cY}(y,y^0)+(M-1)
\big(\cL(\bx^0,y)-\cL(\bx,y)\big)
\nonumber\\
&\quad -t^{K}\Big(M\bD_\cX^{\bT^K}(\bx,\bx^K)+\frac{M-1}{2}\norm{\bx-\bx^K}_{\fM}^2 +\big(\frac{1}{\sigma^{K}}-M\theta^K(\alpha^K+\beta^K)\big)\bD_{\cY}(y,y^K)\Big)\nonumber\\
&\quad +\sum_{k=0}^{K-1}t^k(M-1)(1-\theta^k)_+L_{yy}\bD_{\cY}(y,y^k)+\sum_{k=0}^{K-1}t^k (\eh{C^k_*}+\cE^k(\bx)),
\end{align}
\rev{where we used the fact that \eqref{eq:step-size-tau-B} and \eqref{eq:step-size-theta-B} hold for both \texttt{RB-APD} and \texttt{RB-APD-B}.}  

Note that the left hand side of \eqref{eq:sum-convex-q-B} can be lower bounded using Jensen's inequality twice:
\begin{align}\label{eq:lagrange-lower-B}
&\sum_{k=0}^{K-1}t^k\big(\cL(\bx^{k+1},y)-\cL(\bx,y^{k+1})\big)+\sum_{k=\rev{1}}^{K-1}t^k(M-1)(\theta^k-1)\big(\cL(\bx^k,y)-\cL(\bx,y)\big) \nonumber\\
& +(M-1)t^{K-1}\big(\cL(\bx^K,y)-\cL(\bx,y)\big)\geq \nonumber\\
&\quad \rev{(T^K+M-1)}\big(\cL(\bar{\bx}^K,y)-\cL(\bx,y)\big)+T^K\big(\cL(\bx,y)-\cL(\bx,\bar{y}^K)\big),
\end{align}
\rev{which follows from convexity of $\cL(\cdot,y)$ and $-\cL(\bx,\cdot)$ for every fixed $(\bx,y)$. In the above application of Jensen's inequality on $-\cL(\bx,\cdot)$, the convex combination coefficients are $\{t^k\}_{k=0}^{K-1}$, which satisfy $t^k=\sigma^k/\sigma^0\geq 0$ and $T^K=\sum_{k=0}^{K-1}t^k$, while in the application of Jensen's inequality on $\cL(\cdot,y)$, the convex combination coefficients are $\{Mt^k-(M-1)t^{k+1}\}_{k=0}^{K-1}$ and $Mt^{K-1}\geq 0$ --note that they sum to $T^K+M-1$, and $Mt^k-(M-1)t^{k+1}\geq 0$ follows from $\theta^{k+1}\geq \frac{M-1}{M}$ for all $k\geq 0$; indeed, we have already argued that when $\underbar{$\mu$}=0$, $\theta^k\geq 1$ for $k\geq 0$, and when $\underbar{$\mu$}>0$, \eqref{eq:SC-theta-bound} shows that $\theta^{k+1}\geq \frac{M-1}{M}$ for all $k\geq 0$.}

\rev{Note that for \texttt{RB-APD} since the parameter choice satisfy \eh{\eqref{eq:step-size-condition-pd-new1} and \eqref{eq:step-size-condition-pd-new}} with some $\delta\in[0,1)$,  $\alpha^{k+1}=c_\alpha/\sigma^k$ and $\alpha^{k+1}=c_\beta/\sigma^k$ for $k\geq 0$; therefore, Lemma~\ref{lem:stronger-step-condition-B} implies that \eqref{eq:step-size-Ck-B} holds with the same $\{
\alpha^k,\beta^k\}$ and $\delta$. Moreover, Lemma~\ref{lem:parameter} implies that \eqref{eq:step-size-Ck-B} always holds for \texttt{RB-APD-B}. Thus, 
we have that $\eh{C^k_*}\leq 0$ for $k\geq 0$;} hence, combining \eqref{eq:lagrange-lower-B} with \eqref{eq:sum-convex-q-B} leads to the desired result.
\end{proof}

As we discussed before, we provide a uniform  analysis of the rate results for \texttt{RB-APD} and \texttt{RB-APD-B}. 
Now we are ready to provide the rate result for {\bf (Part I)} and {\bf (Part II)} of Theorem \ref{thm:backtrack} and \ref{thm:main}.

\eh{Indeed, Lemma \ref{assum:step-size-B} implies that the step-size sequence $\{[\tau_i^k]_{i\in\cM},\sigma^k,\theta^k\}$ selected in \texttt{RB-APD-B} algorithm is well-defined satisfying \eqref{eq:step-size-tau-B} and \eqref{eq:step-size-theta-B} for $\{t^k\}$ such that $t^k=\sigma^k/\sigma^0$ for $k\geq 0$, and $C^k_*\leq 0$ for $k\geq 0$.} Next, we show that $\{[\tau_i^k]_{i\in\cM},\sigma^k,\theta^k\}$ selected in \texttt{RB-APD} algorithm satisfies Assumption~\ref{assum:step-size}.
Indeed, since $\theta^k=\sigma^{k-1}/\sigma^k$, for $\alpha^k=c_\alpha/\sigma^{k-1}$ and $\beta^k=c_\beta/\sigma^{k-1}$,  \eh{\eqref{eq:step-size-condition-pd-new1} and \eqref{eq:step-size-condition-pd-new}} can be written as
\begin{align}
\label{eq:step_cond_induction}
\frac{1-\delta}{\tau^k_{i_k}}\geq L_{x_{i_k}x_{i_k}}+\frac{L_{yx_{i_k}}^2}{c_\alpha}\sigma^k,\quad \rev{1-\delta -M(c_\alpha+c_\beta)}\geq \frac{ML_{yy}^2}{c_\beta}(\sigma^k)^2.
\end{align}
Clearly, the initial step-sizes selected as in Remark \ref{rem:step} implies that \eqref{eq:step_cond_induction} holds for $k=0$. When $\mu=0$, i.e., (Part I), we have $\gamma^k=\gamma^0$ and $\theta^k=1$ for $k\geq 0$; hence, $\tilde\tau^k=\tilde\tau^0$ and $\sigma^k=\sigma^0$ for $k\geq 0$. Thus, \eh{\eqref{eq:step-size-condition-pd-new1} and \eqref{eq:step-size-condition-pd-new}} hold for all $k\geq 0$ for $\{[\tau^k_i]_{i\in\cM}, \sigma^k,\theta^k\}$ produced by \texttt{RB-APD}. For the case $\mu>0$, i.e., (Part II), we will use induction to show that \eh{\eqref{eq:step_cond_induction} holds}. Recall that for this case, we assume $L_{yy}=0$; hence, the second condition in \eqref{eq:step_cond_induction} holds for any $\sigma^k$ as long as $1\geq \delta+\rev{M(c_\alpha+c_\beta)}$. Now suppose the first condition in \eqref{eq:step_cond_induction} holds for some $k\geq 0$, using $\sigma^{k+1}=\sigma^k\sqrt{\gamma^{k+1}/\gamma^k}$ and $\gamma^{k+1}/\gamma^k\geq 1$, we get
\rev{\begin{align*}
\frac{1-\delta}{\tau_{i}^{k+1}}
&=\frac{1-\delta}{M}\Big(\frac{1}{\tilde\tau^{k+1}}-\mu_i\Big)-(1-\delta)\mu_i=\frac{1-\delta}{M\tilde\tau^{k}}\sqrt{\frac{\gamma^{k+1}}{\gamma^k}}-(1-\delta)\Big(1-\frac{1}{M}\Big)\mu_i,\\
&=\frac{1-\delta}{\tilde\tau_i^k}\sqrt{\frac{\gamma^{k+1}}{\gamma^k}}+(1-\delta)\left(\sqrt{\frac{\gamma^{k+1}}{\gamma^k}}-1\right)\Big(1-\frac{1}{M}\Big)\mu_i,\\
&\geq\Big(L_{x_ix_i}+\frac{L^2_{yx_i}}{c_\alpha}\sigma^k\Big)\sqrt{\frac{\gamma^{k+1}}{\gamma^k}}\geq L_{x_ix_i}+\frac{L^2_{yx_i}}{c_\alpha}\sigma^{k+1},\quad\forall~i\in\cM. 
\end{align*}}%
This completes the induction. Moreover, Lemma~\ref{lem:stepsize-sequence} implies that $\{[\tau_i^k]_{i\in\cM},\sigma^k,\theta^k\}$ generated by \texttt{RB-APD} satisfies \eqref{eq:step-size-tau-B} and \eqref{eq:step-size-theta-B} for $\{t^k\}$ such that $t^k=\sigma^k/\sigma^0$ for $k\geq 0$. Thus, Assumption~\ref{assum:step-size} holds for $\{\alpha^k,\beta^k,t^k\}_{k\geq 0}$ as in the algorithm.

{\bf Proof of Theorem \ref{thm:backtrack} and \ref{thm:main} (Part I).} 
\rev{Suppose the Bregman distance generating functions are set according to \sa{Assumption}~\ref{rem:bregman}, and for the results in (Part I), we assume that $Z\triangleq\dom f\times \dom h$ is a compact set.}
In order to show the convergence rate \rev{for the expected gap}, we will use the result in Lemma \ref{lem:general-bound} by taking the supremum over 
\rev{$Z$}, and \rev{then computing the expectation of an appropriate upper bound on the supremum with respect to randomness in coordinate selection.} 

\rev{Now, recall the bound \eqref{eq:lagrange-general-bound} established in Lemma \ref{lem:general-bound}. Since $\underbar{$\mu$}=0$, we know that $\{\theta^k\}_{k\geq 0}$ generated by either \texttt{RB-APD} or \texttt{RB-APD-B} both satisfy $\theta^k\geq 1$ for all $k\geq 0$; thus, we have that $(1-\theta^k)_+L_{yy}\bD_{\cY}(y,y^k)= 0$ for all $k\geq 0$. Furthermore, since $\alpha^{k+1}=c_\alpha/\sigma^k$ and $\beta^{k+1}=c_\beta/\sigma^k$ for all $k\geq 0$ for some $c_\alpha,c_\beta\geq 0$ such that $M(c_\alpha+c_\beta)<1$, we know that $\frac{1}{\sigma^{K}}-M\theta^K(\alpha^K+\beta^K)\geq 0$. Thus, after dropping the nonpositive terms on the right-hand side of \eqref{eq:lagrange-general-bound},} taking supremum \rev{of the resulting bound} 
over $\bz=(\bx,y)\in\dom f\times\dom h$, we get
\rev{\begin{align}
    &\eh{\cG}(\bar\bz^K)\leq \frac{1}{T^K}\Big(\sup_{\bx\in \dom f}B_1^K(\bx)+\sup_{y\in \dom h}B_2^K(y)\Big).
    \label{eq:gap-bound}\\
    &B_1^K(\bx)\triangleq M\bD_\cX^{\bT^0+(1-\frac{1}{M})\fM}(\bx,\bx^0)+\sum_{k=0}^{K-1}t^k\cE^k(\bx),\\
    &B_2^K(y)\triangleq (\tfrac{1}{\sigma^0}+\theta^0(M-1)L_{yy})\bD_\cY(y,y^0)
    +(M-1)\big(\cL(\bx^0,y)-\cL(\bar\bx^K,y)\big).
\end{align}}%
\rev{Due to compactness of $Z=\dom f\times \dom h$ and continuity of $\cL(\cdot,\cdot)$, there exists $\bar{B}_2<+\infty$ such that $(M-1)\sup_{y\in\dom h}\big\{\cL(\bx^0,y)-\cL(\bar\bx^K,y)\big\}\leq \bar B_2$ for all $K\geq 1$. One can easily construct a crude bound: $\bar B_2=(M-1)\sup\{\cL(\bx^0,y)-\cL(\bx,y):\ (\bx,y)\in\dom f\times\dom h\}<+\infty$. That said, in many practical situations a much tighter bound can be obtained, e.g., in case $\cL(\cdot, y)$ is Lipschitz with a uniform constant $L_x>0$ for all $y\in\dom h$, then $\bar B_2=(M-1)L_xD_x$, where $D_x=\sup_{\bx,\bx'\in\dom f}\norm{\bx-\bx'}_\cX$ denotes the diameter of $\dom f$. For instance, let $f$ be the indicator function of a compact convex set $X$, then for every $y\in\dom h$, $\cL(\cdot, y)$ is indeed Lipschitz with a uniform constant $L_x=\sup\{\norm{\grad_\bx\Phi(\bx,y)}_{\cX^*}:\ \bx\in X,\ y\in\dom h\}<\infty$ due to continuity of $\grad_\bx\Phi$ and compactness of $X\times\dom h$. Thus,
\begin{align}
\label{eq:y-bound}
\exists \bar{B}_2<+\infty~:\quad \sup_{y\in\dom h}B_2^K(y)\leq \bar B_2+\Big(\tfrac{1}{\sigma^0}+\theta^0(M-1)L_{yy}\Big)\sup_{y\in\dom h}\bD_\cY(y,y^0).
\end{align}}%

\rev{Next, we claim that $\bE[\sup_{\bx\in\dom f}B_1^K(\bx)]$ can be bounded as follows:
\begin{align*}
    \hbox{\bf Claim 1: } &\exists~\bar{B}_1<+\infty\ \hbox{such that}\\
    &\bE\Big[\sup_{\bx\in\dom f}B_1^K(\bx)\Big]\nonumber\\
    &\leq  \sup_{\bx\in\dom f}M\bD_\cX^{(1+\frac{1}{M})\bT^0+\fM}(\bx,\bx^0)+\frac{(M-1)L_{\varphi_{\cX}}^2}{2}\bE\Big[\sum_{k=0}^{K-1}t^k\norm{\tilde\bx^{k+1}-\bx^k}^2_{\bT^k+\fM}\Big]\\
    &\leq  \sup_{\bx\in\dom f}M\bD_\cX^{(1+\frac{1}{M})\bT^0+\fM}(\bx,\bx^0)+\frac{(M-1)L_{\varphi_{\cX}}^2}{2}\cdot\bar{B}_1<+\infty,\quad \forall~K\geq 1.
\end{align*}}%

\rev{It follows from \eqref{eq:Ek-operation-2} and \eqref{eq:Ek-operation-3} that}
\begin{subequations}\label{eq:expectation-func}
\begin{align}
&\rev{\bE^k}\big[Mf(\bx^{k+1})-f(\btx^{k+1})-(M-1)f(\bx^k)
\big]=0,\\
&\rev{\bE^k}\big[\fprod{\grad_\bx\Phi(\bx^k,y^{k+1}),\btx^{k+1}-M\bx^{k+1}+(M-1)\bx^k}
\big]=0, \\
&\rev{\bE^k}\big[M\bD_\cX^{\bT^k}(\bx^{k+1},\bx^k)-\bD_\cX^{\bT^k}(\btx^{k+1},\bx^k)
\big]=0.
\label{eq:expectation-x}
\end{align}
\end{subequations}
For $k\geq 0$, we define 
\begin{align}
    \Xi^k(\bx)\triangleq M\bD_\cX^{\bT^k+\rev{\fM}}(\bx,\bx^{k+1})-(M-1)\bD_\cX^{\bT^k+\rev{\fM}}(\bx,\bx^k)-\bD_\cX^{\bT^k+\rev{\fM}}(\bx,\btx^{k+1});
\end{align} 
\rev{hence, \eqref{eq:expectation-func} implies that 
\begin{align}
\label{eq:E-Xi}
    \bE\left[\sup_{\bx\in\dom f}B_1^K(\bx)\right]=\bE\left[\sup_{\bx\in\dom f}\Big\{M\bD_\cX^{\bT^0+(1-\frac{1}{M})\fM}(\bx,\bx^0)+\sum_{k=0}^{K-1}t^k\Xi^k(\bx)\Big\}\right].
\end{align}
Furthermore, for all $k\geq 0$, we also define}%
\begin{align}
    \tilde\Gamma_1^{k+1}\triangleq \rev{(\bT^k+\fM)}(\grad\varphi_\cX(\tilde\bx^{k+1})-\grad\varphi_\cX(\bx^k)),\quad
    \Gamma_1^{k+1}\triangleq\rev{(\bT^k+\fM)}(\grad\varphi_\cX(\bx^{k+1})-\grad\varphi_\cX(\bx^k));
\end{align}
 hence, from the definition of Bregman \rev{distances}, we get
 \begin{align}
     &\bD^{\rev{\bT^k+\fM}}_\cX(\bx,\bx^k)-\bD^{\rev{\bT^k+\fM}}_\cX(\bx,\tilde\bx^{k+1})-\bD^{\rev{\bT^k+\fM}}_\cX(\tilde\bx^{k+1},\bx^k)=\fprod{\tilde\Gamma_1^{k+1},~\bx-\tilde\bx^{k+1}},\quad\forall~\bx\in\cX,\\
     &\bD^{\rev{\bT^k+\fM}}_\cX(\bx,\bx^k)-\bD^{\rev{\bT^k+\fM}}_\cX(\bx,\bx^{k+1})-\bD^{\rev{\bT^k+\fM}}_\cX(\bx^{k+1},\bx^k)=\fprod{\Gamma_1^{k+1},~\bx-\bx^{k+1}},\quad\forall~\bx\in\cX.
 \end{align}
 \rev{Therefore,}
\begin{align}\label{eq:bregman-inner}
\Xi^k(\bx)&= \bD^{\rev{\bT^k+\fM}}_\cX(\tilde\bx^{k+1},\bx^k)+\fprod{\tilde\Gamma_1^{k+1},\bx-\tilde\bx^{k+1}} -M\Big(\bD^{\rev{\bT^k+\fM}}_\cX(\bx^{k+1},\bx^k)+\fprod{\Gamma_1^{k+1},\bx-\bx^{k+1}}\Big)\nonumber\\
&=M\bD_\cX^{\rev{\bT^k+\fM}}(\bx^k,\bx^{k+1})-\bD_\cX^{\rev{\bT^k+\fM}}(\bx^k,\tilde\bx^{k+1})+\fprod{\tilde\Gamma_1^{k+1}-M\Gamma_1^{k+1},\bx-\bx^k},
\end{align}
where we used $\fprod{\tilde\Gamma_1^{k+1},\bx^k-\tilde\bx^{k+1}}=-\bD_\cX^{\rev{\bT^k+\fM}}(\bx^k,\tilde\bx^{k+1})-\bD_\cX^{\rev{\bT^k+\fM}}(\tilde\bx^{k+1},\bx^k)$ and $\fprod{\Gamma_1^{k+1},\bx^k-\bx^{k+1}}=-\bD_\cX^{\rev{\bT^k+\fM}}(\bx^k,\bx^{k+1})-\bD_\cX^{\rev{\bT^k+\fM}}(\bx^{k+1},\bx^k)$.
\rev{Next, we define an auxiliary sequence $\{\bv^k\}_{k\geq 0}$ by initializing $\bv^0=\bx^0\in\dom f$, and invoking Lemma \ref{lem:inner-w} with} 
$\bdelta^k=\tilde\Gamma_1^{k+1}-M\Gamma_1^{k+1}$ and \rev{$\cA=\bT^k+\fM$} 
for all $k\geq 0$.
Therefore, 
\eqref{eq:bregman-inner} implies%
\begin{align}\label{eq:bregman-upper-bound}
\Xi^k(\bx) &\leq M\bD_\cX^{\rev{\bT^k+\fM}}(\bx^k,\bx^{k+1})-\bD_\cX^{\rev{\bT^k+\fM}}(\bx^k,\tilde\bx^{k+1}) + \bD_\cX^{\rev{\bT^k+\fM}}(\bx,\bv^k)-\bD_\cX^{\rev{\bT^k+\fM}}(\bx,\bv^{k+1})\nonumber\\
&\quad \rev{+\fprod{\tilde\Gamma_1^{k+1}-M\Gamma_1^{k+1},\bv^k-\bx^k}}+\frac{1}{2}\norm{\tilde\Gamma^{k+1}-M\Gamma^{k+1}}^2_{*,(\rev{\bT^k+\fM})^{-1}}.
\end{align}
\rev{Thus, \eqref{eq:bregman-upper-bound} immediately implies that
\begin{align}
    \sum_{k=0}^{K-1}t^k\Xi^k(\bx)\leq &\bD_\cX^{\rev{\bT^0+\fM}}(\bx,\bx^0)+\sum_{k=0}^{K-1} t^k\Big(M\bD_\cX^{\rev{\bT^k+\fM}}(\bx^k,\bx^{k+1})-\bD_\cX^{\rev{\bT^k+\fM}}(\bx^k,\tilde\bx^{k+1}) \Big)\nonumber\\
    &\mbox{}+\sum_{k=0}^{K-1}t^k\Big(\fprod{\tilde\Gamma_1^{k+1}-M\Gamma_1^{k+1},\bv^k-\bx^k}+\frac{1}{2}\norm{\tilde\Gamma^{k+1}-M\Gamma^{k+1}}^2_{*,(\rev{\bT^k+\fM})^{-1}}\Big),\label{eq:sum_Xi}
\end{align}
which follows from $t^k (\bT^k+\fM)\succeq t^{k+1} (\bT^{k+1}+\fM)$ for $k\geq 0$ whenever $\underbar{$\mu$}=0$\footnote{\rev{For $i\in\cM$ and $k\geq 0$, $\frac{1}{\tau_i^k}+\mu_i=\frac{1}{M}\Big(\frac{1}{\tilde\tau^k}+\mu_i\Big)$; hence, $t^k(\frac{1}{\tau_i^k}+\mu_i)\geq t^{k+1}(\frac{1}{\tau_i^{k+1}}+\mu_i)$ is equivalent to $\frac{1}{\tilde\tau^k}+\mu_i\geq \frac{t^{k+1}}{t^k}\Big(\frac{1}{\tilde\tau^{k+1}}+\mu_i\Big)=\frac{\gamma^{k+1}}{\gamma^k}\Big(\frac{1}{\tilde\tau^{k}}+\mu_i\frac{\tilde\tau^{k+1}}{\tilde\tau^k}\Big)$, which clearly holds because $\underbar{$\mu$}=0$ implies $\gamma^{k+1}=\gamma^k$ and we also have $\tilde\tau^{k+1}\leq \tilde\tau^{k}$, for all $k\geq 0$.}}, and in the above inequality, we also used $t^0\bD_\cX^{\rev{\bT^0+\fM}}(\bx,\bv^0)=\bD_\cX^{\rev{\bT^0+\fM}}(\bx,\bx^0)$, and $t^{K-1}\bD_\cX^{\rev{\bT^{K-1}+\fM}}(\bx,\bv^{K})\geq 0$ for $\bx\in\cX$.}

\rev{Next, similar to \eqref{eq:expectation-func}, one can easily verify that} 
\begin{align}
\label{eq:2moments}
    \rev{\bE^k\big[M\Gamma_1^{k+1}\big]=\tilde\Gamma_1^{k+1},\quad \bE^k\big[\|M\Gamma_1^{k+1}\|_{*,(\rev{\bT^k+\fM})^{-1}}^2\big]=M\|\tilde\Gamma_1^{k+1}\|_{*,(\rev{\bT^k+\fM})^{-1}}^2,}
\end{align}
which implies that
\begin{align}\label{eq:Gammak}
\bE^k[\|\tilde\Gamma_1^{k+1}-M\Gamma_1^{k+1}\|_{*,(\rev{\bT^k+\fM})^{-1}}^2]&=(M-1)\norm{\grad\varphi_\cX(\btx^{k+1})-\grad\varphi_\cX(\bx^k)}^2_{*,\rev{(\bT^k+\fM)}}\nonumber\\
&\leq (M-1)L_{\varphi_\cX}^2\norm{\btx^{k+1}-\bx^k}_{\rev{\bT^k+\fM}}^2,
\end{align}
where \rev{in the equality we used \eqref{eq:2moments} and the identity $\bE[\norm{X-\bE[X]}_2^2]=\bE[\norm{X}_2^2]-\norm{\bE[X]}_2^2$ holding for any random variable $X$; and for the inequality above,} we used the Lipschitz continuity of $\grad\varphi_\cX(\cdot)$. \rev{Furthermore, for $k\geq 0$, one also has
\begin{subequations}
    \label{eq:zero-expectation}
    \begin{align}
    &\bE^k[M\bD_\cX^{\rev{\bT^k+\fM}}(\bx^k,\bx^{k+1})-\bD_\cX^{\rev{\bT^k+\fM}}(\bx^k,\tilde\bx^{k+1})]=0,\\
    &\bE^k[\fprod{\tilde\Gamma_1^{k+1}-M\Gamma_1^{k+1},\bv^k-\bx^k}]=0,
\end{align}
\end{subequations}
where the first one follows from the same arguments we used for showing \eqref{eq:expectation-x}, and the second one follows from \eqref{eq:2moments}.}

\rev{Finally, \eqref{eq:E-Xi}, \eqref{eq:sum_Xi}, \eqref{eq:Gammak} and \eqref{eq:zero-expectation} together with the tower property of expectation imply that
\begin{align*}
    \bE\left[\sup_{\bx\in\dom f}
    B_1^K(\bx)\right]\leq\sup_{\bx\in\dom f}M\bD_\cX^{(1+\frac{1}{M})\bT^0+\fM}(\bx,\bx^0)+\frac{(M-1)L^2_{\varphi_\cX}}{2}\bE\left[\sum_{k=0}^{K-1}t^k\norm{\btx^{k+1}-\bx^k}_{\rev{\bT^k+\fM}}^2\right].
\end{align*}
Note that \eqref{eq:tkTk_sum}, \eqref{eq:tau-i-bound}  and \eqref{eq:tk-upper} imply that $$\sum_{k=0}^{K}t^k\norm{\btx^{k+1}-\bx^k}_{\rev{\bT^k+\fM}}^2\to \sum_{k=0}^{+\infty}t^k\norm{\btx^{k+1}-\bx^k}_{\rev{\bT^k+\fM}}^2<+\infty\quad a.s.\quad K\to +\infty.$$
Since we assume $\dom f$ is compact, Lebesgue's dominated convergence theorem implies that
\begin{align}
\bar{B}_1\triangleq \frac{(M-1)L^2_{\varphi_\cX}}{2}\bE\left[\sum_{k=0}^{+\infty}t^k\norm{\btx^{k+1}-\bx^k}_{\rev{\bT^k+\fM}}^2\right]<+\infty.
\end{align}
Thus, 
the uniform bound in \eqref{eq:E-Xi} implies that
\begin{align}
   \bE\left[\sup_{\bx\in\dom f}B_1^K(\bx)\right]\leq \bar{B}_1+\sup_{\bx\in\dom f}M\bD_\cX^{(1+\frac{1}{M})\bT^0+\fM}(\bx,\bx^0), \quad \forall~K\geq 1. 
\end{align}}%
\rev{This completes the proof of {\bf Claim 1}. Therefore, 
the result in \eqref{eq:rate-final} can be deduced from \eqref{eq:gap-bound}, \eqref{eq:y-bound}, and {\bf Claim 1}.} Furthermore, since $\sigma^k=\gamma^0\tilde\tau^k$ for $k\geq 0$, we conclude that  $T^K=\sum_{k=0}^{K-1}\sigma^k/\sigma^0 \geq {\frac{\eta\Psi}{\tilde\tau^0} K}$.\qed

{\bf Proof of Theorem \ref{thm:backtrack} and \ref{thm:main} (Part II).} \rev{Let $(\bx^*,y^*)$ be a sadde point of $\cL$.} In strongly convex-concave setting, i.e., $\underbar{$\mu$}>0$, we assume that $L_{yy}=0$ and $\varphi_\cX(\cdot)=\frac{1}{2}\norm{\cdot}^2$. \rev{When $L_{yy}=0$, defining $0^2/0=0$, one-step result in Lemma~\ref{lem:one-step} continues to holds with $\beta^{k}=0$ for all $k\geq 0$. Thus, consider setting $\alpha^{k}=c_\alpha/\sigma^{k-1}$ for $k\geq 0$ for some $c_\alpha>0$ and $\delta\in[0,1)$ such that $M c_\alpha+\delta\leq 1$. Therefore, evaluating the result of Lemma \ref{lem:general-bound} given in \eqref{eq:lagrange-general-bound} at $\bx=\bx^*$, and substituting $L_{yy}=0$ and $\alpha^{k}=c_\alpha/\sigma^{k-1}$ for $k\geq 0$, we get}
\begin{eqnarray}\label{eq:lagrange-sc}
\lefteqn{T^K\big(\cL(\bar{\bx}^K,y)-\cL(\bx^*,\bar{y}^K)\big)+(M-1)\big(\cL(\bar\bx^K,y)-\cL(\bx^*,y)\big)}\\
&&\leq \tfrac{M}{2}\norm{\bx^*-\bx^0}^2_{\bT^0+(1-\frac{1}{M})\fM}+\tfrac{1}{\sigma^0}\bD_{\cY}(y,y^0)+(M-1)\Big(\cL(\bx^0,y)-\cL({\bx}^*,y)\Big)\nonumber\\
&&\mbox{} +\rev{\sum_{k=0}^{K-1}t^k\cE^k(\bx^*)} -t^{K}\left(\tfrac{M}{2}\norm{\bx^*-\bx^K}^2_{\bT^K+(1-\frac{1}{M})\fM}+\tfrac{1}{\sigma^{K}}\big(1-Mc_\alpha\big)\bD_{\cY}(y,y^K)\right).\nonumber
\end{eqnarray}
Recall that \eqref{eq:expectation-func} implies $\bE^k[\cE(\bx^*)]=0$. Furthermore, since $(\bx^*,y^*)$ is a saddle point, we have $\cL(\bx^*,y^*)\geq \cL(\bx^*,y)$ and $\cL(\bx,y^*)\geq \cL(\bx^*,y^*)$ for any $\bx\in\cX$ and $y\in\cY$. Therefore, when we substitute $y=y^*$ in \eqref{eq:lagrange-sc}, the left-hand side is non-negative, and we get the following inequality:
\begin{eqnarray}\label{eq:iterate-rate-sc}
\lefteqn{t^{K}\bE\left[\tfrac{M}{2}\norm{\bx^*-\bx^K}^2_{\bT^K+(1-\frac{1}{M})\fM}+\tfrac{1}{\sigma^{K}}\big(1-Mc_\alpha\big)\bD_{\cY}(y^*,y^K)\right]}\\
&&\leq \tfrac{M}{2}\norm{\bx^*-\bx^0}^2_{\bT^0+(1-\frac{1}{M})\fM}+\tfrac{1}{\sigma^0}\bD_{\cY}(y^*,y^0)+(M-1)\Big(\cL(\bx^0,y^*)-\cL({\bx}^*,y^*)\Big).\nonumber
\end{eqnarray}
According to \texttt{RB-APD} and \texttt{RB-APD-B}, we have $M\bT^k+(M-1)\fM=\frac{1}{\tilde\tau^k}\bI_m$ for $k\geq 0$; hence, 
\begin{align*}
    t^k(M\bT^k+(M-1)\fM)=\frac{\sigma^k}{\sigma^0}\frac{1}{\tilde\tau^k}\bI_m=\frac{\gamma^k}{\sigma^0}\bI_m,\quad\forall~k\geq 0,
\end{align*}
which follows from $\sigma^k=\gamma^k\tilde\tau^k$ for $k\geq 0$. Thus, \eqref{eq:iterate-rate-sc} leads to the desired result in \eqref{eq:xy-bound-II}. 

Next, we will show \rev{the convergence rate in terms of the primal objective function value of the ergodic primal iterate sequence. Let $y^*(\bx)=\argmax_{y\in\cY}\cL(\bx,y)$ be the unique maximizer for any given $\bx\in\dom f$. After dropping non-positive terms from the right-hand side of \eqref{eq:lagrange-sc}, substituting $y=y^*(\bar\bx^K)$ and taking the expectation of both sides, we get
\begin{eqnarray}\label{eq:lagrange-sc-func}
\lefteqn{T^K\bE\left[\cL(\bar{\bx}^K,y^*(\bar{\bx}^K))-\cL(\bx^*,\bar{y}^K)\right]}\\
&&\leq \tfrac{\gamma^0}{2\sigma^0}\norm{\bx^*-\bx^0}^2_{\cX}+\tfrac{1}{\sigma^0}\sup_{y\in\dom h}\bD_{\cY}(y,y^0)+(M-1)\bE\Big[\cL(\bx^0,y^*(\bar{\bx}^K))-\cL(\bar{\bx}^k,y^*(\bar{\bx}^K))\Big],\nonumber
\end{eqnarray}
where we used $(M\bT^0+(M-1)\fM)=\frac{\gamma^0}{\sigma^0}\bI_m$. Note that $\cL(\bx^0,y^*(\bar{\bx}^K))-\cL(\bar{\bx}^k,y^*(\bar{\bx}^K))\leq F(\bx^0)-F(\bar\bx^k)\leq F(\bx^0)-F(\bx^*)$ and $F(\bar\bx^K)-F(\bx^*)\leq\cL(\bar{\bx}^K,y^*(\bar{\bx}^K))-\cL(\bx^*,\bar{y}^K)$ w.p. 1. Therefore, the result in \eqref{eq:F-bound} can be concluded immediately.}  

\rev{Finally, $\cO(1/K^2)$ rate for both \eqref{eq:xy-bound-II} and \eqref{eq:F-bound} follows from Lemma \ref{lem:k2-rate-B}, which} implies that $\gamma^K=\sigma^K/\tilde\tau^K=\Omega(K^2)$ and $T^K=\sum_{k=1}^K\sigma^k/\sigma^0=\Omega(K^2)$.\qed
\end{document}